\documentclass[12pt,reqno]{amsart}

\usepackage[utf8]{inputenc}
\usepackage[T1]{fontenc}
\usepackage[expansion=false]{microtype}

\usepackage{mathtools}          
\usepackage{amssymb,amsthm}
\usepackage{bm}                 

\usepackage[a4paper,margin=1in]{geometry}
\usepackage{enumitem}
\setlist[itemize]{leftmargin=*,itemsep=2pt,topsep=2pt}

\usepackage[dvipsnames,svgnames]{xcolor}
\usepackage[colorlinks=true,breaklinks=true,
            linkcolor=Fuchsia,citecolor=ForestGreen,urlcolor=NavyBlue,
            pdfusetitle]{hyperref}
\usepackage[capitalize,nameinlink,noabbrev]{cleveref}

\theoremstyle{plain}
\newtheorem{theorem}{Theorem}[section]
\newtheorem{proposition}[theorem]{Proposition}
\newtheorem{lemma}[theorem]{Lemma}
\newtheorem{corollary}[theorem]{Corollary}
\newtheorem{theorema}{Theorem}

\theoremstyle{remark}
\newtheorem{remark}[theorem]{Remark}
\crefname{theorema}{Theorem}{Theorems}

\numberwithin{equation}{section}
\allowdisplaybreaks

\newcommand{\R}{\mathbb{R}}
\newcommand{\bS}{\mathbb{S}}
\newcommand{\bD}{\bm{D}}
\newcommand{\bU}{\bm{U}}
\newcommand{\bV}{\bm{V}}
\newcommand{\bE}{\bm{E}}
\newcommand{\bW}{\bm{W}}
\newcommand{\bc}{\bm{c}}
\DeclareMathOperator{\Tr}{Tr}
\DeclareMathOperator{\dist}{dist}
\DeclareMathOperator{\dv}{div}
\newcommand{\cM}{\mathcal{M}}
\newcommand{\cH}{\mathcal{H}}
\newcommand{\cF}{\mathcal{F}}
\newcommand{\cD}{\mathcal{D}}
\newcommand{\cB}{\mathcal{B}}
\newcommand{\cC}{\mathcal{C}}
\newcommand{\cT}{\mathcal{T}}
\newcommand{\cP}{\mathcal{P}}
\newcommand{\cE}{\mathcal{E}}
\newcommand{\cG}{\mathcal{G}}
\newcommand{\cA}{\mathcal{A}}

\begin{document}

\title[Optimal stability of the solenoidal HUP and second order CKN inequalities]{Optimal stability hierarchies of the Heisenberg Uncertainty Principle for solenoidal fields and of the second order Caffarelli--Kohn--Nirenberg inequality}

\author[A. X. Do]{Anh Xuan Do}
\address{Anh Xuan Do: Department of Mathematics, University of Connecticut, Storrs, CT
06269, USA}
\email{anh.do@uconn.edu}
\author[A. T. Duong]{Anh Tuan Duong}
\address{Anh Tuan Duong: Faculty of Mathematics and Informatics, Hanoi University of Science and Technology, Hanoi, Vietnam}
\email{tuan.duonganh@hust.edu.vn}
\author[N. Lam]{Nguyen Lam}
\address{Nguyen Lam: School of Science and the Environment, Grenfell Campus, Memorial
University of Newfoundland, Corner Brook, NL A2H5G4, Canada}
\email{nlam@mun.ca}
\author[G. Lu]{Guozhen Lu}
\address{Guozhen Lu: Department of Mathematics, University of Connecticut, Storrs, CT
06269, USA}
\email{guozhen.lu@uconn.edu}
\author[V. H. Nguyen]{Van Hoang Nguyen}
\address{Van Hoang Nguyen: Department of Mathematics, FPT University, Ha Noi, Vietnam}
\email{vanhoang0610@yahoo.com,~hoangnv47@fe.edu.vn}
\date{\today}
\subjclass[2020]{26D10, 35A23, 46E35}
\keywords{Caffarelli--Kohn--Nirenberg inequalities, Heisenberg uncertainty principle, solenoidal fields, sharp stability, weighted Gaussian Poincar\'e inequalities, Fourier transform}

\begin{abstract}In 2018, V. Maz'ya \cite{Maz18} proposed 75 open problems in analysis and PDEs. One of them is about the best constant for the Heisenberg Uncertainty Principle for solenoidal vector fields motivated
by questions in hydrodynamics. This was answered by   Cazacu, Flynn and Lam in dimension two and subsequently  by Hamamoto in all dimensions $N\ge3$ by establishing sharp inequalities with explicit constants. Nevertheless, the much harder stability problem remains open in all dimensions. 
The first main result in this paper is to establish a sharp stability hierarchy by proving that its deficit controls the distance to the set of extremals, with the sharp stability constant $\frac12(N-\sqrt{N^2-4N+12})$ for $N\ge4$ and $1$ for $N=3$, together with chains of remainder terms measuring the distance to explicit larger families of poloidal and toroidal fields. The second main result is the second order $L^2$-Caffarelli--Kohn--Nirenberg inequality with the weights $|x|^{-2a}$ and $|x|^{-2b}$ on the line $1+a+b=0$, for which we obtain the stability constant $\frac12\min\{4(1+a),\sqrt{(N+2a)^2+4N-4}-(N+2a)\}$. This CKN inequality is sharp whenever the second number is the smaller one. We develop a novel method: the fourth order one-dimensional problems attached to the spherical modes are transformed, by a Fourier--Hankel transform of real order, into first order inequalities whose sharp stability is that of weighted Gaussian Poincar\'e inequalities. The inverse transform produces the extremals in terms of Kummer's function. As a byproduct of our new approach, we also obtain substantially simpler proofs, with sharp remainder terms and equality cases, of Hamamoto's one-dimensional inequality and of his sharp uncertainty principle for solenoidal fields. 
\end{abstract}

\maketitle

\section{Introduction and statement of the main results}

\subsection{The Heisenberg Uncertainty Principle and its solenoidal improvement}
Heisenberg's Uncertainty Principle \cite{Hei27} states that the position and the momentum of a particle cannot be measured simultaneously with arbitrary precision. Its mathematical formulation, the Heisenberg--Pauli--Weyl inequality \cite{Wey31, FS97}, asserts that a function and its Fourier transform cannot both be sharply localized. In terms of vector fields on $\R^N$ it reads
\begin{equation}\label{HUP}
\int_{\R^N}|\nabla\bU|^2dx\int_{\R^N}|\bU|^2|x|^2dx\ \ge\ \frac{N^2}{4}\Big(\int_{\R^N}|\bU|^2dx\Big)^2 ,
\end{equation}
where the constant $\frac{N^2}{4}$ is sharp and attained exactly when the components of $\bU$ are proportional to a common Gaussian $e^{-c|x|^2}$; the vector-valued inequality is equivalent to the scalar one, by the Cauchy--Schwarz inequality applied componentwise (see \cite{CFL22}). We refer to \cite{FS97} for a survey of the uncertainty principle, and to \cite{LLLS25, LLR25} for its versions on half-spaces and orthants and with monomial weights.

A non-trivial problem occurs when the test fields in \eqref{HUP} are subject to a differential constraint, since the best constant may then increase. When $\bU$ is curl-free, i.e.\ $\bU=\nabla\phi$ for a scalar potential $\phi$, Cazacu, Flynn and Lam \cite{CFL22} showed that \eqref{HUP} holds with the sharp constant $\frac{(N+2)^2}{4}$, attained when $\phi$ is a Gaussian; this is at the same time a second order uncertainty principle for $\phi$. For solenoidal (namely divergence-free) fields the question is a special case of a problem raised by Maz'ya \cite[Section 3.9]{Maz18}: \emph{find the best value of the constant in \eqref{HUP} when $\bU$ is assumed to be solenoidal.} In dimension two, curl-free and solenoidal fields are isometrically isomorphic (rotate the field by a right angle), so that the result of \cite{CFL22} solves this problem for $N=2$. Solenoidal improvements of functional inequalities go back to Costin and Maz'ya \cite{CM08}, who established the sharp Hardy--Leray inequality
\[
\int_{\R^N}|\nabla\bU|^2dx\ \ge\ \Big(\frac{N-2}{2}\Big)^2\frac{(N+2)^2}{(N+2)^2-8}\int_{\R^N}\frac{|\bU|^2}{|x|^2}dx
\]
for axisymmetric solenoidal fields, improving the constant $(\frac{N-2}{2})^2$ of Leray \cite{Ler33} for unconstrained fields (and of Hardy \cite{Har20} in one dimension); this was refined in a series of papers by Hamamoto and Takahashi \cite{Ham19, Ham20, Ham21, HT20, HT21}, where the axisymmetry was removed, the Rellich--Leray inequality was treated, and the curl-free counterparts were obtained. Since Hardy's inequality can itself be regarded as an uncertainty principle \cite{BV97}, one may expect the same computability for the solenoidal HUP, and this is what Hamamoto proved in \cite{Ham23}. Let us recall the two tools which make the solenoidal problem tractable. The first one is the poloidal--toroidal decomposition: every solenoidal field decaying at infinity splits uniquely as $\bU=\bU_P+\bU_T$, where the toroidal part $\bU_T$ is tangent to the spheres $\{|x|=r\}$ and divergence-free, and the poloidal part $\bU_P$ is generated by a scalar potential through a second order differential operator $\bD$; the two parts, as well as their gradients, are orthogonal on every sphere. The second one is the expansion in spherical harmonics, which reduces each part to one-dimensional radial problems. For the toroidal part these are scalar problems of the first order CKN type, but for the poloidal part they are genuinely of fourth order, and their solution constitutes the core of \cite{Ham23}.

\begin{theorema}[{\cite{Ham23}}]\label{THam}
Let $N\ge3$. For all solenoidal fields $\bU$ with $\int|\nabla\bU|^2+\int|\bU|^2|x|^2+\int|\bU|^2<\infty$,
\begin{equation}\label{HUPsol}
\int_{\R^N}|\nabla\bU|^2dx\int_{\R^N}|\bU|^2|x|^2dx\ \ge\ C_N^2\Big(\int_{\R^N}|\bU|^2dx\Big)^2,\qquad
C_N=\frac{\sqrt{N^2-4N+12}+2}{2},
\end{equation}
and the constant $C_N$ is sharp and attained. More precisely, decomposing $\bU=\bU_P+\bU_T$ into its poloidal and toroidal parts,
\begin{equation}\label{CPCT}
C_N=\min\{C_{P,N},\,C_{T,N}\},\qquad C_{P,N}=\frac{\sqrt{N^2-4N+12}+2}{2},\qquad C_{T,N}=\frac{N+2}{2},
\end{equation}
where $C_{P,N}$ and $C_{T,N}$ are the sharp constants of \eqref{HUPsol} for poloidal and for toroidal fields, respectively. The extremals are poloidal fields of spherical degree one for $N\ge4$, and at $N=3$, where $C_{P,3}=C_{T,3}=\frac52$, additionally the toroidal fields $Axe^{-\lambda|x|^2}$ with $A$ antisymmetric and their sums with poloidal extremals at the same scale.
\end{theorema}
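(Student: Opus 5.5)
We follow the route the abstract announces: reduce to spherical modes, then settle each mode by a transform. First, using the poloidal--toroidal decomposition $\bU=\bU_P+\bU_T$ and the orthogonality of the two parts and of their gradients on every sphere, all three integrals $\int|\nabla\bU|^2$, $\int|\bU|^2|x|^2$, $\int|\bU|^2$ split as sums of a poloidal and a toroidal contribution. By scaling $x\mapsto\lambda x$, \eqref{HUPsol} is equivalent to the additive form
\[
\int|\nabla\bU|^2+\int|\bU|^2|x|^2\ \ge\ 2C\int|\bU|^2 .
\]
Since this form is additive over orthogonal pieces, the constant for the sum is $\min\{C_{P,N},C_{T,N}\}$, provided each piece is handled separately. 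We then expand the toroidal and the poloidal potentials in spherical harmonics of degree $k$. Each piece reduces to a family of one-dimensional radial inequalities indexed by $k$, and the sharp constant is the infimum over $k$.

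\emph{Toroidal part.} For degree $k\ge1$ the toroidal field is $f(r)\,x\wedge\nabla_\sigma Y_k$-type. After this substitution the energy becomes a first order weighted inequality
\[
\int (|f'|^2+c_k r^{-2}f^2+r^2f^2)\,r^{N-1}dr\ \ge\ 2C\int f^2r^{N-1}dr .
\]
This is a harmonic-oscillator ground state with angular momentum. The ground state energy is $N+2\ell$ for the effective degree $\ell$, and the minimum, at $k=1$, gives $2C_{T,N}=N+2$. Equality holds for $f=re^{-r^2/2}$, which yields the fields $Axe^{-\lambda|x|^2}$ with $A$ antisymmetric.

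\emph{Poloidal part.} This is the main obstacle. Writing $\bU_P=\bD(g(r)Y_k)$, the three integrals become quadratic forms in $g,g',g'',g'''$ with weights $r^{N-1\pm\cdot}$. The resulting one-dimensional inequality is of fourth order, namely Hamamoto's one-dimensional inequality. Our plan is to apply a Fourier--Hankel transform of real order $\nu=\nu(k,N)$ adapted to the operator $\bD$. The weight $|x|^2$ becomes a second derivative in frequency, while $|\nabla|^2$ becomes multiplication by $\rho^2$. The poloidal inequality then turns into a first order inequality for the transformed profile $h$ with Gaussian-type weights, i.e. a weighted Gaussian Poincaré inequality $\int |h'|^2 w\ge \mu\int h^2 w$. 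Its sharp constant comes from the lowest eigenvalue of a Laguerre-type operator, i.e. from the zeros of an explicit quadratic in the index. This computation should produce $2C_{P,N}=\sqrt{N^2-4N+12}+2$ at $k=1$ and strictly larger values for $k\ge2$; we will check monotonicity in $k$ directly from the explicit formula. Inverting the transform gives the extremal profile in terms of Kummer's function ${}_1F_1$, which proves attainment.

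Finally we compare the two constants. $C_{P,N}<C_{T,N}$ iff $\sqrt{N^2-4N+12}<N$, i.e. $N>3$, with equality at $N=3$ (both equal $\tfrac52$). Hence $C_N=C_{P,N}$. The extremals are degree-one poloidal fields for $N\ge4$. At $N=3$ both types of extremal occur, together with their sums. Any such sum must use the same Gaussian scale, because equality in the additive form forces each orthogonal piece to attain its own constant at the same $\lambda$.
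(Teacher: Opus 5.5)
Your architecture matches the paper's. You pass to the sum form, split over the orthogonal toroidal part and poloidal spherical blocks, treat the toroidal part as a harmonic oscillator with angular momentum at least one, handle each poloidal block by a Fourier--Hankel transform, and compare $C_{P,N}$ with $C_{T,N}$. For the toroidal part, the paper writes $\bU_T=\bV e^{-|x|^2/2}$ and uses the Ornstein--Uhlenbeck gap on zero-mean components, which is essentially your computation. Your same-scale argument for sums at $N=3$ is also right.

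Two toroidal points need fixing. The ansatz $f(r)\,x\wedge\nabla_\sigma Y_k$ only makes sense for $N=3$, since for $N\ge4$ toroidal fields are not generated by a single scalar potential. Expand instead the Cartesian components of $\bU_T$ in scalar spherical harmonics. You must then prove that these components have no degree-zero part. This follows from $\partial_j(x_iU_j)=U_i$ and the divergence theorem: $\int_{|x|<r}U_i\,dx=\int_{|x|=r}x_i\,\bU\cdot\sigma=0$ for every $r$.

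The genuine gap is the poloidal step. It is only announced, and the mechanism you describe for it is not the right one.

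\emph{Setting up the transform.} The block integrals involve only $g,g',g''$, not $g'''$. What fixes the transform is Hamamoto's computation: up to a common positive factor, they are exactly $\int|\Delta G|^2$, $\int|X|^2|\nabla G|^2-2(N+k-2)\int|G|^2$ and $\int|\nabla G|^2$ for the radial function $G(X)=g(|X|)$ on $\R^{N+2k}$, see \eqref{lift}. So the Hankel order is $\frac{N+2k}{2}-1$. After Plancherel the block deficit is
$\int|\xi|^4|H|^2+\int|\xi|^2|\nabla H|^2-2(N+k-2)\int|H|^2-2C\int|\xi|^2|H|^2$.
This is first order, but it still carries a negative zero-order term, so it is not yet of Gaussian--Poincar\'e type.

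\emph{The missing idea.} Remove that term by writing $H=|\xi|^{\alpha_k}K$, where $\alpha_k\in(-\frac{N+2k}{2},0)$ solves $\alpha_k(N+2k+\alpha_k)+2(N+k-2)=0$. This is presumably the quadratic you allude to. Then substitute $K=e^{-|\xi|^2/2}w$. The integration by parts produces the constant $N+2k+2+2\alpha_k=2C(N,k)=2+\sqrt{(N+2k)^2-8(N+k-2)}$. Up to a positive factor, the block deficit becomes
$\int|\nabla w|^2e^{-|\xi|^2}|\xi|^{2+2\alpha_k}+2(C(N,k)-C)\int|w|^2e^{-|\xi|^2}|\xi|^{2+2\alpha_k}$, as in \eqref{blockw}.

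\emph{Where the sharp constant comes from.} $2C_{P,N}$ is the ground-state level produced by this substitution at $k=1$. The only property of the Gaussian Dirichlet form you need is that it is nonnegative and vanishes exactly on constants. That gives the extremals $H=c\,e^{-|\xi|^2/2}|\xi|^{\alpha_k}$ and, after inversion, the Kummer profiles. It is not the sharp constant $\mu$ of a Poincar\'e inequality $\int|h'|^2w\ge\mu\int h^2w$ with equal weights, as you write. That constant is $0$ over all $h$, and on mean-zero $h$ it is the spectral gap $4$, independent of $N$ and $k$. The paper uses this gap for stability, not for $C_{P,N}$.

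With these steps supplied, together with the monotonicity $(N+2k)^2-8(N+k-2)=4k^2+4k(N-2)+(N-4)^2$, your outline becomes the paper's proof.
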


The constant $C_N$ is strictly larger than the unconstrained value $\frac N2$, and strictly smaller than the curl-free value $\frac{N+2}{2}$ for $N\ge4$ (while $C_3=\frac52$), in accordance with the fact that the solenoidal condition is one equation while the curl-free condition consists of $\frac{N(N-1)}{2}$ equations. The proof in \cite{Ham23} rests on the poloidal--toroidal decomposition of Backus \cite{Bac58}, generalized by Weck \cite{Wec97} to differential forms on $\R^N$, on the transformation $\phi\mapsto|x|^\lambda\phi$ of Tertikas and Zographopoulos \cite{TZ07} used in \cite{CFL22}, and, as its main difficulty, on the explicit solution of a one-dimensional minimization problem for a fourth order functional (see \eqref{Hmin} below), for which a shorter proof was later given in \cite{Ham24} and by Huang, Ozawa and Tong \cite{HOT}. To the best of our knowledge, the stability of \eqref{HUPsol} has not been studied so far; one of the reasons is that the extremals are not Gaussians but Kummer functions, so that the Gaussian Poincar\'e inequality, which is the engine of the stability results for \eqref{HUP}, is not directly available for the poloidal part.

\subsection{The second order Caffarelli--Kohn--Nirenberg inequality}
The second problem we consider belongs to the family of the $L^2$-Caffarelli--Kohn--Nirenberg (CKN) inequalities \cite{CKN},
\begin{equation}\label{CKNfirst}
\Big(\int_{\R^N}\frac{|\nabla u|^2}{|x|^{2b}}dx\Big)^{\frac12}\Big(\int_{\R^N}\frac{|u|^2}{|x|^{2a}}dx\Big)^{\frac12}\ \ge\ C(N,a,b)\int_{\R^N}\frac{|u|^2}{|x|^{a+b+1}}dx ,
\end{equation}
which contains the HUP ($a=-1$, $b=0$), the hydrogen uncertainty principle ($a=b=0$) and Hardy's inequality ($a=1$, $b=0$). The sharp constant $C(N,a,b)$ was computed by Costa \cite{Cos08} in a restricted range by the expanding-the-square method, and by Catrina and Costa \cite{CC09} in the full range; alternative direct proofs, together with the classification of all optimizers, were given in \cite{CFL21}. Second order versions, namely CKN inequalities for $\Delta u$ and for curl-free fields, were established by Cazacu, Flynn and Lam \cite{CFL22, CFL23}, and completed by Duong and Nguyen \cite{DN23, DN26}, who computed the sharp constants of the weighted $L^2$-CKN inequalities for curl-free fields and second order derivatives; the sharp stability of the weighted $L^2$-CKN inequalities for curl-free fields is studied in \cite{DDLLNcurl}.

In this paper we consider the second order $L^2$-CKN inequality on the scaling line $1+a+b=0$. Let $N\ge2$, $a>-1$ and $b=-1-a$. Then (see \cite{CFL23, DN26})
\begin{equation}\label{2ndCKN}
\int_{\R^N}\frac{|\Delta f|^2}{|x|^{2a}}dx\int_{\R^N}\frac{|\nabla f|^2}{|x|^{2b}}dx\ \ge\ \Big(\frac{N+4a+2}{2}\Big)^2\Big(\int_{\R^N}|\nabla f|^2dx\Big)^2
\end{equation}
for all $f\in C_0^\infty(\R^N)$, the constant is sharp, and equality holds exactly for $f=c\,\varphi_0(\lambda\,\cdot)$, where
\begin{equation}\label{phi0}
\varphi_0(x)=e^{-\frac{|x|^{2+2a}}{2+2a}},\qquad x\in\R^N .
\end{equation}
The case $a=0$ is the second order HUP $\int|\Delta f|^2\int|x|^2|\nabla f|^2\ge\frac{(N+2)^2}{4}(\int|\nabla f|^2)^2$ of \cite{CFL22}, which, for $\bU=\nabla f$, is the curl-free HUP mentioned above. It is worth noting that for $a\ne0$ the inequality \eqref{2ndCKN}, which involves $\Delta f$, is different from the CKN inequality for the curl-free field $\nabla f$, which involves the full Hessian and whose stability is studied in \cite{DDLLNcurl}; the two coincide exactly at $a=0$, and for radial $f$ one computes that $\int|\nabla^2f|^2|x|^{-2a}-\int|\Delta f|^2|x|^{-2a}=-2a(N-1)|\bS^{N-1}|\int_0^\infty|f'|^2r^{N-3-2a}dr$, so that, on radial functions, the Hessian integral is the smaller one for $a>0$ and the larger one for $a<0$. The reason why we single out the line $1+a+b=0$ is structural: on this line the spherical modes of \eqref{2ndCKN} reduce exactly to the one-dimensional problem of Hamamoto, whereas off the line (and for the Hessian version) an additional lower order term appears; see the identity \eqref{keyiden} below, where this term carries the factor $1+a+b$.

\subsection{Sharp stability}
In \cite{BL85}, Brezis and Lieb raised the question whether the deficit of the sharp Sobolev inequality of the first order controls the distance to the manifold of optimizers. This question, answered affirmatively by Bianchi and Egnell \cite{BE91}, gave rise to a broad research program on the stability of functional and geometric inequalities; see for instance \cite{BDNN20, BGKM25, Carlen, CF13, CFMP09, CLT-JFA, CLT24, CLT242, CLT243, CLTW, DEFFL, FN19, FZ22, LLR26, VHN16, VHN19}, a list which is far from exhaustive. Since the proof in \cite{BE91} is by contradiction, it gives no information on the stability constant, and the determination of sharp or explicit stability constants has become a central theme: for the Sobolev inequality the local constant is the spectral gap $\frac{4}{N+4}$ \cite{CFW13}, the global constant is strictly smaller and attained \cite{Kon23}, and explicit lower bounds with the optimal dimensional dependence were obtained in a pioneering work by Dolbeault, Esteban, Figalli, Frank and Loss  in \cite{DEFFL}, subsequently by Chen, Lu and Tang in \cite{CLT24, CLT242, CLT243, CLTW} for the Hardy--Littlewood--Sobolev, higher order and fractional Sobolev inequalities and on the Heisenberg group.

For the HUP, stability was first obtained by McCurdy and Venkatraman \cite{MV21} by concentration-compactness, without constants, then with explicit but non-optimal constants by Fathi \cite{Fat21}, and finally in sharp form in \cite{CFLL24}: the product deficit of \eqref{HUP} (for scalar $u$) is bounded below by the $L^2$-distance to the Gaussians with the sharp constant $1$. The approach of \cite{CFLL24}, based on an exact remainder identity and on the sharp Gaussian Poincar\'e inequality, was extended to the whole family \eqref{CKNfirst} in \cite{DLLN26}, where four of the authors computed the sharp stability constants for all admissible parameters through a family of weighted Poincar\'e inequalities with Gaussian-type measures (see also \cite{DFLL23} for the $L^p$ theory, \cite{LLLS26} for half-spaces and orthants, and \cite{LLR25, LLR26} for related developments). The stability of the second order HUP, i.e.\ of \eqref{2ndCKN} at $a=0$, was first obtained by Duong and Nguyen \cite{DN25}, and the sharp stability was established by Do, Lam and Lu \cite{DoLL26}, who treated both the second order and the curl-free field cases, see also \cite{HY25}; in \cite{DLLNfrac}, four of the authors gave a short proof of these results, and treated the fractional HUPs, through a Fourier dictionary between the second order and fractional inequalities and \eqref{CKNfirst}.

Let us make precise what is meant by sharp stability. For an inequality $\mathcal L(u)\ge C_*\,\mathcal R(u)$ with a set $\mathcal M$ of extremals, a stability estimate is a lower bound of the deficit $\mathcal L(u)-C_*\mathcal R(u)$ by $\kappa\,\dist(u,\mathcal M)^2$ for a suitable norm; the estimate is sharp if $\kappa$ is the largest possible constant, and the best one can hope for is $\kappa$ explicit and attained, or approached, by explicit test functions. Two methods have been used to obtain sharp constants for inequalities of CKN type. The first one, in \cite{CFLL24, DLLN26, DoLL26}, writes the deficit as an exact remainder identity (an integral of a square), then substitutes $u=\varphi_0v$ with $\varphi_0$ the extremal, and applies a Poincar\'e inequality for a Gaussian-type measure; it is also the method by which the sharp constant of \eqref{HUPsol} was reproved in \cite{Ham24, HOT}, the square being the one in \eqref{RemainderIdt} below (the original proof in \cite{Ham23} is by an expansion in Laguerre polynomials). The second one, in \cite{DLLNfrac}, is a Fourier dictionary translating the second order and fractional HUPs into first order CKN inequalities. The present paper combines the two: the exact square of \eqref{RemainderIdt} is transformed by a Fourier--Hankel transform into a first order square, and only then is the Gaussian Poincar\'e inequality applied.

Let us also mention that, for the inequalities with Gaussian extremals, the stability question is closely related to the spectral theory of the Ornstein--Uhlenbeck and Laguerre operators, whose eigenvalues are equally spaced; this is what makes it possible to obtain not only a first stability estimate but whole chains of remainder terms, each level of the expansion contributing one term. We refer to \cite{CFLL24, DoLL26} for stability estimates of the HUP and of its second order version obtained in this way, and to \cite{LL26} for complete hierarchies of stability estimates for the $L^2$-Poincar\'e inequalities on Euclidean balls and for the Gaussian Poincar\'e inequality.

The purpose of this paper is to establish the sharp stability, with explicit constants, of the solenoidal HUP \eqref{HUPsol} and of the second order CKN inequality \eqref{2ndCKN} on the whole line $1+a+b=0$, in the dilation invariant product form of Bianchi--Egnell (for \eqref{2ndCKN}, the sharpness of the product-form constant is established for $a\ge a_*(N)$, see Theorem \ref{thm:prod2nd}), together with chains of remainder terms of a new type. Along the way we obtain a new proof of Theorem \ref{THam}, i.e.\ of the answer to Maz'ya's problem, in which the poloidal blocks are treated by a Fourier--Hankel transform instead of the Laguerre expansions of \cite{Ham23}, and which produces the inequality directly with its sharp remainder terms. As in \cite{CFLL24, DLLN26}, the product form is reached in two steps: we first fix the scale and prove stability for the deficit in sum form, which is technically simpler and in fact yields more (sharp chains of remainder terms), and we then transfer the estimates to the product form by a dilation. Our method is different from the ones in the works quoted above, and we believe that it is of independent interest: the one-dimensional problems attached to the spherical modes are transformed by a Fourier--Hankel transform into first order CKN problems, for which the sharp stability of \cite{DLLN26} is available; as a byproduct we obtain a new proof, with a sharp remainder term, of the one-dimensional inequality of \cite{Ham23}.

\subsection{Main results}\label{sec:main}
Our first group of results concerns the solenoidal HUP \eqref{HUPsol}. The natural deficit of \eqref{HUPsol}, in the sense of Bianchi--Egnell, is the \emph{product deficit} (the index $1$ refers to the product form, the index $2$ below to the sum form)
\begin{equation}\label{proddef}
\delta_{1,N}(\bU):=\Big(\int_{\R^N}|\nabla\bU|^2dx\Big)^{\frac12}\Big(\int_{\R^N}|\bU|^2|x|^2dx\Big)^{\frac12}-C_N\int_{\R^N}|\bU|^2dx\ \ge0 ,
\end{equation}
which is invariant under the dilations $\bU\mapsto\lambda^{N/2}\bU(\lambda\,\cdot)$, $\lambda>0$, and vanishes exactly on the extremals of \eqref{HUPsol} at every scale; the stability question is whether $\delta_{1,N}(\bU)$ controls the $L^2$-distance from $\bU$ to the set of extremals. For the statements we need Hamamoto's extremal profiles and two finite dimensional families of toroidal fields. For $k\ge1$ let $\cH_k$ denote the space of spherical harmonics of degree $k$ on $\bS^{N-1}$, and put
\begin{equation}\label{fk}
f_k(r):=r^{k-1}e^{-\frac{r^2}{2}}\,{}_1F_1\Big(\frac{N+2k-\sqrt{(N+2k)^2-8(N+k-2)}}{4},\ \frac{N+2k}{2},\ \frac{r^2}{2}\Big),
\end{equation}
\begin{equation}\label{CNk}
C(N,k):=1+\frac{\sqrt{(N+2k)^2-8(N+k-2)}}{2},
\end{equation}
so that $C(N,1)=C_{P,N}$, and denote by $\bD$ the poloidal generator of \cite{Ham23}, recalled in Section \ref{sec:sol}, which maps a scalar function $f$ with vanishing spherical means to a poloidal field $\bD f$. The fields $\bD(f_k\phi)$, $\phi\in\cH_k$, are exactly the extremals of \eqref{HUPsol} restricted to poloidal fields of degree $k$ (with the constant $C(N,k)$ in place of $C_N$) at the scale carrying the Gaussian factor $e^{-|x|^2/2}$. Let $\mathcal A_N$ be the set of antisymmetric $N\times N$ matrices and let $\cM_N$ be the set of all $N$-tuples $\cB=(B^1,\dots,B^N)$ of symmetric $N\times N$ matrices such that
\begin{equation}\label{MN}
\Tr B^k=0,\qquad \sum_{i=1}^NB^i_{ik}=0,\qquad B^i_{jl}+B^j_{il}+B^l_{ij}=0\qquad\text{for all }i,j,k,l\in\{1,\dots,N\};
\end{equation}
we write $\cB x\cdot x:=(B^1x\cdot x,\dots,B^Nx\cdot x)$. We show in Lemma \ref{lem:MN} that $(Ax+\cB x\cdot x)e^{-|x|^2/2}$ is a toroidal field if and only if $A\in\mathcal A_N$ and $\cB\in\cM_N$; these are the toroidal fields whose profiles are polynomials of degree at most two. For a finite dimensional subspace $\cE$ of $L^2(\R^N;\R^N)$ we denote by
\[
\dist(\bU,\cE):=\inf_{\bE\in\cE}\Big(\int_{\R^N}|\bU-\bE|^2dx\Big)^{\frac12}=\|\bU-\Pi_{\cE}\bU\|_{L^2(\R^N)}
\]
the $L^2$-distance from $\bU$ to $\cE$, where $\Pi_\cE$ is the orthogonal projection onto $\cE$; if $\cE$ is a union of dilation orbits, as in \eqref{dil} below, the infimum is over all elements of $\cE$. We introduce the families
\begin{align*}
\cE_1&:=\{\bD(f_1\phi_1):\phi_1\in\cH_1\},&
\cE_2&:=\{\bD(f_1\phi_1)+\bD(f_2\phi_2):\phi_1\in\cH_1,\ \phi_2\in\cH_2\},\\
\cT_1&:=\{Axe^{-|x|^2/2}:A\in\mathcal A_N\},&
\cT_2&:=\{(Ax+\cB x\cdot x)e^{-|x|^2/2}:A\in\mathcal A_N,\ \cB\in\cM_N\},
\end{align*}
so that $\cE_1\subset\cE_2$ are poloidal and $\cT_1\subset\cT_2$ are toroidal, together with their dilation orbits
\begin{equation}\label{dil}
\cE^{\rm dil}:=\big\{\lambda^{N/2}\bE(\lambda\,\cdot):\bE\in\cE,\ \lambda>0\big\},
\end{equation}
which consist of the same fields with the Gaussian factor replaced by its dilates, normalized in $L^2$. By Theorem \ref{THam}, $\cE_1^{\rm dil}$ (for $N\ge4$) and $(\cE_1+\cT_1)^{\rm dil}$ (for $N=3$) are exactly the zero sets of $\delta_{1,N}$, i.e.\ the full sets of extremals of \eqref{HUPsol}. Finally we set
\begin{align*}
\Lambda_1&:=N-\sqrt{N^2-4N+12},&\Lambda_2&:=\sqrt{N^2+16}-N,\\
\Lambda_3&:=N+2-\sqrt{N^2+16},&\Lambda_4&:=\sqrt{N^2-4N+12}-N+2 ,
\end{align*}
which are positive and add up to $4$.

\begin{theorem}\label{thm:prodHUP4}
Let $N\ge4$. Then for all solenoidal fields $\bU$,
\begin{align}
\delta_{1,N}(\bU)&\ \ge\ \tfrac12\Big[\Lambda_1\dist(\bU,\cE_1^{\rm dil})^2+\Lambda_2\dist(\bU,(\cE_1+\cT_1)^{\rm dil})^2\notag\\
&\qquad\qquad+\Lambda_3\dist(\bU,(\cE_2+\cT_1)^{\rm dil})^2+\Lambda_4\dist(\bU,(\cE_2+\cT_2)^{\rm dil})^2\Big],\label{prod4}
\end{align}
and the constant $\frac12\Lambda_1=\frac12\big(N-\sqrt{N^2-4N+12}\big)$ is sharp. Moreover, the distances may be taken to the families dilated by the single balancing parameter $\lambda_*=\lambda_*(\bU)$ of Section \ref{sec:prod}, which is a stronger statement.
\end{theorem}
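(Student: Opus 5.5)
The plan is to prove first a \emph{sum form} of \eqref{prod4} at the fixed scale carrying $e^{-|x|^2/2}$, namely
\[
\delta_{2,N}(\bU):=\int_{\R^N}|\nabla\bU|^2dx+\int_{\R^N}|x|^2|\bU|^2dx-2C_N\int_{\R^N}|\bU|^2dx\ \ge\ \sum_{i=1}^4\Lambda_i\,\dist(\bU,\cF_i)^2 ,
\]
with $\cF_1=\cE_1$, $\cF_2=\cE_1+\cT_1$, $\cF_3=\cE_2+\cT_1$ and $\cF_4=\cE_2+\cT_2$ undilated. Afterwards we transfer it to the product form by a dilation.

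\textbf{Sum form as a spectral problem.} We split $\bU=\bU_P+\bU_T$ and expand each part in spherical harmonics. Since the parts, and their gradients, are orthogonal on every sphere, the quadratic form $Q(\bU)=\int|\nabla\bU|^2+\int|x|^2|\bU|^2$ decouples into one-dimensional blocks indexed by type (P or T), degree $k$, and the harmonic. For a toroidal block of degree $k$ the problem is a first order radial harmonic-oscillator problem. Its lowest energy is $N+2k$, attained by the profile $r^ke^{-r^2/2}$, and the higher radial levels are spaced by $4$ (Laguerre spectrum, or the Gaussian Poincar\'e inequality after the substitution $u=e^{-r^2/2}v$).

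For a poloidal block of degree $k$ we use the Fourier--Hankel transform to turn the fourth order functional (the square in \eqref{RemainderIdt}) into a first order weighted problem. That first order problem has sharp stability given by the weighted Gaussian Poincar\'e inequalities of \cite{DLLN26}. The outcome is that the ground energy is $2C(N,k)$, attained exactly by $\bD(f_k\phi)$, and that the next radial level lies at least $4$ higher.

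With $2C_N=2C(N,1)=2+\sqrt{N^2-4N+12}$, the lowest levels relative to $2C_N$ are:
\begin{itemize}
\item poloidal $k=1$: $0$;
\item toroidal $k=1$ (energy $N+2$): $\Lambda_1$;
\item poloidal $k=2$ (energy $2+\sqrt{N^2+16}$): $\Lambda_1+\Lambda_2$;
\item toroidal $k=2$ (energy $N+4$): $\Lambda_1+\Lambda_2+\Lambda_3$.
\end{itemize}
Every remaining mode has relative energy at least $4=\sum\Lambda_i$. For the toroidal blocks with $k\ge3$ this holds because $N+6-2C_N\ge4$. For the poloidal blocks with $k\ge3$ we check that $C(N,k)$ is increasing in $k$ with $2C(N,3)\ge 2C_N+4$. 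For the excited radial states it follows from the gap $4$.

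Write $\bU=\sum_j c_j\bW_j$ in this orthogonal eigen-decomposition. Then $\delta_{2,N}(\bU)\ge\sum_j(E_j-2C_N)|c_j|^2$. Since $\cF_1\subset\cF_2\subset\cF_3\subset\cF_4$ are exactly the successive eigenspaces, $\dist(\bU,\cF_i)^2$ is the mass above level $i$. Summing by parts (telescoping) gives exactly $\sum_i\Lambda_i\dist(\bU,\cF_i)^2$. One point needs care: the identity of $\cT_2$ with the toroidal degree $\le2$ ground states requires Lemma~\ref{lem:MN}.

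\textbf{Product form.} Let $A=\int|\nabla\bU|^2$ and $B=\int|x|^2|\bU|^2$. We choose $\lambda_*$ so that $\bU_{\lambda}=\lambda^{N/2}\bU(\lambda\cdot)$ satisfies $A_{\lambda_*}=B_{\lambda_*}=\sqrt{AB}$. Then $\delta_{1,N}(\bU)=\delta_{1,N}(\bU_{\lambda_*})=\tfrac12\delta_{2,N}(\bU_{\lambda_*})$. Applying the sum form to $\bU_{\lambda_*}$ and using that dilations are $L^2$ isometries, we get $\dist(\bU_{\lambda_*},\cF_i)=\dist(\bU,\cF_i^{\lambda_*^{-1}})\ge\dist(\bU,\cF_i^{\rm dil})$. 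This yields \eqref{prod4}, including the stronger single-$\lambda_*$ version.

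\textbf{Sharpness.} We take $\bU_\varepsilon=\bD(f_1\phi_1)+\varepsilon Axe^{-|x|^2/2}$, which is already balanced to first order. Then $\delta_{1,N}=\tfrac12\Lambda_1\varepsilon^2\|Axe^{-|x|^2/2}\|^2+o(\varepsilon^2)$, while $\dist(\bU_\varepsilon,\cE_1^{\rm dil})\sim\varepsilon\|Axe^{-|x|^2/2}\|$ and the other distances are $o(\varepsilon)$. Hence no constant larger than $\frac12\Lambda_1$ is possible.

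\textbf{Main obstacle.} The hard step is the poloidal block: showing, via the Fourier--Hankel transform, that the fourth order degree-$k$ form has ground state exactly $\bD(f_k\phi)$ and a full gap $4$ to the next level. This requires the sharp weighted Gaussian Poincar\'e constant for the transformed measure and an inverse transform that produces the Kummer profiles $f_k$.
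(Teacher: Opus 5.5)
Your proposal is correct and follows the paper's architecture. You prove the fixed-scale chain for $\delta_{2,N}$ block by block: poloidal blocks through the Fourier--Hankel reduction to a weighted Gaussian Poincar\'e inequality, toroidal blocks through the harmonic oscillator. The gap bookkeeping $0<\Lambda_1<\Lambda_1+\Lambda_2<\Lambda_1+\Lambda_2+\Lambda_3<4$ is the same as the paper's, and you then transfer to $\delta_{1,N}$ by the balancing dilation exactly as in Lemma \ref{lem:transfer}.

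There are two genuine differences.

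\textbf{Toroidal part.} The paper does not split by spherical degree. It writes $\bU=\bV e^{-|x|^2/2}$ and expands the Cartesian components of $\bV$ in Hermite polynomials. It must then prove separately, via $\int\bU\cdot\nabla\varphi\,e^{-|x|^2/2}dx=0$ for polynomial $\varphi$, that the optimal linear and quadratic coefficients lie in $\mathcal A_N$ and $\cM_N$. Your degree-$k$ splitting treats each toroidal block as having components in $\cH_k$, hence as the scalar mode-$k$ oscillator with levels $N+2k+4m$. This keeps every projection toroidal automatically and yields the full toroidal spectrum. What remains is to identify the ground states for $k=1,2$ with $\cT_1$ and $\cT_2\ominus\cT_1$, which is Lemma \ref{lem:MN} applied to homogeneous fields. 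The price is that you rely on the stability of toroidal fields under the spherical-harmonic expansion, which the paper only quotes from Hamamoto. The Cartesian route needs nothing beyond the Ornstein--Uhlenbeck spectrum and explicit Gaussian moments.

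\textbf{Sharpness.} The paper uses $Axe^{-|x|^2/2}$ alone and checks that it is balanced. Your field $\bD(f_1\phi_1)+\varepsilon Axe^{-|x|^2/2}$ in fact gives exact equality, not just an asymptotic one:
\begin{itemize}
\item $\bD(f_1\phi_1)$ is balanced, since it minimizes $\delta_{2,N}$ along its dilation orbit;
\item the field lies in $\cE_1+\cT_1$, so the last three distances vanish;
\item $\dist(\cdot,\cE_1^{\rm dil})=\varepsilon\|Axe^{-|x|^2/2}\|$ exactly, because toroidal fields are orthogonal to all dilated poloidal fields.
\end{itemize}
Your version also shows that $\frac12\Lambda_1$ cannot be improved even in a neighbourhood of the extremals.

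In both versions the balancing is dispensable for the upper bound: $\delta_{1,N}\le\frac12\delta_{2,N}$ always holds by the arithmetic--geometric mean inequality.
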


\begin{theorem}\label{thm:prodHUP3}
Let $N=3$. Then for all solenoidal fields $\bU$ on $\R^3$,
\begin{equation}\label{prod3}
\delta_{1,3}(\bU)\ \ge\ \dist(\bU,(\cE_1+\cT_1)^{\rm dil})^2+\dist(\bU,(\cE_2+\cT_2)^{\rm dil})^2 ,
\end{equation}
and the constant $1$ is sharp; the same stronger statement with a single balancing parameter holds.
\end{theorem}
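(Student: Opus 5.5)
The proof has two steps, as announced: first a sum-form stability at fixed scale, then the passage to product form by choosing the scale. At scale $1$ we write the sum deficit
\[
\delta_{2,3}(\bU):=\tfrac12\Big(\int|\nabla\bU|^2+\int|\bU|^2|x|^2\Big)-C_3\int|\bU|^2 .
\]
Using the poloidal--toroidal decomposition $\bU=\bU_P+\bU_T$, we split it into a poloidal and a toroidal part. The two parts and their gradients are orthogonal on every sphere, and the expansion in spherical harmonics separates the modes. This gives
\[
\delta_{2,3}(\bU)=\sum_k\delta^{P}_k+\sum_k\delta^{T}_k,
\]
and every block is nonnegative because $C_3=\frac52=\min_k C(3,k)=C_{T,3}$.

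For the toroidal blocks of degree $k$ we have a first order radial problem with Gaussian extremal. Substituting $u=e^{-r^2/2}v$ gives a weighted Gaussian (Laguerre) Poincaré inequality. Its equally spaced spectrum shows that the block deficit is at least the distance to the radial profiles $r^{k}e^{-r^2/2}$ of the lowest level, weighted by the spectral gaps. At $N=3$ the degree-one toroidal profiles give exactly $\cT_1$, and the next level (profiles of polynomial degree two) gives $\cT_2\setminus\cT_1$, identified through Lemma \ref{lem:MN}.

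For the poloidal blocks we use the paper's method. The fourth order radial problem is written as an exact square, via the remainder identity, and then mapped by the Fourier--Hankel transform to a first order CKN square. There the sharp Gaussian-Poincaré stability of \cite{DLLN26} applies, with gaps $C(3,k)-C_3$ and then the next eigenvalue. Transforming back, the extremals are $f_k$, and the block deficit controls the $L^2$ distance to $\bD(f_1\cH_1)$ at degree one and to $\bD(f_2\cH_2)$ at degree two. Since $\bD$ is not an $L^2$ isometry, the Hankel transform must be chosen so that the first order $L^2$ norm corresponds to $\|\bD(\cdot)\|_{L^2}$; this normalization is the key identity to verify.

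At $N=3$ the constants line up so that all relevant gaps equal $1$ or $2$: $\Lambda_1=0$, $\Lambda_4=0$, and $\Lambda_2=\Lambda_3=\sqrt{25}-3=2$. Summing the blocks and regrouping the orthogonal projections by telescoping gives
\[
\delta_{2,3}(\bU)\ge \tfrac12\Big[2\dist(\bU,\cE_1+\cT_1)^2+2\dist(\bU,\cE_2+\cT_2)^2\Big].
\]
The regrouping uses that degree-$\ge3$ modes and higher radial levels carry gap at least $2$ in each of the two sums, and this is where the main bookkeeping obstacle lies. One has to check that every component outside $\cE_2+\cT_2$ has total sum deficit at least twice its mass, and every component in $(\cE_2+\cT_2)\ominus(\cE_1+\cT_1)$ at least its mass. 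This means computing the spectral gaps of the poloidal degree-two block, $C(3,2)-\frac52=\frac12(\sqrt{41-24}\dots)$, and checking them against the stated regrouping. If a gap falls short, the plan is to absorb it using the second eigenvalue level of the transformed Poincaré inequality.

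For the product form, given $\bU$ we pick $\lambda_*$ with $\lambda_*^2\int|\bU|^2|x|^2=\int|\nabla\bU|^2/\lambda_*^{2}$ after rescaling, i.e.\ so that the two terms balance. Then $\delta_{1,3}(\bU)=\delta_{2,3}(\bU_{\lambda_*})$, and the distances are dilation invariant, which gives \eqref{prod3} with the single balancing parameter.

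For sharpness, take $\bU=\bE+\varepsilon\bW$ with $\bE\in\cE_1+\cT_1$ and $\bW$ the first eigenfunction orthogonal to $\cE_2+\cT_2$. Expanding to second order in $\varepsilon$, the ratio of the deficit to the right-hand side tends to $1$, because the balancing scale moves only at order $\varepsilon^2$.
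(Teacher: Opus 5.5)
Your architecture is the paper's. You prove the fixed-scale estimate (Theorem \ref{thm:HUP3}, which in your half-normalization reads $\delta\ge d_A^2+d_B^2$) block by block: poloidal--toroidal splitting, Fourier--Hankel reduction of each poloidal block to a weighted Gaussian Poincar\'e inequality, and a gap count. You then apply it to the balanced dilate, as in Lemma \ref{lem:transfer}. Your degree-by-degree toroidal reduction through vector spherical harmonics is a workable substitute at $N=3$ for the paper's Cartesian Hermite expansion combined with \eqref{torid}. The normalization you flag is Hamamoto's \eqref{Ubeta} with $\beta=0$: $\int|\bU_{kl}|^2$ is the Dirichlet energy of the lifted radial $G_{kl}$ on $\R^{N+2k}$, which is exactly the weighted norm in which the Gaussian Poincar\'e distance is measured. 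Two steps, however, are not established.

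\emph{The gap count.} This count is the whole content of the fixed-scale inequality, and you leave it open with wrong numbers. At $N=3$ one has $\Lambda_1=\Lambda_3=0$ and $\Lambda_2=\Lambda_4=2$, not $\Lambda_3=2$, $\Lambda_4=0$. Your values would give a chain ending at $\cE_2+\cT_1$, which is false: $\cB x\cdot x\,e^{-|x|^2/2}$ would be counted twice with gap only $1$. Also $C(3,2)=1+\frac12\sqrt{49-24}=\frac72$ and $C(3,3)=1+\frac12\sqrt{81-32}=\frac92$. In your normalization the facts needed are:
\begin{itemize}
\item $\bD(f_2\phi)$ has gap exactly $C(3,2)-C_3=1$, and $\cB x\cdot x\,e^{-|x|^2/2}$ has gap exactly $\frac12(N+4-2C_3)=1$;
\item every component in $(\cE_2+\cT_2)^\perp$ has gap at least $2$: each higher level inside a poloidal block adds $2$, the blocks $k\ge3$ have $C(3,k)-C_3\ge2$ (equality at $k=3$), and toroidal Hermite levels $j\ge3$ have gap $j-1\ge2$.
\end{itemize}
So the count closes, with equality in several places. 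Your fallback of absorbing a shortfall via the second eigenvalue level could not have worked. The field $\bD(f_2\phi)$ lies entirely in the lowest level of its block, and its deficit is exactly $(C(3,2)-C_3)\|\bU\|^2$; no higher level is available to absorb anything.

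\emph{Sharpness.} The lowest eigenspace of the deficit on $(\cE_2+\cT_2)^\perp$ contains two kinds of fields: the second level of the degree-one poloidal block, and the degree-one toroidal fields of Hermite level three. These are precisely the dilation generators $\frac{d}{d\lambda}\bE_\lambda|_{\lambda=1}$ of elements $\bE$ of $\cE_1$, resp.\ $\cT_1$, since they are $L^2$-orthogonal to $\bE$ and lie in the span of the first two levels. For such $\bW$ with $\bE$ in the same block, $\int\nabla\bE\cdot\nabla\bW\,dx\neq0$. Hence the balancing scale moves at order $\varepsilon$, not $\varepsilon^2$, and $\bE+\varepsilon\bW$ lies within $O(\varepsilon^2)$ of $\cE_1^{\rm dil}$. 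Both sides of \eqref{prod3} then lose their $\varepsilon^2$ terms, and your expansion proves nothing. This is the same obstruction the paper records after Corollary \ref{cor:single}.

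What is needed is a balanced test field orthogonal to the dilated families, not merely to the families at scale $1$. The paper takes $\bU=\bD(f_2\phi)$ with $\phi\in\cH_2$, or $\cB x\cdot x\,e^{-|x|^2/2}$. This field minimizes deficit over mass in a dilation-invariant block, so the two integrals balance and $\delta_{1,3}(\bU)=\|\bU\|^2$. It is orthogonal to every dilate of $\cE_1+\cT_1$ and lies in $\cE_2+\cT_2$, so \eqref{prod3} is an equality with only the first term active. If you want both terms active, take $\bW=\bD(f_3\phi_3)$ alone, with $\phi_3\in\cH_3$. Since $2(C(3,3)-C_3)=4$, it gives exact equality with both distances equal to $\|\bW\|^2$, and no expansion is needed.
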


The first terms of \eqref{prod4} and \eqref{prod3} are the sharp stability estimates of \eqref{HUPsol} in the sense of Bianchi--Egnell \cite{BE91}: the product deficit controls, with an explicit sharp constant, the $L^2$-distance to the full set of extremals. The remaining terms are remainder terms of a new type: they show that the deficit still controls the distance to larger families, with explicit constants; these families are built from the next poloidal extremal profile $f_2$ and from the toroidal fields with polynomial profiles of degree at most two, which correspond to the second level of the Hermite expansion. At $N=3$ the family $(\cE_1+\cT_1)^{\rm dil}$ consists of the superpositions of a poloidal and a toroidal extremal \emph{at the same scale}. Superpositions with two different scales are not extremals, and they are controlled by \eqref{prod3} as well, since the right-hand side does not vanish on them.

Theorems \ref{thm:prodHUP4} and \ref{thm:prodHUP3} are deduced, by a dilation, from stability estimates at a fixed scale, which we now state. They concern the deficit of \eqref{HUPsol} in \emph{sum form},
\begin{equation}\label{deficit}
\delta_{2,N}(\bU):=\int_{\R^N}|\nabla \bU|^2dx+\int_{\R^N}|\bU|^2|x|^2dx-2C_N\int_{\R^N}|\bU|^2dx\ \ge0 ,
\end{equation}
which is not dilation invariant and vanishes exactly on the extremals at one scale, the one carrying the Gaussian factor $e^{-|x|^2/2}$; one has $\delta_{2,N}\ge2\delta_{1,N}$, with equality when the two integrals are balanced, and this is how the product form is recovered. The sum form is technically simpler, since the scale is fixed once and for all, and it yields more: the whole chains below are sharp.

\begin{theorem}\label{thm:HUP4}
Let $N\ge4$. Then for all solenoidal fields $\bU$,
\begin{align}\label{stabHUP4}
\delta_{2,N}(\bU)\ \ge\ &\Lambda_1\dist(\bU,\cE_1)^2+\Lambda_2\dist(\bU,\cE_1+\cT_1)^2\notag\\
&+\Lambda_3\dist(\bU,\cE_2+\cT_1)^2+\Lambda_4\dist(\bU,\cE_2+\cT_2)^2 .
\end{align}
All four constants are sharp, in the following sense: for each $j\in\{1,2,3,4\}$ there is a solenoidal field for which equality holds in \eqref{stabHUP4} with the $j$-th term nonzero, so that no constant can be increased while the others are kept fixed.
\end{theorem}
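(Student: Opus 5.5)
The plan is to diagonalize the sum-form deficit $\delta_{2,N}$ completely, and then read \eqref{stabHUP4} as a telescoping rewriting of a spectral gap inequality. First, write $\bU=\bU_P+\bU_T$ and expand both parts in spherical harmonics. By the orthogonality of $\bU_P,\bU_T$ and of their gradients on every sphere, and by the orthogonality of the $\cH_k$, both $\int|\nabla\bU|^2$ and $\int|x|^2|\bU|^2$ split into a sum of independent blocks: a poloidal block of degree $k\ge1$ (the potential $g_k\phi_k$, $\phi_k\in\cH_k$) and a toroidal block for each degree. The $L^2$ norm splits the same way. So $\delta_{2,N}(\bU)=\sum_{\text{blocks}}\big(Q_{\text{block}}-2C_N\|\cdot\|_{L^2}^2\big)$, and it suffices to find the spectrum of the harmonic-oscillator-type form $Q=\int|\nabla\cdot|^2+\int|x|^2|\cdot|^2$ on each block, relative to $L^2$.

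\emph{Toroidal blocks.} Here the problem is first order, and the fields are Hermite functions (polynomial times $e^{-|x|^2/2}$) subject to the toroidal constraint. The eigenvalues are $N+2d$ for profiles homogeneous of degree $d\ge1$. The level $d=1$ is $\cT_1$, with value $N+2$, and the level $d=2$ is the $\cB$-part of $\cT_2$, with value $N+4$; this uses Lemma \ref{lem:MN}. All remaining toroidal modes have value $\ge N+6$.

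\emph{Poloidal blocks.} This is the fourth order radial problem, and it is the main step. I apply the Fourier--Hankel transform of real order announced in the introduction, which turns the degree-$k$ block into a first order CKN-type quadratic form. Then the sharp weighted Gaussian Poincar\'e inequality of \cite{DLLN26}, or equivalently the Laguerre spectrum with gap $4$, shows that the block has eigenvalues $2C(N,k)+4m$, $m\ge0$. The ground state, after transforming back, is $\bD(f_k\phi)$ with $f_k$ the Kummer profile \eqref{fk}. I expect this step to be the main obstacle: one must check that the transform intertwines exactly the remainder identity \eqref{RemainderIdt} with the first order square, and that the lowest level is nondegenerate, so that the projections onto $\cE_1,\cE_2$ are the right ones.

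Next I collect the levels in increasing order and compare with $2C_N=2C(N,1)$. The poloidal degree-one ground states give the value $2C_N$ (the space $\cE_1$). The toroidal level $\cT_1$ gives $N+2=2C_N+\Lambda_1$. The poloidal degree-two ground states give $2C(N,2)=2+\sqrt{N^2+16}=2C_N+\Lambda_1+\Lambda_2$. The toroidal level $\cT_2$ gives $N+4=2C_N+\Lambda_1+\Lambda_2+\Lambda_3$. Every other mode must have value at least $2C_N+4=2C_N+\sum_j\Lambda_j$. For the remaining modes this is checked as follows:
\begin{itemize}
\item poloidal excited states have value $\ge 2C(N,k)+4$;
\item toroidal modes with $d\ge3$ have value $N+6>2C_N+4$, because $\sqrt{N^2-4N+12}<N$ for $N\ge4$;
\item poloidal modes with $k\ge3$ have value $2C(N,k)\ge2C(N,3)\ge2C_N+4$, which reduces to the elementary inequality $\sqrt{N^2+4N+28}\ge4+\sqrt{N^2-4N+12}$ (obtained by squaring), together with monotonicity of $C(N,k)$ in $k$.
\end{itemize}

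Writing $P_0,\dots,P_4$ for the orthogonal projections onto these five groups of modes, we get $\delta_{2,N}(\bU)\ge\Lambda_1\|(I-P_0)\bU\|^2+\Lambda_2\|(I-P_0-P_1)\bU\|^2+\Lambda_3\|(I-P_0-P_1-P_2)\bU\|^2+\Lambda_4\|P_4\bU\|^2$. This is \eqref{stabHUP4}, because the nested subspaces are exactly $\cE_1\subset\cE_1+\cT_1\subset\cE_2+\cT_1\subset\cE_2+\cT_2$, and these are mutually orthogonal in $L^2$. The bound is an equality on the span of the first four groups. So for sharpness, take $\bU=\bE$ in the $j$-th level for $j=1,2,3$ ($\cT_1$, $\cE_2\ominus\cE_1$, $\cT_2\ominus\cT_1$); equality holds with the $j$-th term nonzero. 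For $j=4$, take an eigenfunction at the value $2C_N+4$, namely the first poloidal excited state of degree one.
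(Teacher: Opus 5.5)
Your proposal is correct and follows the paper's proof: the same block decomposition, the Hermite gaps for the toroidal part, the Fourier--Hankel reduction to Laguerre gaps of size $4$ for each poloidal block, the same gap comparison (your telescoping over spectral projections is the paper's block-by-block bookkeeping with (T) and (P$_k$)), and the same four sharpness test fields. One step needs to be made explicit. Your toroidal spectral picture requires that the Hermite-level components of a toroidal field are themselves toroidal; Lemma \ref{lem:MN} alone only says which level-two fields are toroidal. The paper obtains this from $\int\bU\cdot\nabla\varphi\,e^{-|x|^2/2}dx=0$ for polynomials $\varphi$, which places the level-one and level-two projections in $\cA_N$ and $\cM_N$.
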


\begin{theorem}\label{thm:HUP3}
Let $N=3$, so that $2C_3=5$. Then for all solenoidal fields $\bU$ on $\R^3$,
\begin{equation}\label{stabHUP3}
\delta_{2,3}(\bU)\ \ge\ 2\dist(\bU,\cE_1+\cT_1)^2+2\dist(\bU,\cE_2+\cT_2)^2 .
\end{equation}
Both constants are sharp in the same sense as in Theorem \ref{thm:HUP4}.
\end{theorem}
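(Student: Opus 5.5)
The plan is to split the deficit into a poloidal and a toroidal part and then read off the two sharp constants from spectral gaps of the relevant one-dimensional operators.

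\textbf{Splitting.} Write $\bU=\bU_P+\bU_T$. The two parts, and their gradients, are orthogonal on every sphere, so all three integrals in \eqref{deficit} split. Hence
\[
\delta_{2,3}(\bU)=\delta_{2,3}(\bU_P)+\delta_{2,3}(\bU_T).
\]
The families $\cE_1,\cE_2$ are poloidal and $\cT_1,\cT_2$ are toroidal, so their sums are orthogonal direct sums. Therefore
\[
\dist(\bU,\cE_j+\cT_j)^2=\dist(\bU_P,\cE_j)^2+\dist(\bU_T,\cT_j)^2,\qquad j=1,2.
\]
It then suffices to prove the two inequalities
\[
\delta_{2,3}(\bU_P)\ge2\dist(\bU_P,\cE_1)^2+2\dist(\bU_P,\cE_2)^2,\qquad
\delta_{2,3}(\bU_T)\ge2\dist(\bU_T,\cT_1)^2+2\dist(\bU_T,\cT_2)^2 .
\]

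\textbf{Toroidal part.} Here no constraint beyond tangency enters. I use the Hermite expansion, i.e. the eigenbasis of the harmonic oscillator $-\Delta+|x|^2$ acting componentwise. It gives $\int|\nabla\bU_T|^2+\int|x|^2|\bU_T|^2=\sum_{m\ge0}(3+2m)\|\bU_{T,m}\|^2$, where $\bU_{T,m}$ is the level-$m$ Hermite component.

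This operator commutes with rotations and preserves both tangency and the divergence-free condition. So each $\bU_{T,m}$ is again toroidal, and it equals a polynomial of degree $m$ times $e^{-|x|^2/2}$.

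The Hermite levels of a toroidal field are then identified as follows:
\begin{itemize}
\item Level $0$ (constants times the Gaussian) contains no toroidal field.
\item By Lemma \ref{lem:MN}, level $1$ is exactly $\cT_1$ and levels $\le2$ form exactly $\cT_2$.
\end{itemize}
Subtracting $5\|\bU_T\|^2$ gives
\[
\delta_{2,3}(\bU_T)=\sum_{m\ge2}(2m-2)\|\bU_{T,m}\|^2\ge2\|\bU_{T,2}\|^2+4\sum_{m\ge3}\|\bU_{T,m}\|^2 .
\]
The right-hand side equals $2\dist(\bU_T,\cT_1)^2+2\dist(\bU_T,\cT_2)^2$.

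\textbf{Poloidal part.} This is the main obstacle. I expand $\bU_P=\sum_k\bD(g_k\phi_k)$ with $\phi_k\in\cH_k$. Each block is a fourth order one-dimensional functional in $g_k$. By the Fourier--Hankel transform of real order, it turns into a first order weighted CKN functional. By the sharp weighted Gaussian Poincaré hierarchy of \cite{DLLN26}, its sum-form energy on the degree-$k$ block has spectrum
\[
2C(3,k)+4m,\qquad m\ge0,
\]
with ground state $f_k$. Since the spectrum is equally spaced, the weighted Gaussian Poincaré inequality gives gap $4$ above each ground state, with the extremal profiles pulled back through Kummer's function.

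The key numerical facts at $N=3$ are:
\begin{itemize}
\item $2C(3,1)=5$, $2C(3,2)=7$ and $2C(3,k)\ge9$ for $k\ge3$, since $C(3,k)$ increases in $k$.
\item The first excited level of degree one sits at $5+4=9$.
\end{itemize}
Hence, subtracting $5\|\bU_P\|^2$,
\[
\delta_{2,3}(\bU_P)\ge2\|\Pi_{\cE_2\ominus\cE_1}\bU_P\|^2+4\|\bU_P-\Pi_{\cE_2}\bU_P\|^2 .
\]
This is the poloidal estimate. The points to check carefully are two. First, the Hankel transform intertwines the quadratic forms exactly, including the $L^2$ norm $\|\bD(g\phi)\|^2$, which is itself a first-and-zeroth order form in $g$. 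Second, the $f_k$ are orthogonal to the excited modes in this norm.

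\textbf{Sharpness.} Three test fields give equality:
\begin{itemize}
\item $\bU$ any field in $\cT_2\ominus\cT_1$ (or $\bD(f_2\phi_2)$): then $\delta_{2,3}(\bU)=2\|\bU\|^2$, while $\dist(\bU,\cE_1+\cT_1)=\|\bU\|$ and $\dist(\bU,\cE_2+\cT_2)=0$. This gives equality with the first term nonzero.
\item $\bU$ a nonzero toroidal Hermite level-3 field, e.g. $Ax|x|^2$-type corrections times $e^{-|x|^2/2}$ made orthogonal to lower levels: both distances equal $\|\bU\|$ and $\delta_{2,3}(\bU)=4\|\bU\|^2$. This gives equality with the second term nonzero.
\end{itemize}
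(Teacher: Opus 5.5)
Your proof is correct. Its poloidal half is the paper's argument: Hamamoto's block representation, the Fourier--Hankel reduction to the weighted Gaussian Poincar\'e inequality with spectrum $2C(3,k)+4m$, and the fact that $\|\bU_{kl}\|^2$ is the weighted $L^2$ norm of $w_{kl}$ (second identity in \eqref{lift} plus Plancherel), so the Laguerre levels are $L^2$-orthogonal. The paper only phrases this as the bookkeeping with $(\mathrm{T})$ and $(\mathrm{P}_k)$ rather than as one spectral statement.

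The toroidal half follows a genuinely different route.
\begin{itemize}
\item The paper writes $\bU_T=\bV e^{-|x|^2/2}$ and applies the Ornstein--Uhlenbeck Poincar\'e inequality with two remainders to each component, the zero mean coming from toroidality. This gives infima over all matrices $A$ and all symmetric tuples $\cB$. It then uses \eqref{torid} and explicit Gaussian moments to show that the minimizers lie in $\cA_N$ and $\cM_N$.
\item You show instead that every Hermite level of a toroidal field is toroidal. With Lemma \ref{lem:MN} this gives the exact identity $\delta_{2,3}(\bU_T)=\sum_{m\ge2}(2m-2)\|\bU_{T,m}\|^2$. The estimate, and in fact the whole toroidal chain, follow at once.
\item In exchange, the paper's route produces the minimizers explicitly as moments.
\end{itemize}

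The one step you should state precisely is this invariance. Rotation invariance is beside the point. Also, invariance of a closed subspace under an unbounded operator does not by itself pass to its spectral projections. What works is the following. With $H=-\Delta+|x|^2$ acting componentwise,
$x\cdot H\bU=H(x\cdot\bU)+2\dv\bU$ and $\dv(H\bU)=H(\dv\bU)+2x\cdot\bU$.
Hence $S_\pm\bU:=x\cdot\bU\pm\dv\bU$ satisfies $S_\pm H=(H\pm2)S_\pm$. So $S_\pm$ maps the $m$-th level into the single scalar eigenspace with eigenvalue $3+2m\mp2$, and $S_\pm\bU_T=0$ forces $S_\pm\bU_{T,m}=0$ for every $m$.

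Your sharpness example for the second constant also differs from the paper's, which uses the second Laguerre level of the degree-one poloidal block. Yours is valid. The field $\bU=Ax\bigl(|x|^2-\tfrac52\bigr)e^{-|x|^2/2}$ with $A\in\cA_3\setminus\{0\}$ is toroidal and lies in Hermite level $3$. It has $\delta_{2,3}(\bU)=(9-5)\|\bU\|^2$ and is orthogonal to $\cE_2+\cT_2$.
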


Since the families in \eqref{stabHUP4} and \eqref{stabHUP3} are nested, the distances decrease along the chains, and the sum of the constants is $4$ in both cases. Discarding the smaller distances we obtain the following estimate with a single distance, to the largest family.

\begin{corollary}\label{cor:single}
Let $N\ge3$. For all solenoidal fields $\bU$,
\begin{equation}\label{single}
\delta_{2,N}(\bU)\ \ge\ 4\,\dist(\bU,\cE_2+\cT_2)^2 ,
\end{equation}
and the constant $4$ is sharp. Consequently,
\begin{equation}\label{singleprod}
\delta_{1,N}(\bU)\ \ge\ 2\,\dist\big(\bU,(\cE_2+\cT_2)^{\rm dil}\big)^2 ,
\end{equation}
and more precisely $\delta_{1,N}(\bU)\ge2\,\dist(\bU,(\cE_2+\cT_2)_{1/\lambda_*})^2$ with the balancing parameter $\lambda_*=\lambda_*(\bU)$ of Section \ref{sec:prod}.
\end{corollary}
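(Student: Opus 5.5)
The plan is to derive \eqref{single} from Theorems \ref{thm:HUP4} and \ref{thm:HUP3} by monotonicity of the distances, and then to obtain \eqref{singleprod} by the dilation argument of Section \ref{sec:prod}. For $N\ge4$ the families are nested,
\[
\cE_1\subset\cE_1+\cT_1\subset\cE_2+\cT_1\subset\cE_2+\cT_2 .
\]
Hence each distance in \eqref{stabHUP4} is at least $\dist(\bU,\cE_2+\cT_2)$. Since all $\Lambda_j>0$ and $\Lambda_1+\Lambda_2+\Lambda_3+\Lambda_4=4$, the right-hand side of \eqref{stabHUP4} is bounded below by $4\,\dist(\bU,\cE_2+\cT_2)^2$. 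For $N=3$ the same reasoning applies to \eqref{stabHUP3}, using $\cE_1+\cT_1\subset\cE_2+\cT_2$ and $2+2=4$.

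For the sharpness of $4$ I would use the sharpness statement of Theorem \ref{thm:HUP4} (respectively Theorem \ref{thm:HUP3}) for $j=4$ (respectively the second term). That statement provides a solenoidal field $\bU$ for which \eqref{stabHUP4} is an equality with the last term nonzero. I would like this field to have $\bU-\Pi_{\cE_2+\cT_2}\bU$ orthogonal to all the smaller families as well, so that all four distances coincide. The natural candidate is a field lying in the next level of the Hermite/Laguerre expansion, for instance a toroidal field of the form $P(x)e^{-|x|^2/2}$ with $P$ homogeneous cubic and orthogonal to $\cT_2$. Such a field is orthogonal to $\cE_2$ by the poloidal--toroidal orthogonality, and to $\cT_2$ by the degree and parity of the harmonic components. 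For it, $\dist(\bU,\cdot)=\|\bU\|_{L^2}$ for every family in the chain, and one computes $\delta_{2,N}(\bU)=(2\cdot\text{eigenvalue}-2C_N)\|\bU\|^2$. In the toroidal (Hermite) picture the eigenvalue gaps are integers, so the level just above $\cT_2$ yields a deficit of exactly $4\|\bU\|^2$, provided the relevant toroidal level energy is $N+6$ against $2C_N$ and the constants add to $4$ as stated. This is where the sharpness claim really rests. Whatever extremal field the proof of Theorem \ref{thm:HUP4} supplies for $j=4$, I would check that its projections onto the smaller families vanish, which forces equality of all distances and therefore equality in \eqref{single}.

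For the product form, I would apply \eqref{single} to the rescaled field $\bU_\lambda=\lambda^{N/2}\bU(\lambda\,\cdot)$ with $\lambda=\lambda_*$ chosen so that $\int|\nabla\bU_\lambda|^2=\int|\bU_\lambda|^2|x|^2$. At this balancing scale, AM--GM is an equality, so $\delta_{2,N}(\bU_{\lambda_*})=2\delta_{1,N}(\bU_{\lambda_*})=2\delta_{1,N}(\bU)$ by dilation invariance of $\delta_{1,N}$. Since the dilation is an $L^2$ isometry, $\dist(\bU_{\lambda_*},\cE_2+\cT_2)=\dist(\bU,(\cE_2+\cT_2)_{1/\lambda_*})$. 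This gives the refined statement, and taking the infimum over scales gives \eqref{singleprod}.

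The main obstacle is the sharpness of the constant $4$. It requires identifying an explicit field that makes every inequality in the chain an equality simultaneously. I expect this to follow from the equality case constructed for $j=4$ in the proof of Theorem \ref{thm:HUP4}, together with the orthogonality of the distinct spherical-harmonic and Laguerre levels.
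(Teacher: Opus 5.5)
Your derivation of \eqref{single} (nested families, all $\Lambda_j>0$, $\sum_j\Lambda_j=4$, and $2+2=4$ at $N=3$) and your transfer to \eqref{singleprod} at the balancing scale $\lambda_*$ are exactly the paper's argument. The gap is in the sharpness of $4$, at the point you yourself identify as decisive.

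\textbf{Why the toroidal candidate fails.} For a toroidal $\bU=\bV e^{-|x|^2/2}$ with $\bV$ in the Hermite level $j$, identity \eqref{torV} gives
$\delta_{2,N}(\bU)=(N+2j-2C_N)\|\bU\|^2=(\Lambda_1+2j-2)\|\bU\|^2$.
Your cubic candidate ($j=3$) therefore has deficit $(4+\Lambda_1)\|\bU\|^2$. Since $\Lambda_1=N+2-2C_N>0$ for every $N\ge4$, this is strictly larger than $4\|\bU\|^2$. The toroidal block sits above the optimal block by $\Lambda_1$, so its integer Hermite gaps never produce exactly $4$ when $N\ge4$.

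Your candidate does work at $N=3$, where $\Lambda_1=0$, provided $\bV$ is a pure level-$3$ field. That purity comes from Hermite-level orthogonality, not from parity: linear and cubic profiles are both odd. For instance, $|x|^2Ax\,e^{-|x|^2/2}$ with $A\in\cA_N$ is a toroidal homogeneous cubic field with a nonzero component along $Axe^{-|x|^2/2}$.

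\textbf{The correct test field.} The $4$ is the gap inside the optimal block, namely the degree-one poloidal block. The test field is that block's second Laguerre level, $\bU=\bD(f_1^{(1)}\phi_1)$. It corresponds to $w_{11}=|\xi|^2-c_0$, with $c_0$ chosen so that $w_{11}$ is orthogonal to constants in $L^2(e^{-|\xi|^2}|\xi|^{2+2\alpha_1}d\xi)$.
\begin{itemize}
\item By \eqref{blockw} with $C(N,1)=C_N$ and the equality case of Corollary \ref{cor:GP}(i), $\delta_{2,N}(\bU)=4\|\bU\|^2$.
\item Within a block, $\int|\bU_{1l}|^2dx$ equals, up to the factor $\kappa_1(2\pi)^{-N-2}$, the norm of $w_{1l}$ in that weighted Gaussian space. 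Hence $\bU$ is $L^2$-orthogonal to $\cE_1$, which corresponds to constant $w$.
\item $\bU$ is also orthogonal to the degree-two poloidal fields and to all toroidal fields.
\end{itemize}
So $\dist(\bU,\cE_2+\cT_2)=\|\bU\|$, and equality holds in \eqref{single} for every $N\ge3$.

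This is the $j=4$ field in the proof of Theorem \ref{thm:HUP4}, so your fallback would land on the right object. As written, however, your argument names the wrong field and rests on a computation that fails for $N\ge4$.
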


The estimate \eqref{single} says that the sum-form deficit controls, with the constant $4$, the $L^2$-distance to the explicit finite dimensional family $\cE_2+\cT_2$, of dimension $N+\dim\cH_2+\dim\cA_N+\dim\cM_N$; the constant $4$ is the gap inside the optimal block, and it is attained by the second level of the first poloidal block, which is orthogonal to $\cE_2+\cT_2$ and has deficit exactly $4$ times its squared norm. For $N\ge4$, \eqref{single} follows from \eqref{stabHUP4} since $\dist(\bU,\cF)\ge\dist(\bU,\cE_2+\cT_2)$ for each of the four families $\cF$ and $\Lambda_1+\Lambda_2+\Lambda_3+\Lambda_4=4$; for $N=3$ it follows from \eqref{stabHUP3} in the same way, with $2+2=4$. The product form \eqref{singleprod} follows from \eqref{single} by the dilation argument of Section \ref{sec:prod}, with half the constant; we do not know whether the constant $2$ is sharp there, since the test field realizing the constant $4$ in \eqref{single} is not orthogonal to the dilates of $\cE_2+\cT_2$.

In Theorems \ref{thm:HUP4} and \ref{thm:HUP3} the family $\cE_1$ (for $N\ge4$), respectively $\cE_1+\cT_1$ (for $N=3$), is exactly the set of extremals of \eqref{HUPsol} at the scale fixed by the deficit \eqref{deficit}. It is worth noting that the four constants of \eqref{stabHUP4} add up to $4$, which is the gap inside the optimal block, and that the whole chain is sharp. The toroidal part of Theorems \ref{thm:HUP4} and \ref{thm:HUP3} rests on the following stability estimates, which we state separately since they are of independent interest. Recall that $C_{T,N}=\frac{N+2}{2}$.

\begin{theorem}\label{thm:tor}
Let $N\ge3$. Then for all toroidal fields $\bU$,
\begin{align}\label{stabtor}
\int_{\R^N}|\nabla \bU|^2dx+\int_{\R^N}|\bU|^2|x|^2dx&-(N+2)\int_{\R^N}|\bU|^2dx\notag\\
\ \ge\ &2\inf_{A\in\mathcal A_N}\int_{\R^N}\big|\bU-Axe^{-\frac{|x|^2}{2}}\big|^2dx\notag\\
&+2\inf_{\substack{A\in\mathcal A_N\\ \cB\in\cM_N}}\int_{\R^N}\big|\bU-(Ax+\cB x\cdot x)e^{-\frac{|x|^2}{2}}\big|^2dx .
\end{align}
The constant $2$ in the first term is sharp.
\end{theorem}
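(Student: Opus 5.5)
Our approach is to use the harmonic oscillator structure directly. The point is that the toroidal constraint (tangential and divergence-free) is preserved by the Hermite decomposition of each component. Write $H=-\Delta+|x|^2$ acting componentwise on $L^2(\R^N;\R^N)$. Its eigenvalues are $N+2m$, $m\ge0$, and the eigenspace $\cP_m e^{-|x|^2/2}$ consists of fields whose components are Hermite functions of total degree $m$, i.e.\ polynomials of degree $m$ times the Gaussian modulo lower-degree terms. The left-hand side of \eqref{stabtor} is $\langle (H-(N+2))\bU,\bU\rangle$.

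The first step is to show that the toroidal subspace commutes with $H$. Let $P_m$ be the spectral projections of $H$. We claim that if $\bU$ is toroidal then so is each $P_m\bU$. To see this, note that the conditions $x\cdot\bU=0$ and $\dv\bU=0$ are both preserved under the action of the rotation group, and $H$ commutes with rotations. A cleaner route uses the fact that toroidal fields in degree-$k$ spherical modes are $\bU=g(r)\,\bm{Y}(\omega)$, with $\bm{Y}$ a tangential divergence-free vector spherical harmonic; this is Hamamoto's decomposition, recalled in Section \ref{sec:sol}. On such fields $|\nabla\bU|^2$ reduces to the one-dimensional form $\int (|g'|^2+\mu_k g^2/r^2+r^2g^2)r^{N-1}dr$ with $\mu_k=k(k+N-2)-1$, since for toroidal fields the vector Laplacian acts like the scalar Laplacian on degree-$k$ harmonics shifted appropriately. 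We would therefore work mode by mode. For each $k\ge1$, the radial operator is the $N$-dimensional oscillator restricted to angular momentum $k$, whose spectrum is $N+2k+4j$, $j\ge0$. Equivalently, $x\bm{Y}$-type components are homogeneous of degree $k$, so the lowest eigenvalue is $N+2k$.

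The second step is to read off the spectral gaps. The lowest toroidal level is $k=1$, where the fields are exactly $Axe^{-|x|^2/2}$ (i.e.\ $\cT_1$) with eigenvalue $N+2$, so the deficit vanishes there. The next level is eigenvalue $N+4$, attained only by $k=2$, $j=0$, namely the fields $\cB x\cdot x\, e^{-|x|^2/2}$ with $\cB\in\cM_N$ by Lemma \ref{lem:MN}; the $k=1$, $j=1$ level has eigenvalue $N+6$. All remaining toroidal modes have eigenvalue $\ge N+6$. Expand $\bU=\bU_0+\bU_1+\bU_2$ with $\bU_0\in\cT_1$, $\bU_1\in\cB$-level, and $\bU_2$ orthogonal to $\cT_2$. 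Then
\[
\langle(H-(N+2))\bU,\bU\rangle\ge 2\|\bU_1\|^2+4\|\bU_2\|^2=2(\|\bU_1\|^2+\|\bU_2\|^2)+2\|\bU_2\|^2 .
\]
Since $\dist(\bU,\cT_1)^2=\|\bU_1\|^2+\|\bU_2\|^2$ and $\dist(\bU,\cT_2)^2=\|\bU_2\|^2$, this is exactly \eqref{stabtor}. Sharpness of the first constant is shown by taking $\bU=\cB x\cdot x\,e^{-|x|^2/2}$ with $\cB\ne0$ in $\cM_N$. For this field the deficit equals $2\|\bU\|^2$, the first distance is $\|\bU\|$, and the second vanishes. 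It remains to check that $\cM_N\neq\{0\}$ for $N\ge3$, e.g.\ by an explicit $\cB$ built from $x_1x_2$-type components.

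The main obstacle is the first step: justifying rigorously that toroidal fields decompose orthogonally into these Hermite levels, and identifying the level-$(N+4)$ and level-$(N+2)$ toroidal eigenspaces exactly with $\cM_N$ and $\cA_N$ (including that the $k=2$ level has no $j$-shift contamination). I would first try the abstract argument that $H$ commutes with both the constraint operators: $\dv H=H\dv$ up to the commutator $[\dv,|x|^2]=2x\cdot$, which vanishes on fields with $x\cdot\bU=0$, and $x\cdot H\bU=H(x\cdot\bU)+2\dv\bU$. This would show that the toroidal space is $H$-invariant, hence that the $P_m$ preserve it. After that, only polynomial fields of degree $\le2$ need to be classified, which is Lemma \ref{lem:MN}, plus the observation that degree-$1$ toroidal fields have no Gaussian-corrected part at level $N+4$.
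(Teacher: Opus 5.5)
With the commutator argument of your last paragraph the proof is correct, but it handles the constraint differently from the paper. Both proofs use the same spectral decomposition. The paper's substitution $\bU=\bV e^{-|x|^2/2}$ conjugates your $H-N$ to the Ornstein--Uhlenbeck operator, so your levels $N+2m$ are its Hermite levels $2m$. The paper, however, never proves that the level projections preserve toroidality. It applies the scalar Gaussian Poincar\'e inequality with remainders to each component, with infima over all $A\in\R^{N\times N}$ and all symmetric $\cB$. It then computes the unconstrained minimizers $A_0$ and $\frac{2}{\gamma_N}\cC$ as explicit Gaussian moments of $\bU$, and checks by hand that they lie in $\cA_N$ and $\cM_N$. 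This check uses $x\cdot\bU=0$ and $\int\bU\cdot\nabla\varphi\,e^{-|x|^2/2}dx=0$ for $\varphi=x_i^2,\ x_kx_l,\ |x|^2x_k,\ x_ix_jx_l$. Your invariance lemma replaces this moment bookkeeping and explains why the constraint decouples level by level. It also yields the chain to any length at once (e.g.\ with cubic toroidal profiles), at the cost of a little functional analysis. The paper's route is elementary and produces the minimizers in closed form, but has to be redone by hand at each new level.

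Two points need fixing. First, invariance of the toroidal space under the unbounded operator $H$ on smooth fields does not by itself give invariance under its spectral projections, and the rotation-invariance remark gives nothing. Use the intertwining form of your identities instead. With $\Phi_\pm\bU:=x\cdot\bU\pm\dv\bU$ they read $\Phi_\pm H=(H\pm2)\Phi_\pm$, so $\Phi_\pm$ maps level $m$ into level $m\mp1$ (these are the annihilation and creation operators). Since $\Phi_\pm$ is bounded from the form domain of $H$ into $L^2$, $0=\Phi_\pm\bU=\sum_m\Phi_\pm P_m\bU$ is a sum of mutually orthogonal terms. Hence $\Phi_\pm P_m\bU=0$, and every $P_m\bU$ is toroidal.

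Second, drop the vector spherical harmonics paragraph. The centrifugal coefficient there is $k(k+N-2)$, not $k(k+N-2)-1$: you omitted the normal component of the ambient derivative, which contributes $|\bm Y|^2/r^2$. With your value, $g=r\,e^{-r^2/2}$, i.e.\ $Axe^{-|x|^2/2}$, would not be an eigenfield for $N+2$. The paragraph is not needed anyway. Level $0$ contains no nonzero toroidal field, and Lemma~\ref{lem:MN} identifies the toroidal parts of levels $1$ and $2$ with $\cT_1$ and $\{\cB x\cdot x\,e^{-|x|^2/2}:\cB\in\cM_N\}$, which is all your final computation uses.
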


Our second group of results concerns \eqref{2ndCKN}. Its product deficit is
\begin{equation}\label{proddef2}
\delta_{1,a}(f):=\Big(\int_{\R^N}\frac{|\Delta f|^2}{|x|^{2a}}dx\Big)^{\frac12}\Big(\int_{\R^N}\frac{|\nabla f|^2}{|x|^{2b}}dx\Big)^{\frac12}-\frac{N+4a+2}{2}\int_{\R^N}|\nabla f|^2dx\ \ge0 ,
\end{equation}
which is invariant under $f\mapsto\lambda^{N/2-1}f(\lambda\,\cdot)$ and vanishes exactly on the extremals $c\,\varphi_0(\lambda\,\cdot)$ of \eqref{2ndCKN}. For $k\ge1$ we write
\begin{equation}\label{C1}
C_k(N,a):=\sqrt{(N+2a)^2+4k^2+4k(N-2)}-(N+2a),
\end{equation}
so that $C_1(N,a)=\sqrt{(N+2a)^2+4N-4}-(N+2a)$. Let $s(x):=\frac{|x|^{2+2a}}{2+2a}$ and let $h_1$ be the Kummer function
\begin{equation}\label{h1}
h_1(s):=e^{-s}\,{}_1F_1(b_1,\mu_1,s),\quad
\mu_1:=\frac{N+2+2a}{2+2a},\quad\epsilon_1:=\frac{1}{2+2a},\quad b_1:=\frac{\mu_1-\sqrt{\mu_1^2-4\epsilon_1}}{2},
\end{equation}
which is the profile of the first spherical mode at the scale of \eqref{phi0} (see Section \ref{sec:2nd}). We introduce the two families of gradient fields
\begin{equation}\label{Efam}
\begin{aligned}
\mathcal G_1&:=\{c\,\nabla\varphi_0:c\in\R\},\\
\mathcal G_2&:=\Big\{\nabla\Big[\big(c_0+c_1|x|^{2+2a}\big)\varphi_0+|x|\,h_1(s(x))\,\phi_1\Big(\tfrac{x}{|x|}\Big)\Big]:c_0,c_1\in\R,\ \phi_1\in\cH_1\Big\},
\end{aligned}
\end{equation}
so that $\mathcal G_1\subset\mathcal G_2$, and, with the notation \eqref{dil}, their dilation orbits $\cG_1^{\rm dil}\subset\cG_2^{\rm dil}$; in particular $\cG_1^{\rm dil}$ is exactly the set of gradients of the extremals $c\,\varphi_0(\lambda\,\cdot)$ of \eqref{2ndCKN}. We set
\[
\Theta_1:=\min\big\{4(1+a),\,C_1(N,a)\big\},\quad
\Theta_2:=\min\big\{8(1+a),\ C_1(N,a)+4(1+a),\ C_2(N,a)\big\}-\Theta_1 .
\]

\begin{theorem}\label{thm:prod2nd}
Let $N\ge2$, $a>-1$, $b=-1-a$, and let $f\in C_0^\infty(\R^N)$. Then
\begin{equation}\label{prod2nd}
\delta_{1,a}(f)\ \ge\ \frac{\Theta_1}{2}\dist(\nabla f,\cG_1^{\rm dil})^2+\frac{\Theta_2}{2}\dist(\nabla f,\cG_2^{\rm dil})^2 ,
\end{equation}
and the same stronger statement with a single balancing parameter holds. If $a\ge a_*(N):=\frac{-(N+6)+\sqrt{N^2+4N-4}}{8}$, then $\Theta_1=C_1(N,a)$ and the constant $\frac{\Theta_1}{2}=\frac12C_1(N,a)$ is sharp; in particular the first term of \eqref{prod2nd} is then the sharp stability estimate of \eqref{2ndCKN} in the sense of Bianchi--Egnell.
\end{theorem}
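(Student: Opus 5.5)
The proof has two stages. First I prove a sum-form stability estimate at a fixed scale. Then I pass to the product form by a dilation, in the same way the solenoidal theorems are derived from their sum forms.

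\emph{Sum form.} Set
\[
\delta_{2,a}(f):=\int\frac{|\Delta f|^2}{|x|^{2a}}+\int\frac{|\nabla f|^2}{|x|^{2b}}-(N+4a+2)\int|\nabla f|^2 .
\]
The goal is
\[
\delta_{2,a}(f)\ge\Theta_1\dist(\nabla f,\cG_1)^2+\Theta_2\dist(\nabla f,\cG_2)^2 .
\]
Expand $f=\sum_k f_k(r)\phi_k$ in spherical harmonics. All three integrals split orthogonally over the modes, so $\delta_{2,a}$ becomes a sum of one-dimensional fourth-order quadratic forms $Q_k(f_k)$. Because $1+a+b=0$, the identity \eqref{keyiden} has no lower-order term, and each $Q_k$ is exactly Hamamoto's one-dimensional functional.

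For each $k$, use the exact square \eqref{RemainderIdt}. Applying the Fourier--Hankel transform of real order turns it into a first-order square of CKN type. The sharp weighted Gaussian Poincar\'e inequalities of \cite{DLLN26} then diagonalize $Q_k$ on a Laguerre-type basis. The expected eigenvalues are:
\begin{itemize}
\item radial mode: $4(1+a)m$, $m\ge0$, with the ground state $\nabla\varphi_0$;
\item mode $k\ge1$: $C_k(N,a)+4(1+a)m$, $m\ge0$, with the ground state of mode $1$ given by $|x|h_1(s)\phi_1$.
\end{itemize}
These eigenvalues are taken with respect to $\int|\nabla f_k\phi_k|^2$. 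Orthogonality of the eigenfunctions must hold in the same $\dot H^1$ inner product, i.e.\ in $L^2$ for $\nabla f$, and the transform has to respect this.

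Order the spectrum. Zero belongs to $\cG_1$. The next eigenvalue is $\Theta_1=\min\{4(1+a),C_1\}$, and the $\Theta_1$-eigenspace together with the larger of these two levels spans $\cG_2$. Everything else has eigenvalue at least $\min\{8(1+a),C_1+4(1+a),C_2\}=\Theta_1+\Theta_2$. Writing $\nabla f=g_0+g_1+g_2+g_{\rm rest}$ according to $\cG_1$, $\cG_2\ominus\cG_1$ and the remainder, we get
\[
\delta_{2,a}(f)\ge\Theta_1\|g_1+g_2+g_{\rm rest}\|^2+\Theta_2\|g_{\rm rest}\|^2 ,
\]
since the $\cG_2\ominus\cG_1$ directions have eigenvalue at least $\Theta_1$ and the remainder has eigenvalue at least $\Theta_1+\Theta_2$. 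The two norms on the right are exactly $\dist(\nabla f,\cG_1)^2$ and $\dist(\nabla f,\cG_2)^2$.

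\emph{Product form.} Choose $\lambda_*$ so that $f_{\lambda_*}=\lambda_*^{N/2-1}f(\lambda_*\cdot)$ has its two weighted integrals equal. Both weighted integrals scale by the same power of $\lambda$ with opposite signs, because $1+a+b=0$. Then $\delta_{2,a}(f_{\lambda_*})=2\delta_{1,a}(f_{\lambda_*})=2\delta_{1,a}(f)$. Apply the sum form to $f_{\lambda_*}$ and undo the dilation, which is an $L^2$-isometry on gradients mapping $\cG_i$ to a dilate of itself. This gives \eqref{prod2nd}, and in fact with the single balancing parameter $\lambda_*$.

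\emph{Sharpness.} Assume $a\ge a_*(N)$. Squaring and comparing, this condition is equivalent to $C_1\le4(1+a)$, so $\Theta_1=C_1$. For the test functions take $f=\varphi_0+\varepsilon|x|h_1(s)\phi_1$. For $\varepsilon\to0$, the product deficit and the distance to $\cG_1^{\rm dil}$ are computed to second order. Since $\nabla(|x|h_1\phi_1)$ is $L^2$-orthogonal to $\nabla\varphi_0$ and to the generator of dilations applied to it (they lie in different spherical modes), the optimal scale moves only by $o(\varepsilon)$. Hence
\[
\delta_{1,a}=\tfrac12C_1\varepsilon^2\|\nabla(|x|h_1\phi_1)\|^2+o(\varepsilon^2),\qquad \dist^2=\varepsilon^2\|\nabla(|x|h_1\phi_1)\|^2+o(\varepsilon^2),
\]
so the constant $\frac12C_1$ cannot be improved.

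\emph{Main obstacle.} The hardest step is the spectral claim for the nonradial modes: showing that, after the Fourier--Hankel transform of real order, the fourth-order form $Q_k$ is exactly a weighted Gaussian Poincar\'e form. That form must have ground energy $C_k(N,a)$, spacing $4(1+a)$, and ground state equal to the Kummer profile $h_1$. The orthogonality used must also be that of $L^2$ for the gradients. I would first check this on mode $k=1$, matching the parameters $\mu_1,\epsilon_1,b_1$ of \eqref{h1} with the Kummer equation.
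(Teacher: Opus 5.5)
Your proposal is correct and follows essentially the paper's route: you prove the sum-form estimate (Theorem \ref{thm:2nd}) by splitting into spherical modes, using \eqref{keyiden} with Theorem \ref{thm:1d} for $k\ge1$ and the weighted Gaussian Poincar\'e inequality for the radial mode (your spectral ordering is a repackaging of the paper's block-by-block comparison), and then pass to the product form with the balancing dilation of Lemma \ref{lem:transfer}. The only deviation is in the sharpness test: you perturb $\varphi_0$ by $\varepsilon|x|h_1(s)\phi_1$ and expand to order $\varepsilon^2$, whereas the paper takes $f=|x|h_1(s)\phi_1$ itself, which is balanced and gives exact equality with $\dist(\nabla f,\cG_2^{\rm dil})=0$; your variant also works, and because the two pieces lie in different spherical modes the distance to $\cG_1^{\rm dil}$ is exactly $\varepsilon^2\|\nabla(|x|h_1\phi_1)\|^2$, so you need no analysis of how the optimal scale moves.
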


As before, Theorem \ref{thm:prod2nd} is deduced from a stability estimate at a fixed scale, for the deficit in sum form
\begin{equation}\label{deficit2}
\delta_{2,a}(f):=\int_{\R^N}\frac{|\Delta f|^2}{|x|^{2a}}dx+\int_{\R^N}\frac{|\nabla f|^2}{|x|^{2b}}dx-(N+4a+2)\int_{\R^N}|\nabla f|^2dx\ \ge0 ,
\end{equation}
which satisfies $\delta_{2,a}\ge2\delta_{1,a}$ and vanishes exactly on the extremals $c\,\varphi_0$ at the scale of \eqref{phi0}.

\begin{theorem}\label{thm:2nd}
Let $N\ge2$, $a>-1$ and $b=-1-a$. Then for all $f\in C_0^\infty(\R^N)$,
\begin{equation}\label{stab2nd}
\delta_{2,a}(f)\ \ge\ \Theta_1\,\dist(\nabla f,\mathcal G_1)^2+\Theta_2\,\dist(\nabla f,\mathcal G_2)^2 .
\end{equation}
Both constants are sharp: there are functions for which equality holds in \eqref{stab2nd} with only the first term nonzero, and functions for which equality holds with the second term nonzero, so that none of $\Theta_1,\Theta_2$ can be increased while the other is kept fixed. In particular
\begin{equation}\label{stab2nd1}
\delta_{2,a}(f)\ \ge\ \min\big\{4(1+a),\,C_1(N,a)\big\}\inf_{c\in\R}\int_{\R^N}|\nabla f-c\,\nabla\varphi_0|^2dx ,
\end{equation}
which is the sharp stability estimate of \eqref{2ndCKN} at a fixed scale.
\end{theorem}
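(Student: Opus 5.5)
We argue mode by mode: expand $f$ in spherical harmonics, turn each mode into a one-dimensional fourth order problem, convert that to a first order weighted Gaussian Poincaré problem by a Fourier--Hankel transform, and then add up the mode gaps.

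\textbf{Step 1: reduction to radial blocks.} Write $f=\sum_{k\ge0}\sum_j f_{k,j}(r)\phi_{k,j}(\sigma)$ with $\phi_{k,j}$ an orthonormal basis of $\cH_k$. The three integrals in \eqref{deficit2} split orthogonally over the modes, and so do the distances to $\cG_1$ and $\cG_2$. Indeed $\cG_1$ lives in mode $0$, while $\cG_2$ has a two-dimensional radial part in mode $0$ and an $N$-dimensional part in mode $1$. Hence it suffices to prove, for each $k$, a sharp chain for the one-dimensional quadratic form $Q_k(g)=\int|\Delta(g\phi)|^2|x|^{-2a}+\int|\nabla(g\phi)|^2|x|^{2+2a}-(N+4a+2)\int|\nabla(g\phi)|^2$.

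\textbf{Step 2: change of variables.} On the line $1+a+b=0$, substitute $s=r^{2+2a}/(2+2a)$ together with the Tertikas--Zographopoulos type factor. By the identity \eqref{keyiden}, where the lower order term carrying $1+a+b$ vanishes, each $Q_k$ becomes Hamamoto's fourth order functional with effective dimension $\mu_k$ and parameter $\epsilon_k$. The exact square \eqref{RemainderIdt} then writes $Q_k$ as a nonnegative square plus an explicit multiple of $\int|\nabla(g\phi)|^2$. Apply the Fourier--Hankel transform of real order $\mu_k-1$ to this square. The second order operator becomes first order, and the square becomes the remainder of a first order weighted CKN/HUP in the transformed variable.

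\textbf{Step 3: spectral gaps.} The sharp stability of the first order CKN problem from \cite{DLLN26} is a weighted Gaussian (Laguerre) Poincaré inequality with equally spaced eigenvalues. Pulled back through the transform, it gives for mode $k$ a spectrum $C_k(N,a)+4(1+a)m$, $m=0,1,\dots$, with the convention $C_0=0$. The eigenfunctions are $e^{-s}$ times Kummer functions; in particular $h_1$ is the bottom of mode $1$, and $(c_0+c_1|x|^{2+2a})\varphi_0$ spans the two lowest levels of mode $0$. Since $C_k$ is increasing in $k$, the ordered deficit eigenvalues (per unit $\|\nabla f\|^2$) are:
\begin{itemize}
\item $0$, attained on $\cG_1$;
\item next $\Theta_1=\min\{4(1+a),C_1\}$;
\item then the smallest among $8(1+a)$, $C_1+4(1+a)$ and $C_2$, which equals $\Theta_1+\Theta_2$, for everything orthogonal to $\cG_2$.
\end{itemize}
Write $P_1,P_2$ for the orthogonal projections onto $\cG_1,\cG_2$, with $\nabla f=P_2\nabla f+(I-P_2)\nabla f$. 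This gives
\[
\delta_{2,a}(f)\ge\Theta_1\|(P_2-P_1)\nabla f\|^2+(\Theta_1+\Theta_2)\|(I-P_2)\nabla f\|^2,
\]
which is exactly $\Theta_1\dist(\nabla f,\cG_1)^2+\Theta_2\dist(\nabla f,\cG_2)^2$. For sharpness, take the eigenfunction realizing $\Theta_1$, which lies in $\cG_2\ominus\cG_1$, and the one realizing $\Theta_1+\Theta_2$, which is orthogonal to $\cG_2$. Then \eqref{stab2nd1} follows by dropping the second term. The eigenfunctions are not compactly supported, so sharpness requires a routine approximation by $C_0^\infty$ functions.

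\textbf{Main obstacle.} The hard step is Step 2: showing rigorously that the Fourier--Hankel transform of real order intertwines the fourth order square with the first order one, with the correct weight. We also need the transformed functions to stay in the domain where \cite{DLLN26} applies, especially for mode $0$ and for noninteger orders. We will first verify the transform on the Kummer eigenfunctions, where it yields Laguerre-type Gaussians, and then extend by density. A further check is that the eigenfunctions in mode $0$ at the level $4(1+a)$ are exactly $|x|^{2+2a}\varphi_0$ modulo $\varphi_0$.
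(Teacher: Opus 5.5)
Your proposal is correct and follows essentially the paper's route: spherical-harmonic splitting, then the identity \eqref{keyiden} on the line $1+a+b=0$ to reduce each mode $k\ge1$ to Hamamoto's functional with $\mu=\frac{N+2k+2a}{2+2a}$, $\epsilon=\frac{k}{2+2a}$, then Theorem \ref{thm:1d} (whose proof is exactly your Hankel-transform step) for the gaps $C_k(N,a)$ and the extra $4(1+a)$ in mode one, and finally a block-by-block bookkeeping that is equivalent to your projection inequality. The only difference is the radial mode: the paper treats it directly by completing the square \eqref{radsq} in $f_{01}'$ and applying Corollary \ref{cor:GP}(ii), which avoids the case $\epsilon=0$; that case lies outside the stated range of Theorem \ref{thm:1d}, although its proof goes through there with $b=0$.
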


It is worth noting that the competitors in the constants have different origins: $4(1+a)$ and $8(1+a)$ are the first two levels of the radial problem, i.e.\ of a weighted Gaussian Poincar\'e inequality of \cite{DLLN26}; $C_k(N,a)$ is the gap between the $k$-th spherical mode and the radial one, computed through Hamamoto's one-dimensional inequality (Theorem \ref{thm:1d} below), and is increasing in $k$; and $C_1(N,a)+4(1+a)$ is the second level inside the first spherical mode, which comes from the remainder term in Theorem \ref{thm:1d}. A direct computation shows that $4(1+a)\le C_1(N,a)$ if and only if $a\le a_*(N)\in(-1,-\frac12)$, with $a_*(N)$ as in Theorem \ref{thm:prod2nd}: the radial gap is the smaller one near the Hardy endpoint $a=-1$, and the first spherical mode is the smaller one for $a\ge a_*(N)$. In particular for $a=0$ the sharp constant is $\sqrt{N^2+4N-4}-N$ for all $N\ge2$.

Both groups of results rest on a one-dimensional inequality of Hamamoto \cite[Theorem 5.2]{Ham23} for a fourth order functional. We record in Section \ref{sec:pre} a two-parameter extension of it obtained by expanding a square (Proposition \ref{prop:ext}), and we give a new proof of it, entirely different from his, which provides in addition a sharp remainder term, the equality cases, and the whole sequence of critical values of the associated quotient (Theorem \ref{thm:1d}). We find it remarkable that the Fourier--Hankel transform reduces Hamamoto's fourth order minimization problem to a weighted Gaussian Poincar\'e inequality in a few lines.

\subsection{Notation and conventions}
Vector fields are denoted by bold letters, $\bU,\bV,\bE$, and their components by $U_i,V_i,E_i$; points of $\R^N$ are denoted by $x$, with $r=|x|$ and $\sigma=x/r\in\bS^{N-1}$, and the dual variable of the Fourier transform by $\xi$. For a radial function we use the same letter for the function and its profile. All functions and fields are real-valued; since all the functionals we consider are quadratic with real coefficients, the results extend at once to complex-valued functions by applying them to the real and imaginary parts, and we write $|f|^2$, $|f'|^2$ throughout. The solenoidal fields are always assumed to be smooth and to satisfy $\int|\nabla\bU|^2+\int|\bU|^2|x|^2+\int|\bU|^2<\infty$, or to belong to the completion of such fields for the corresponding norm; this is the space in which Theorem \ref{THam} holds \cite{Ham23}, and all the identities below extend to it by density. Constants such as $C,c$ may change from line to line, while the constants $C_N$, $C(N,k)$, $C_k(N,a)$, $\Lambda_j$, $\Theta_j$ are fixed by the definitions of Section \ref{sec:main}.

\subsection{Strategy}\label{sec:strategy}
We work with the deficits in sum form, i.e.\ at a fixed scale; the product form is recovered by applying the fixed-scale estimates to the dilate of the test field which balances the two integrals (Lemma \ref{lem:transfer}). Both problems reduce, after a decomposition into spherical harmonics (and, for \eqref{HUPsol}, the poloidal--toroidal decomposition), to one-dimensional inequalities of the form \eqref{RemainderIdt}, whose remainder we need to control. The novelty of our approach is the following observation. When $2\mu$ is an integer and $f$ is regarded as a radial function $G$ on $\R^{2\mu}$, the three integrals in \eqref{QP} (with $x=r^2$) are, up to constants, $\int|\Delta G|^2$, $\int|\nabla G|^2|X|^2-4\epsilon\int |G|^2$ and $\int|\nabla G|^2$. By Plancherel's identity these become $\int|\xi|^4|\widehat G|^2$, $\int|\xi|^2|\nabla\widehat G|^2-4\epsilon\int|\widehat G|^2$ and $\int|\xi|^2|\widehat G|^2$, and after extracting the power $|\xi|^\alpha$ with $\alpha(2\mu+\alpha)=-4\epsilon$ the zero order term disappears: Hamamoto's inequality is turned into a \emph{first order} CKN inequality \eqref{CKNfirst} with weights, for radial functions on $\R^{2\mu}$, whose parameters satisfy $1+b-a=2$. For radial functions the latter is equivalent to a weighted Gaussian Poincar\'e inequality, whose sharp stability was established in \cite{DLLN26} with the constant $2(1+b-a)=4$, and the distance in Fourier variables is transformed back into the distance to Hamamoto's extremals by an explicit computation of an inverse Fourier transform in terms of Kummer's function. This gives, for each spherical mode, the block estimate ``deficit $\ge4\,\dist^2$'' with the sharp constant $4$, from which the theorems follow by comparing the gaps between the blocks. Since only radial functions are involved, the dimension $2\mu$ need not be an integer, and the same argument, carried out directly with the Hankel transform of real order in Section \ref{sec:pre}, gives Theorem \ref{thm:1d}.

The paper is organized as follows. In Section \ref{sec:pre} we collect the weighted Gaussian Poincar\'e inequality, we state and prove the two-parameter extension of Hamamoto's inequality and its sharp remainder version (Theorem \ref{thm:1d}), and we collect the change-of-functions identities we need. Section \ref{sec:sol} recalls the poloidal--toroidal decomposition and establishes the Fourier reduction of the poloidal blocks. Section \ref{sec:tor} is devoted to toroidal fields and to the proof of Theorem \ref{thm:tor}. In Section \ref{sec:proofHUP} we combine the blocks and prove the fixed-scale Theorems \ref{thm:HUP4} and \ref{thm:HUP3}, and then transfer them to the product form, proving Theorems \ref{thm:prodHUP4} and \ref{thm:prodHUP3}. Finally, in Section \ref{sec:2nd}, we prove Theorems \ref{thm:2nd} and \ref{thm:prod2nd}.

\section{Preliminaries}\label{sec:pre}

\subsection{The weighted Gaussian Poincar\'e inequality}
We recall the weighted Gaussian Poincar\'e inequality for radial functions of \cite{DLLN26}, in the one-dimensional form we need.

\begin{theorema}[{\cite[Theorems 1.2 and 1.4]{DLLN26}}]\label{TA}
Let $\alpha>0$ and let $v\in W^{1,2}\big((0,\infty),e^{-t^2/2}t^{\alpha-1}dt\big)$. Then one has
\begin{align*}
\int_0^\infty|v'|^2e^{-\frac{t^2}{2}}t^{\alpha-1}dt\ \ge\ &2\inf_{c\in\R}\int_0^\infty|v-c|^2e^{-\frac{t^2}{2}}t^{\alpha-1}dt\\
&+2\inf_{c_0,c_1\in\R}\int_0^\infty|v-c_0-c_1t^2|^2e^{-\frac{t^2}{2}}t^{\alpha-1}dt ,
\end{align*}
and both constants are sharp: the first is attained exactly by $v=a_0+a_1t^2$, the second exactly by the functions $v=a_0+a_1t^2+a_2t^4$ with $a_2\ne0$.
\end{theorema}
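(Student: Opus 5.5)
The plan is to diagonalize the Dirichlet form by a Laguerre expansion. Everything is one-dimensional, and the weight $e^{-t^2/2}t^{\alpha-1}$ becomes a Gamma measure after the substitution $s=t^2/2$. Under this change of variables polynomials in $t^2$ become polynomials in $s$. Both infima in the statement are then tails of one orthogonal expansion, and the inequality reduces to the elementary bound $n\ge \mathbf 1_{n\ge1}+\mathbf 1_{n\ge2}$ for integers $n\ge0$.

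\textbf{Step 1 (change of variables).} Put $s=t^2/2$, $w(s)=v(t)$ and $\beta=\frac{\alpha}{2}-1>-1$. Then $t^{\alpha-1}e^{-t^2/2}dt=2^{\beta}s^{\beta}e^{-s}ds$ and $v'(t)=t\,w'(s)$, so $|v'|^2=2s|w'|^2$. Writing $d\mu_\beta=s^\beta e^{-s}ds$, the statement becomes, up to the common factor $2^\beta$,
\[
2\int_0^\infty s|w'|^2d\mu_\beta\ \ge\ 2\inf_{c}\|w-c\|^2_{L^2(\mu_\beta)}+2\inf_{c_0,c_1}\|w-c_0-c_1 s\|^2_{L^2(\mu_\beta)} .
\]

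\textbf{Step 2 (spectral decomposition).} The generalized Laguerre polynomials $L_n^{(\beta)}$ form an orthogonal basis of $L^2(\mu_\beta)$. Completeness holds because $\mu_\beta$ has exponential moments, so polynomials are dense. These polynomials are the eigenfunctions of $\mathcal L w=-\big(sw''+(\beta+1-s)w'\big)$ with eigenvalues $n$, and $\mathcal L$ is the operator associated with the form $\int s|w'|^2d\mu_\beta$. Expand $w=\sum_n c_nL_n^{(\beta)}$ and set $m_n=|c_n|^2\|L_n^{(\beta)}\|^2$. Then:
\begin{itemize}
\item the energy is $\int s|w'|^2d\mu_\beta=\sum_{n\ge0} n\,m_n$;
\item since $\mathrm{span}\{1\}=\mathrm{span}\{L_0\}$, the first infimum equals $\sum_{n\ge1}m_n$;
\item since $\mathrm{span}\{1,s\}=\mathrm{span}\{L_0,L_1\}$, the second infimum equals $\sum_{n\ge2}m_n$.
\end{itemize}
The inequality is therefore $\sum_n n\,m_n\ge\sum_n(\mathbf 1_{n\ge1}+\mathbf 1_{n\ge2})m_n$. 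This holds termwise, with equality exactly for $n\in\{0,1,2\}$.

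\textbf{Step 3 (sharpness).} Equality in the whole inequality holds if and only if $c_n=0$ for all $n\ge3$, that is, $w$ is a polynomial of degree at most $2$ in $s$. In terms of $v$ this means $v=a_0+a_1t^2+a_2t^4$.
\begin{itemize}
\item If $a_2=0$ (and $a_1\neq0$), the second term vanishes and the first term is attained with a nonzero distance, so the first constant $2$ cannot be increased.
\item If $a_2\ne0$, the second term is nonzero and equality still holds, so the second constant cannot be increased either.
\end{itemize}
Conversely, any function with some $c_n\neq0$ for $n\ge3$ gives strict inequality. This yields the stated characterizations of the equality cases.

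\textbf{Main obstacle.} The technical point is to justify the spectral identity $\int s|w'|^2d\mu_\beta=\sum n\,m_n$ for every $v$ in the weighted space $W^{1,2}$, not only for polynomials. My plan is to show that polynomials are dense in the form domain $\{w:\int(|w|^2+s|w'|^2)d\mu_\beta<\infty\}$. I would first truncate and mollify to get compactly supported smooth functions on $[0,\infty)$; near $s=0$ the weight $s^{\beta+1}$ is integrable because $\beta>-1$. I would then approximate these by polynomials in the form norm, using the self-adjointness of the Laguerre operator on $L^2(\mu_\beta)$, whose domain is the Friedrichs extension. With density in hand, both sides of the identity are continuous in the form norm, so it passes from polynomials to the whole space.
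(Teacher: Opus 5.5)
Your proposal is correct and is essentially the paper's argument: the substitution $z=t^2/2$, the expansion in the Laguerre polynomials $L_j^{(\alpha/2-1)}$, and the termwise comparison $j\ge \mathbf{1}_{j\ge1}+\mathbf{1}_{j\ge2}$, with equality exactly when the levels $j\ge3$ vanish. The paper computes the energy from $\frac{d}{dz}L_j^{(\beta)}=-L_{j-1}^{(\beta+1)}$ and orthogonality rather than from the eigenvalue equation, which is equivalent; your extra step justifying the spectral identity on the whole weighted $W^{1,2}$ space (density of polynomials in the form domain) is a point the paper passes over silently, and your plan for it is sound.
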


The two-term form follows from the proof in \cite[Section 2]{DLLN26}, which is by expansion in Laguerre polynomials; since the same computation is used, in a different guise, in the proof of Theorem \ref{thm:1d} below, let us give it. With $z=t^2/2$ and $u(z)=v(t)$, so that $v'(t)=t\,u'(z)$ and $t^{\alpha+1}dt=(2z)^{\alpha/2}dz$, one has
\begin{align*}
\int_0^\infty|v'|^2e^{-\frac{t^2}{2}}t^{\alpha-1}dt&=2^{\alpha/2}\int_0^\infty|u'|^2z^{\alpha/2}e^{-z}dz,\\
\int_0^\infty|v|^2e^{-\frac{t^2}{2}}t^{\alpha-1}dt&=2^{\alpha/2-1}\int_0^\infty|u|^2z^{\alpha/2-1}e^{-z}dz .
\end{align*}
Expanding $u=\sum_{j\ge0}a_jL_j^{(\alpha/2-1)}(z)$ in the generalized Laguerre polynomials, which are orthogonal in $L^2(z^{\alpha/2-1}e^{-z}dz)$ with $\int_0^\infty|L_j^{(\beta)}|^2z^\beta e^{-z}dz=\frac{\Gamma(j+\beta+1)}{j!}$, and using $\frac{d}{dz}L_j^{(\beta)}=-L_{j-1}^{(\beta+1)}$ \cite[Chapter V]{Szego}, one gets
\[
\int_0^\infty|u'|^2z^{\alpha/2}e^{-z}dz=\sum_{j\ge1}j\,|a_j|^2\frac{\Gamma(j+\frac\alpha2)}{j!},\quad
\int_0^\infty|u|^2z^{\alpha/2-1}e^{-z}dz=\sum_{j\ge0}|a_j|^2\frac{\Gamma(j+\frac\alpha2)}{j!},
\]
so that, with the factor $2^{\alpha/2}/2^{\alpha/2-1}=2$, the level $j$ contributes $2j$ times its squared norm to the left-hand side, while the two infima on the right-hand side are the squared norms of the projections onto the levels $j\ge1$ and $j\ge2$; since $2j\ge2$ for $j\ge1$ and $2j\ge4$ for $j\ge2$, the inequality follows, with equality exactly when $a_j=0$ for $j\ge3$, i.e.\ $v=a_0+a_1t^2+a_2t^4$.

By an elementary change of variable, Theorem \ref{TA} takes the following two forms, which are the ones we shall use.

\begin{corollary}\label{cor:GP}
The following two inequalities hold.
\begin{itemize}
\item[(i)] For $\alpha>0$ and all $v$,
\begin{equation}\label{GP1}
\int_0^\infty|v'|^2r^{\alpha-1}e^{-r^2}dr\ \ge\ 4\inf_{c\in\R}\int_0^\infty|v-c|^2r^{\alpha-1}e^{-r^2}dr .
\end{equation}
\item[(ii)] For $a>-1$, $\beta>-4a-2$ and all $v$,
\begin{equation}\label{GP2a}
\int_0^\infty|v'|^2r^{\beta+2a+1}e^{-\frac{r^{2+2a}}{1+a}}dr\ \ge\ 4(1+a)\inf_{c\in\R}\int_0^\infty|v-c|^2r^{\beta+4a+1}e^{-\frac{r^{2+2a}}{1+a}}dr .
\end{equation}
\end{itemize}
In both cases the constant is sharp, with equality for $v=a_0+a_1r^2$ in \eqref{GP1} and for $v=a_0+a_1r^{2+2a}$ in \eqref{GP2a}. Moreover, both inequalities hold with the additional second-level term of Theorem \ref{TA}: with the same constant, $\inf_{c_0,c_1}\int|v-c_0-c_1r^2|^2$ resp.\ $\inf_{c_0,c_1}\int|v-c_0-c_1r^{2+2a}|^2$ (with the same weights) can be added to the right-hand side.
\end{corollary}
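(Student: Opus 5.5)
Both inequalities follow from Theorem \ref{TA} by a power change of variable $t=\kappa r^{\gamma}$, chosen so that the Gaussian factors match. Under such a change, $t^2$ becomes a multiple of $r^{2\gamma}$. Consequently the affine spaces $\{c\}$ and $\{c_0+c_1t^2\}$ are carried onto $\{c\}$ and $\{c_0+c_1r^{2\gamma}\}$, and the extremals $a_0+a_1t^2$ (resp.\ $a_0+a_1t^2+a_2t^4$) are carried onto the claimed ones. Sharpness, equality cases and the second-level term therefore transfer directly. What remains is to compute the weights and the constants.

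For (i) I take $t=\sqrt2\,r$, so that $e^{-t^2/2}=e^{-r^2}$. Given $v$ on $(0,\infty)$ in \eqref{GP1}, I set $V(t):=v(t/\sqrt2)$ and apply Theorem \ref{TA} to $V$. Then $v'(r)=\sqrt2\,V'(t)$ and $t^{\alpha-1}dt=2^{\alpha/2}r^{\alpha-1}dr$. This gives
\[
\int_0^\infty|v'|^2r^{\alpha-1}e^{-r^2}dr=2\cdot2^{-\alpha/2}\int_0^\infty|V'|^2e^{-\frac{t^2}{2}}t^{\alpha-1}dt ,
\]
and likewise the zero-order integrals differ by the common factor $2^{-\alpha/2}$. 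Hence the constant $2$ of Theorem \ref{TA} becomes $2\cdot2=4$, for both levels. The infima over $c$ and over $(c_0,c_1)$ are preserved, since $c_0+c_1t^2=c_0+2c_1r^2$.

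For (ii) I match $\frac{t^2}{2}=\frac{r^{2+2a}}{1+a}$, i.e.\ $t=\sqrt{2/(1+a)}\,r^{1+a}$, so that $\frac{dt}{dr}=\sqrt{2(1+a)}\,r^{a}$. With $v(r)=V(t)$ we get $|V'(t)|^2=|v'(r)|^2/(2(1+a)r^{2a})$, and $t^{\alpha-1}\,dt=K\,r^{(1+a)(\alpha-1)+a}\,dr$ for an explicit constant $K>0$.

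The derivative term then carries the power $(1+a)\alpha-1-2a$ and the factor $K/(2(1+a))$. The zero-order term carries the power $(1+a)\alpha-1$ and the factor $K$. Choosing $\alpha=\frac{\beta+4a+2}{1+a}$, which is positive exactly when $\beta>-4a-2$, these powers become $\beta+2a+1$ and $\beta+4a+1$ respectively. Theorem \ref{TA} then yields \eqref{GP2a} with constant $2\cdot2(1+a)=4(1+a)$, together with the second-level term in the variable $t^2\propto r^{2+2a}$.

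The only points needing care are the bookkeeping of exponents and the check that the weighted Sobolev spaces correspond under the change of variables; the latter is immediate, since the map is a smooth monotone bijection of $(0,\infty)$. I do not expect a genuine obstacle.
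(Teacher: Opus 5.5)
Your proposal is correct and is essentially the paper's own proof. It uses the same substitutions $t=\sqrt2\,r$ and $t=\sqrt{2/(1+a)}\,r^{1+a}$ in Theorem \ref{TA}, the same choice $\alpha=\frac{\beta+4a+2}{1+a}$, the same weights $r^{\beta+2a+1}$ and $r^{\beta+4a+1}$, the same constants $2\cdot2$ and $2\cdot2(1+a)$, and the same observation that $t^2\propto r^{2\gamma}$ carries the extremals and the second-level spaces across.
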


\begin{proof}
For (i) put $t=\sqrt2\,r$ in Theorem \ref{TA}. For (ii) put $t=\sqrt{2/(1+a)}\,r^{1+a}$, so that $e^{-t^2/2}=e^{-r^{2+2a}/(1+a)}$, $dt=\sqrt{2(1+a)}\,r^{a}dr$ and $\frac{dv}{dt}=\frac{v'(r)}{\sqrt{2(1+a)}\,r^{a}}$; Theorem \ref{TA} with $\alpha=\frac{\beta+4a+2}{1+a}>0$ then gives \eqref{GP2a}, the weights being $r^{(1+a)(\alpha-1)-a}=r^{\beta+2a+1}$ on the left and $r^{(1+a)(\alpha-1)+a}=r^{\beta+4a+1}$ on the right.
\end{proof}

\begin{remark}[The complete chain for Theorem \ref{TA}]\label{rem:GPchain}
The Laguerre expansion given after Theorem \ref{TA} shows that its two terms are the first two steps of a chain of arbitrary length, all of whose constants equal $2$. Denote by $\cP_i$ the space of the polynomials of degree at most $i$ in the variable $t^2$, i.e.\ the span of the Laguerre levels $j\le i$, and write $d\gamma_\alpha:=e^{-t^2/2}t^{\alpha-1}dt$. Then, for every $n\ge1$ and every $v$,
\begin{equation}\label{GPchain}
\int_0^\infty|v'|^2d\gamma_\alpha\ \ge\ 2\sum_{i=1}^n\ \inf_{q\in\cP_{i-1}}\int_0^\infty|v-q|^2d\gamma_\alpha ,
\end{equation}
all the constants being optimal in the sense that none of them can be increased while the others are kept fixed, with equality if and only if $v\in\cP_n$; the case $n=2$ is Theorem \ref{TA}. Moreover the chain is complete (see \cite{LL26} for this phenomenon for the Gaussian Poincar\'e inequality, and for its characterization through the arithmetic spectrum): letting $n\to\infty$ gives the identity
\begin{equation}\label{GPchaininfty}
\int_0^\infty|v'|^2d\gamma_\alpha\ =\ 2\sum_{i=1}^\infty\ \inf_{q\in\cP_{i-1}}\int_0^\infty|v-q|^2d\gamma_\alpha .
\end{equation}
Indeed, with $c_j:=|a_j|^2\frac{\Gamma(j+\frac\alpha2)}{j!}$ the squared components of the expansion above, the left-hand side of \eqref{GPchain} equals $2\sum_{j\ge1}j\,c_j$ and its $i$-th term equals $2\sum_{j\ge i}c_j$, so that \eqref{GPchain} amounts to $\sum_{j\ge1}j\,c_j\ge\sum_{j\ge1}\min\{j,n\}\,c_j$; this holds because $j\ge\min\{j,n\}$, with equality if and only if $c_j=0$ for $j>n$, and letting $n\to\infty$ gives \eqref{GPchaininfty} by monotone convergence. Taking $v=L_i^{(\frac\alpha2-1)}(\frac{t^2}{2})$, both sides of \eqref{GPchain} equal $2i\,c_i$ as soon as $n\ge i$, which shows that the $i$-th constant cannot be increased. The same statement holds for the two forms of Corollary \ref{cor:GP}, with the constants $4$ and $4(1+a)$ respectively, and for the radial Gaussian Poincar\'e inequality in $\R^N$, by the changes of variable used there.
\end{remark}

\subsection{Hamamoto's inequality: an extension and a sharp remainder term}\label{sec:ext}
The key point in the derivation of the best constant of \eqref{HUPsol} in \cite{Ham23} is the one-dimensional minimization problem
\begin{equation}\label{Hmin}
\min_{f\not\equiv0}\ \frac{\int_0^\infty|f''|^2x^{\mu+1}dx\,\int_0^\infty\big(x^2|f'|^2-\epsilon|f|^2\big)x^{\mu-1}dx}{\Big(\int_0^\infty|f'|^2x^{\mu}dx\Big)^2},
\end{equation}
where $\mu,\epsilon$ are two positive parameters with $\epsilon<\mu^2/4$. All functions in this subsection are real-valued; since the operators below have real coefficients and all the functionals are quadratic, the results extend to complex-valued functions by applying them to the real and imaginary parts. Throughout the paper we write, for such $\mu,\epsilon$,
\begin{equation}\label{QP}
Q[f]:=\int_0^\infty|f''|^2x^{\mu+1}dx,\quad
P_1[f]:=\int_0^\infty\big(x^2|f'|^2-\epsilon|f|^2\big)x^{\mu-1}dx,\quad
P_0[f]:=\int_0^\infty|f'|^2x^{\mu}dx ,
\end{equation}
and
\begin{equation}\label{bf0}
b:=\frac{\mu-\sqrt{\mu^2-4\epsilon}}{2},\qquad f_0(x):=e^{-x}{}_1F_1(b,\mu,x)={}_1F_1(\mu-b,\mu,-x),
\end{equation}
the last equality being Kummer's transformation. Hamamoto \cite[Theorem 5.2]{Ham23} proved that the minimum in \eqref{Hmin} is $\frac{c^2}{4}$ with $c=1+\sqrt{\mu^2-4\epsilon}$, attained exactly at the dilates $f_0(\lambda\,\cdot)$; a much shorter proof was given in \cite{Ham24}. In this subsection we first extend this inequality by a two-parameter family, and then we establish its sharp stability. Both rest on writing the deficit as the integral of a square. Put
\[
J[f]:=\int_0^\infty|f'|^2x^{\mu-1}dx,\qquad M_-[f]:=\int_0^\infty|f|^2x^{\mu-2}dx .
\]
The natural function space for these functionals is the completion $\cD_\mu$ of $C_c^\infty([0,\infty))$, the smooth functions with compact support in $[0,\infty)$ (not required to vanish at $0$), with respect to the norm
\begin{equation}\label{Dmu}
\|f\|_{\cD_\mu}^2:=\int_0^\infty|f''|^2x^{\mu+1}dx+\int_0^\infty|f'|^2x^{\mu+1}dx+\int_0^\infty|f'|^2x^{\mu-1}dx+\int_0^\infty|f|^2x^{\mu-1}dx ,
\end{equation}
which is the space $\bar{\cE}_\mu$ of \cite{HOT}. On $\cD_\mu$ the functionals $Q$, $P_1$, $J$ are finite, $P_0$ is finite by the Cauchy--Schwarz inequality between the two $|f'|^2$-terms of \eqref{Dmu}, and all the boundary terms in the integrations by parts below vanish, by density and \cite[Lemma 1]{HOT}. The extremals $f_0$ and $f_1$ of Theorem \ref{thm:1d} belong to $\cD_\mu$, since ${}_1F_1(\mu-b,\mu,-x)=O(x^{-(\mu-b)})$ as $x\to\infty$ and $\int^\infty x^{\mu-1-2(\mu-b)}dx<\infty$. The functional $M_-$, and the vanishing of the boundary term $|f|^2x^{\mu-1}$ at $x=0$, require in addition $\mu>1$, which we assume in the next statement; then $M_-$ is finite on $\cD_\mu$ by Hardy's inequality $\int_0^\infty|f|^2x^{\mu-2}dx\le\frac{4}{(\mu-1)^2}\int_0^\infty|f'|^2x^{\mu}dx$. All the values of $\mu$ occurring in this paper satisfy $\mu>1$.

\begin{proposition}\label{prop:ext}
Let $\mu>1$, $0<\epsilon<\frac{\mu^2}{4}$ and $0\le\eta\le\frac{\mu^2}{4}$, and set
\[
\alpha:=\frac{\mu+\sqrt{\mu^2-4\eta}}{2},\qquad\beta:=\frac{\mu+\sqrt{\mu^2-4\epsilon}}{2}=\mu-b .
\]
Then for all $f\in\cD_\mu$,
\begin{align}\label{ext}
\int_0^\infty|f''|^2x^{\mu+1}dx&-\eta\int_0^\infty|f'|^2x^{\mu-1}dx+\int_0^\infty\big(x^2|f'|^2-\epsilon|f|^2\big)x^{\mu-1}dx\notag\\
&\ge\ \Big(1+\mu+\sqrt{\mu^2-4\epsilon}-\sqrt{\mu^2-4\eta}\Big)\int_0^\infty|f'|^2x^{\mu}dx\notag\\
&\qquad-\frac{\mu+\sqrt{\mu^2-4\epsilon}}{2}\,\frac{\mu-\sqrt{\mu^2-4\eta}}{2}\,(\mu-1)\int_0^\infty|f|^2x^{\mu-2}dx ,
\end{align}
with equality if and only if $f=C\,e^{-x}{}_1F_1(\alpha-\beta,\alpha,x)$. More precisely, one has the identity
\begin{align}\label{extid}
&\int_0^\infty\Big|xf''+(\alpha+x)f'+\beta f\Big|^2x^{\mu-1}dx\notag\\
&\qquad=Q[f]-\eta J[f]+P_1[f]-\Big(1+\mu+\sqrt{\mu^2-4\epsilon}-\sqrt{\mu^2-4\eta}\Big)P_0[f]\notag\\
&\qquad\qquad+\beta(\mu-1)(\mu-\alpha)M_-[f].
\end{align}
For $\eta=0$ one has $\alpha=\mu$, the last term vanishes, and \eqref{extid} is the identity
\begin{equation}\label{RemainderIdt}
Q[f]+P_1[f]-\Big(1+\sqrt{\mu^2-4\epsilon}\Big)P_0[f]=\int_0^\infty\Big|xf''+(x+\mu)f'+(\mu-b)f\Big|^2x^{\mu-1}dx ,
\end{equation}
which holds for all $\mu>0$ and contains Hamamoto's inequality with its equality case $f=Cf_0$.
\end{proposition}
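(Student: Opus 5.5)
The core of the argument is the identity \eqref{extid}: the inequality \eqref{ext} and its equality case follow from it at once. I will expand the square on the left-hand side, integrate the cross terms by parts, and compare the result with the right-hand side term by term. Write $Lf:=xf''+(\alpha+x)f'+\beta f$. Then
\[
|Lf|^2x^{\mu-1}=x^{\mu+1}|f''|^2+(\alpha+x)^2x^{\mu-1}|f'|^2+\beta^2x^{\mu-1}|f|^2+2(\alpha+x)x^{\mu}f''f'+2\beta x^{\mu}f''f+2\beta(\alpha+x)x^{\mu-1}f'f .
\]

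Each cross term is handled by one integration by parts:
\begin{itemize}
\item $2(\alpha+x)x^\mu f''f'=(\alpha x^\mu+x^{\mu+1})(|f'|^2)'$ produces $-\int(\alpha\mu x^{\mu-1}+(\mu+1)x^\mu)|f'|^2$.
\item $2\beta x^\mu f''f=2\beta x^\mu\big((f'f)'-|f'|^2\big)$ produces $-2\beta\mu\int x^{\mu-1}f'f-2\beta\int x^\mu|f'|^2$.
\item $\int 2x^{\mu-1}f'f=-(\mu-1)\int x^{\mu-2}|f|^2$, and $\int 2x^{\mu}f'f=-\mu\int x^{\mu-1}|f|^2$.
\end{itemize}
The first of these last two steps is where $\mu>1$ is needed: it kills the boundary term at $0$ and makes $M_-$ finite. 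All other boundary terms vanish on $\cD_\mu$ by density and \cite[Lemma 1]{HOT}, as stated before the proposition.

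Collecting coefficients gives the following.
\begin{itemize}
\item $x^{\mu+1}|f'|^2$: the coefficient is $1$, which matches $P_1$.
\item $x^{\mu-1}|f'|^2$: the coefficient is $\alpha^2-\alpha\mu=-\eta$, since $\alpha$ is a root of $\alpha^2-\mu\alpha+\eta=0$. This matches $-\eta J$.
\item $x^\mu|f'|^2$: the coefficient is $2\alpha-(\mu+1)-2\beta$. Since $2\alpha-\mu=\sqrt{\mu^2-4\eta}$ and $2\beta-\mu=\sqrt{\mu^2-4\epsilon}$, this equals $-(1+\mu+\sqrt{\mu^2-4\epsilon}-\sqrt{\mu^2-4\eta})$, matching the $P_0$ term.
\item $x^{\mu-1}|f|^2$: the coefficient is $\beta^2-\beta\mu=-\epsilon$, matching $P_1$.
\item $x^{\mu-2}|f|^2$: the cross term $2\beta(\alpha-\mu)x^{\mu-1}f'f$ contributes $-\beta(\alpha-\mu)(\mu-1)=\beta(\mu-1)(\mu-\alpha)$, matching the $M_-$ term.
\end{itemize}
This proves \eqref{extid}. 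For $\eta=0$ we have $\alpha=\mu$, so the $M_-$ coefficient vanishes and the only boundary term involving $|f|^2$ at $0$ is gone; hence \eqref{RemainderIdt} holds for all $\mu>0$.

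The inequality \eqref{ext} follows because the left-hand side of \eqref{extid} is nonnegative, and one checks that $\beta(\mu-\alpha)=\frac{\mu+\sqrt{\mu^2-4\epsilon}}{2}\cdot\frac{\mu-\sqrt{\mu^2-4\eta}}{2}$.

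For the equality case, equality holds exactly when $Lf=0$, that is, Kummer's equation
\[
xf''+(\alpha+x)f'+\beta f=0 .
\]
Substituting $f=e^{-x}g$ turns it into $xg''+(\alpha-x)g'-(\alpha-\beta)g=0$. Its solutions are $g={}_1F_1(\alpha-\beta,\alpha,x)$ and the second solution, which behaves like $x^{1-\alpha}$ near $0$ (or has a logarithmic term). The main obstacle is to show that this second solution is excluded from $\cD_\mu$. I would do this by checking the integrability of $|f'|^2x^{\mu-1}$ near $0$, using $\alpha\ge\mu/2$. I would also verify that $e^{-x}{}_1F_1$ decays at infinity like $x^{-\beta}$, so that it does lie in $\cD_\mu$. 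Setting $\eta=0$ gives $f=Cf_0$, which is Hamamoto's equality case.
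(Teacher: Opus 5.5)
Your proposal is correct and follows the paper's proof essentially step for step. You expand the square, integrate the cross terms by parts (needing $\mu>1$ only for the $x^{\mu-1}f'f$ term, which cancels when $\eta=0$), match coefficients through $\alpha^2-\mu\alpha=-\eta$ and $\beta^2-\mu\beta=-\epsilon$, and settle the equality case through Kummer's equation for $e^{x}f$, excluding the second solution because $J[f]=\infty$ near $0$. Your remark that $\alpha\ge\mu/2$ is what makes $|f'|^2x^{\mu-1}\sim x^{\mu-1-2\alpha}$ non-integrable at $0$ spells out a detail the paper leaves implicit.
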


\begin{proof}
Expanding the square and integrating by parts (the boundary terms vanish on $\cD_\mu$, see \cite[Lemma 1]{HOT}), we get
\begin{align*}
&\int_0^\infty\Big|xf''+(\alpha+x)f'+\beta f\Big|^2x^{\mu-1}dx\\
&=\int_0^\infty|f''|^2x^{\mu+1}dx+\alpha^2\int_0^\infty|f'|^2x^{\mu-1}dx+\int_0^\infty|f'|^2x^{\mu+1}dx+\beta^2\int_0^\infty|f|^2x^{\mu-1}dx\\
&\quad+2\alpha\int_0^\infty f''f'x^{\mu}dx+2\int_0^\infty f''f'x^{\mu+1}dx+2\beta\int_0^\infty f''f\,x^{\mu}dx\\
&\quad+2\alpha\int_0^\infty|f'|^2x^{\mu}dx+2\alpha\beta\int_0^\infty f'f\,x^{\mu-1}dx+2\beta\int_0^\infty f'f\,x^{\mu}dx\\
&=\int_0^\infty|f''|^2x^{\mu+1}dx+(\alpha^2-\alpha\mu)\int_0^\infty|f'|^2x^{\mu-1}dx+\int_0^\infty|f'|^2x^{\mu+1}dx\\
&\quad+(\beta^2-\beta\mu)\int_0^\infty|f|^2x^{\mu-1}dx-\big(\mu+1-2\alpha+2\beta\big)\int_0^\infty|f'|^2x^{\mu}dx\\
&\quad-\beta(\mu-1)(\alpha-\mu)\int_0^\infty|f|^2x^{\mu-2}dx .
\end{align*}
where the six cross terms were integrated by parts as follows:
\begin{align*}
2\alpha\int_0^\infty f''f'x^{\mu}dx&=\alpha\int_0^\infty(|f'|^2)'x^{\mu}dx=-\alpha\mu\int_0^\infty|f'|^2x^{\mu-1}dx,\\
2\int_0^\infty f''f'x^{\mu+1}dx&=-(\mu+1)\int_0^\infty|f'|^2x^{\mu}dx,\\
2\beta\int_0^\infty f''f\,x^{\mu}dx&=-2\beta\int_0^\infty|f'|^2x^{\mu}dx-2\beta\mu\int_0^\infty f'f\,x^{\mu-1}dx\\
&=-2\beta\int_0^\infty|f'|^2x^\mu dx+\beta\mu(\mu-1)\int_0^\infty|f|^2x^{\mu-2}dx,\\
2\alpha\beta\int_0^\infty f'f\,x^{\mu-1}dx&=-\alpha\beta(\mu-1)\int_0^\infty|f|^2x^{\mu-2}dx,\\
2\beta\int_0^\infty f'f\,x^{\mu}dx&=-\beta\mu\int_0^\infty|f|^2x^{\mu-1}dx ,
\end{align*}
while $2\alpha\int|f'|^2x^\mu$ is kept as it is. Choosing $\alpha$ and $\beta$ such that $\alpha^2-\alpha\mu=-\eta$ and $\beta^2-\beta\mu=-\epsilon$, i.e.\ as in the statement, and observing that $\mu+1-2\alpha+2\beta=1+\mu+\sqrt{\mu^2-4\epsilon}-\sqrt{\mu^2-4\eta}$, we obtain \eqref{extid}, and \eqref{ext} follows since the left-hand side is nonnegative. Equality holds if and only if $xf''+(\alpha+x)f'+\beta f=0$. Let $f(x)=e^{-x}\psi(x)$; then $f'=e^{-x}(\psi'-\psi)$, $f''=e^{-x}(\psi''-2\psi'+\psi)$, and $\psi$ solves Kummer's equation
\[
x\psi''+(\alpha-x)\psi'-(\alpha-\beta)\psi=0 ,
\]
whose solutions giving $f\in\cD_\mu$ are the multiples of ${}_1F_1(\alpha-\beta,\alpha,x)$ (the second solution behaves like $x^{1-\alpha}$ near $0$, and then $J[f]=\infty$); the corresponding $f$ belong to $\cD_\mu$, since $f(x)={}_1F_1(\beta,\alpha,-x)=O(x^{-\beta})$ at infinity. For $\eta=0$ no term with $x^{\mu-2}$ is produced, so the computation is valid for $\mu>0$; the equation $xf''+(x+\mu)f'+(\mu-b)f=0$ then gives $\psi={}_1F_1(b,\mu,x)$, i.e.\ $f=Cf_0$.
\end{proof}

It is worth noting that the coefficient of $\int|f|^2x^{\mu-2}$ in \eqref{ext} is nonpositive, so that \eqref{ext} does not reduce to a two-term inequality when $\eta>0$. Such perturbations of Hamamoto's functional arise, for instance, in the weighted uncertainty principles for solenoidal fields away from the Heisenberg point.

Let us explain how the operator $T$ of \eqref{RemainderIdt} is found, which is at the same time another way of organizing the computation of Proposition \ref{prop:ext}. One looks for the deficit of \eqref{Hmin} in sum form,
\[
R:=\int_0^\infty|f''|^2x^{\mu+1}dx+\int_0^\infty\big(x^2|f'|^2-\epsilon|f|^2\big)x^{\mu-1}dx-M\int_0^\infty|f'|^2x^{\mu}dx ,
\]
i.e.\ $R=\int_0^\infty x^{\mu-1}\big[x^2|f''|^2+x^2|f'|^2-\epsilon|f|^2-Mx|f'|^2\big]dx$,
with $M$ to be determined, in the form of the integral of a square, $R=\int_0^\infty x^{\mu-1}\big|xf''+Axf'+Bf+Cf'\big|^2dx$, with constants $A,B,C$. Expanding as in the proof of Proposition \ref{prop:ext} (with $A$, $C$, $B$ in place of $1$, $\alpha$, $\beta$),
\begin{align*}
&\int_0^\infty x^{\mu-1}\big|xf''+Axf'+Bf+Cf'\big|^2dx\\
&\quad=\int_0^\infty x^{\mu+1}|f''|^2dx+A^2\int_0^\infty x^{\mu+1}|f'|^2dx\\
&\qquad+(B^2-AB\mu)\int_0^\infty x^{\mu-1}|f|^2dx+(C^2-C\mu)\int_0^\infty x^{\mu-1}|f'|^2dx\\
&\qquad+\big(2AC-A(\mu+1)-2B\big)\int_0^\infty x^{\mu}|f'|^2dx\\
&\qquad+B(\mu-1)(\mu-C)\int_0^\infty x^{\mu-2}|f|^2dx ,
\end{align*}
so that one needs
\begin{gather*}
A^2=1,\qquad B^2-AB\mu=-\epsilon,\qquad C^2-C\mu=0,\\
-A(\mu+1)-2B+2AC=-M,\qquad B(\mu-1)(\mu-C)=0 .
\end{gather*}
For $\mu\ne1$ the third and fifth equations force $C=\mu$ (the other root $C=0$ of the third equation is excluded by the fifth, as $B\ne0$); then $A=1$, $B=\frac{\mu+\sqrt{\mu^2-4\epsilon}}{2}=\mu-b$ from the second equation (the other root $\frac{\mu-\sqrt{\mu^2-4\epsilon}}2=b$ would also do, giving the identity with $b$ and $\mu-b$ exchanged), and
\[
M=A(\mu+1)+2B-2AC=\mu+1+\mu+\sqrt{\mu^2-4\epsilon}-2\mu=1+\sqrt{\mu^2-4\epsilon} ,
\]
which is \eqref{RemainderIdt}. This also shows in what sense \eqref{RemainderIdt} is rigid: the sum-form deficit at the level $M$ is an exact square of a second order operator with constant coefficients (in the sense above) only for $M=1+\sqrt{\mu^2-4\epsilon}$.

The identity \eqref{RemainderIdt} contains the product form of Hamamoto's inequality, i.e.\ \cite[Theorem 1]{Ham24}, with an explicit remainder. Indeed, applying \eqref{RemainderIdt} to $f_\lambda(x):=f(\lambda x)$, $\lambda>0$, and using $\int|f_\lambda''|^2x^{\mu+1}=\lambda^{2-\mu}\int|f''|^2x^{\mu+1}$, $\int(x^2|f_\lambda'|^2-\epsilon|f_\lambda|^2)x^{\mu-1}=\lambda^{-\mu}\int(x^2|f'|^2-\epsilon|f|^2)x^{\mu-1}$, $\int|f_\lambda'|^2x^\mu=\lambda^{1-\mu}\int|f'|^2x^\mu$ and, on the right-hand side, $xf_\lambda''(x)+(x+\mu)f_\lambda'(x)+(\mu-b)f_\lambda(x)=\lambda(\lambda x)f''(\lambda x)+(\lambda x+\mu\lambda)f'(\lambda x)+(\mu-b)f(\lambda x)$, so that $\int x^{\mu-1}|Tf_\lambda|^2dx=\lambda^{-\mu}\int x^{\mu-1}|\lambda xf''+(x+\mu\lambda)f'+(\mu-b)f|^2dx$ after the change of variable $\lambda x\mapsto x$, we get, after multiplication by $\lambda^{\mu-1}$,
\begin{align*}
&\lambda\int_0^\infty|f''|^2x^{\mu+1}dx+\frac1\lambda\int_0^\infty\big(x^2|f'|^2-\epsilon|f|^2\big)x^{\mu-1}dx-\Big(1+\sqrt{\mu^2-4\epsilon}\Big)\int_0^\infty|f'|^2x^{\mu}dx\\
&\qquad=\frac1\lambda\int_0^\infty x^{\mu-1}\Big|\lambda xf''+(x+\mu\lambda)f'+(\mu-b)f\Big|^2dx .
\end{align*}
Choosing
\[
\lambda=\Bigg(\frac{\int_0^\infty\big(x^2|f'|^2-\epsilon|f|^2\big)x^{\mu-1}dx}{\int_0^\infty|f''|^2x^{\mu+1}dx}\Bigg)^{\frac12},
\]
for which the first two terms are equal, we obtain the exact remainder identity for the product form,
\begin{align}\label{prodid1d}
&\Big(\int_0^\infty|f''|^2x^{\mu+1}dx\Big)^{\frac12}\Big(\int_0^\infty\big(x^2|f'|^2-\epsilon|f|^2\big)x^{\mu-1}dx\Big)^{\frac12}-\frac{1+\sqrt{\mu^2-4\epsilon}}{2}\int_0^\infty|f'|^2x^{\mu}dx\notag\\
&\qquad=\frac{1}{2\lambda}\int_0^\infty x^{\mu-1}\Big|\lambda xf''+(x+\mu\lambda)f'+(\mu-b)f\Big|^2dx\ \ge0 ,
\end{align}
which is the statement that the minimum in \eqref{Hmin} is $\frac{(1+\sqrt{\mu^2-4\epsilon})^2}{4}$, with equality if and only if $\lambda xf''+(x+\mu\lambda)f'+(\mu-b)f=0$, i.e.\ $f=Cf_0(\cdot/\lambda)$. This is exactly the one-parameter family of squares of \cite{Ham24, HOT}, and it is the one-dimensional counterpart of the passage from the sum form to the product form of Section \ref{sec:prod}.

We now turn to the stability of \eqref{RemainderIdt}. For simplicity, we denote
\begin{equation}\label{TL}
Tf:=xf''+(x+\mu)f'+(\mu-b)f,\qquad Lf:=-xf''-\mu f',\qquad d\mu:=x^{\mu-1}dx ,
\end{equation}
so that \eqref{RemainderIdt} reads $Q[f]+P_1[f]-(1+\sqrt{\mu^2-4\epsilon})P_0[f]=\int_0^\infty|Tf|^2d\mu$, and
\[
\int_0^\infty fLf\,d\mu=\int_0^\infty|f'|^2x\,d\mu=P_0[f],\qquad
\int_0^\infty fLf_0\,d\mu=\int_0^\infty f'f_0'\,x\,d\mu .
\]
We have $Tf=0$ if and only if $f=Cf_0$. The following theorem, which is the one-dimensional result behind both groups of our main results, gives the stability estimate for the identity \eqref{RemainderIdt} with the sharp constant.

\begin{theorem}\label{thm:1d}
Let $\mu>0$ and $0<\epsilon<\frac{\mu^2}4$, and let $T$, $L$ be the operators in \eqref{TL}. Then
\begin{equation}\label{stab1d}
\int_0^\infty|Tf|^2d\mu\ \ge\ 2\int_0^\infty fLf\,d\mu
\end{equation}
for all $f\in\cD_\mu$ satisfying $\int_0^\infty fLf_0\,d\mu=0$. Moreover, the constant $2$ is optimal and is achieved at
\begin{align}\label{f1}
f_1(x)&:=(\mu-b)\,{}_1F_1(\mu-b+1,\mu,-x)-\frac{\mu-2b+1}{2}\,{}_1F_1(\mu-b,\mu,-x)\notag\\
&=-\frac12\Big[(1-\mu)f_0(x)-2xf_0'(x)\Big],
\end{align}
up to a constant factor. Consequently, for all $f\in\cD_\mu$,
\begin{align}\label{stab1dinf}
Q[f]+P_1[f]-\Big(1+\sqrt{\mu^2-4\epsilon}\Big)P_0[f]&=\int_0^\infty|Tf|^2d\mu\ \ge\ 2\inf_{C\in\R}\int_0^\infty|f'-Cf_0'|^2x\,d\mu\notag\\
&=2\Big[\int_0^\infty|f'|^2x\,d\mu-\frac{\big(\int_0^\infty f'f_0'x\,d\mu\big)^2}{\int_0^\infty|f_0'|^2x\,d\mu}\Big],
\end{align}
with equality exactly for $f$ in the linear span of $f_0$ and $f_1$.
\end{theorem}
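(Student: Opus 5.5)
The plan is to linearize $T$ by a Hankel transform of order $\mu-1$, which turns \eqref{stab1d} into the first-level case of Theorem \ref{TA}. This is the programme of Section \ref{sec:strategy}, carried out directly on the half-line.

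\textbf{Step 1: rewrite $T$ and $L$ as radial operators.} Put $x=r^2$, $G(r)=f(r^2)$, and view $G$ formally as a radial function in ``dimension'' $n=2\mu$, where $\mu$ may be non-integer. One computes $G''+\frac{n-1}{r}G'=4(xf''+\mu f')$ and $rG'=2xf'$. Hence
\[
Lf=-\tfrac14\Delta_nG,\qquad Tf=\tfrac14\Delta_nG+\tfrac12 rG'+(\mu-b)G .
\]
Under $d\mu=x^{\mu-1}dx=2r^{n-1}dr$ all integrals become radial $L^2$ integrals.

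\textbf{Step 2: pass to the Hankel transform.} Let $\widehat G(\rho)$ be the Hankel transform of order $\mu-1$, i.e. the radial Fourier transform in dimension $n$, defined for real $n$ through the Bessel kernel $\rho^{1-\mu}J_{\mu-1}$. It is unitary on $L^2(r^{n-1}dr)$. It satisfies $\Delta_n\mapsto-\rho^2$ and $r\partial_r\mapsto-(n+\rho\partial_\rho)$. Therefore
\[
\widehat{Tf}=-\tfrac12\big(\rho\widehat G'+(\tfrac{\rho^2}{2}+2b)\widehat G\big),\qquad \int fLf\,d\mu=\tfrac{c}{4}\int_0^\infty\rho^2|\widehat G|^2\rho^{n-1}d\rho ,
\]
with the same normalizing constant $c$ on both sides. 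The zero-order term has become $2b=2\mu-2(\mu-b)$, which is exactly the ``extraction of $|\xi|^\alpha$'' of Section \ref{sec:strategy}. Now substitute
\[
\widehat G=\rho^{-2b}e^{-\rho^2/4}w .
\]
Then $\rho\widehat G'+(\frac{\rho^2}{2}+2b)\widehat G=\rho^{-2b}e^{-\rho^2/4}\rho w'$. With $\alpha:=2\mu+2-4b=2+2\sqrt{\mu^2-4\epsilon}>0$ we obtain
\[
\int|Tf|^2d\mu=\tfrac c4\int_0^\infty|w'|^2d\gamma_\alpha,\qquad \int fLf\,d\mu=\tfrac c4\int_0^\infty|w|^2d\gamma_\alpha ,
\]
where $d\gamma_\alpha=e^{-\rho^2/2}\rho^{\alpha-1}d\rho$ as in Remark \ref{rem:GPchain}.

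\textbf{Step 3: identify constants with $f_0$ and apply Theorem \ref{TA}.} The solution set $Tf=0$ corresponds to $w'=0$, so $w\equiv1$ corresponds to $f_0$. One can also check this directly: the inverse Hankel transform of $\rho^{-2b}e^{-\rho^2/4}$ is a confluent hypergeometric function of $r^2$, which is ${}_1F_1(\mu-b,\mu,-x)=f_0$ up to a constant. The $L$-pairing becomes the $L^2(d\gamma_\alpha)$ pairing, since $\int fLf_0\,d\mu=\frac c4\int w\cdot1\,d\gamma_\alpha$. The hypothesis $\int fLf_0\,d\mu=0$ therefore means $w\perp1$. The first level of Theorem \ref{TA} then gives $\int|w'|^2d\gamma_\alpha\ge2\int|w|^2d\gamma_\alpha$, which is \eqref{stab1d}.

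Equality holds iff $w=a_0+a_1\rho^2$. Because of the orthogonality constraint, this means $w$ is the first Laguerre level $L_1^{(\alpha/2-1)}(\rho^2/2)$. To identify it I would use the dilation generator: $2xf_0'=rG_0'$ transforms into $-(n+\rho\partial_\rho)\widehat G_0=\big(-2\mu+2b+\tfrac{\rho^2}{2}\big)\widehat G_0$. Hence the span of $f_0$ and $xf_0'$ corresponds exactly to $w\in\mathrm{span}\{1,\rho^2\}$. The $L$-orthogonal combination is $f_1=-\frac12[(1-\mu)f_0-2xf_0']$, and the first expression in \eqref{f1} follows from the contiguous relations of ${}_1F_1$. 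For \eqref{stab1dinf}, write $f=Cf_0+g$ with $g$ $L$-orthogonal to $f_0$. Then $Tf=Tg$, and $\int gLg\,d\mu=\inf_C\int|f'-Cf_0'|^2x\,d\mu$ by the identity $\int fLf\,d\mu=\int|f'|^2x\,d\mu$. The equality cases are $f\in\mathrm{span}\{f_0,f_1\}$.

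\textbf{Main obstacle.} I expect the hard step to be the functional-analytic justification rather than the algebra. One must show that for $f\in\cD_\mu$ the Hankel transform is well defined and the operator rules hold without boundary terms. One must also show that $w=\rho^{2b}e^{\rho^2/4}\widehat G$ lies in $W^{1,2}(d\gamma_\alpha)$, which means controlling the behaviour near $\rho=0$, where $\widehat G$ may be singular because $\rho^2\widehat G\in L^2$ only. I would first prove everything for $f\in C_c^\infty([0,\infty))$, where $\widehat G$ is smooth and rapidly decaying. I would then note that both sides of \eqref{stab1d} are continuous in the $\cD_\mu$-norm, and conclude by density.
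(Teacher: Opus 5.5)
Your proposal is correct and is essentially the paper's proof. The passage to a radial problem in dimension $2\mu$, the Hankel transform of order $\mu-1$ turning $T$ into the first order operator $\rho\partial_\rho+\frac{\rho^2}{2}+2b$, and the gauge $\widehat G=\rho^{-2b}e^{-\rho^2/4}w$ reducing \eqref{stab1d} to the first level of Theorem \ref{TA} (with parameter $2+2\sqrt{\mu^2-4\epsilon}$) are the paper's Steps 1--2 plus the shortcut of Remark \ref{rem:chain}, your $\rho$ being the variable $s=2\rho$ used there; the only minor difference is that you identify $f_1$ from the dilation rule $r\partial_r\mapsto-(n+\rho\partial_\rho)$ applied to $\widehat G_0$, getting the second form of \eqref{f1} first, whereas the paper inverts the transform explicitly with the Bessel--Gaussian integral of Step 4 and then uses the contiguous relation to reach the second form.
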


\begin{proof}
Since $Tf_0=0$, \eqref{stab1dinf} follows from \eqref{stab1d} applied to $f-Cf_0$ with $C$ realizing the infimum, i.e.\ $C=\int f'f_0'x\,d\mu/\int|f_0'|^2x\,d\mu$, for which $\int(f-Cf_0)Lf_0\,d\mu=0$. We prove \eqref{stab1d}. The idea is to apply the Hankel transform to reduce the order of derivatives in the operators $T$ and $L$.

\emph{Step 1.} Setting $x=s^2/4$ and defining $F(s)=f(s^2/4)=f(x)$, we get $f'(x)=\frac2sF'(s)$ and $f''(x)=\frac4{s^2}F''(s)-\frac4{s^3}F'(s)$. Then the operator $T$ can be rewritten as
\[
Tf=F''(s)+\Big(\frac{2\mu-1}{s}+\frac s2\Big)F'(s)+(\mu-b)F(s),
\]
which implies that
\[
\int_0^\infty|Tf|^2d\mu=\frac{1}{2^{2\mu-1}}\int_0^\infty\Big|F''(s)+\Big(\frac{2\mu-1}{s}+\frac s2\Big)F'(s)+(\mu-b)F(s)\Big|^2s^{2\mu-1}ds ,
\]
and likewise
\[
\int_0^\infty fLf\,d\mu=\int_0^\infty|f'|^2x\,d\mu=\frac{1}{2^{2\mu-1}}\int_0^\infty|F'|^2s^{2\mu-1}ds .
\]

\emph{Step 2: the Hankel transform.} We recall some properties of the Hankel transform that are useful in our arguments (see \cite[Chapter XIV]{Watson}). The Hankel transform of order $\nu>-1$ of a function $\phi$, given by
\[
H_\nu\phi(\rho):=\int_0^\infty\phi(s)J_\nu(\rho s)\,s\,ds ,
\]
satisfies $\int_0^\infty H_\nu\phi\,H_\nu\psi\,\rho\,d\rho=\int_0^\infty\phi\psi\,s\,ds$ (Parseval's identity) and $H_\nu^{-1}=H_\nu$ (Hankel's inversion theorem), first on $C_c^\infty(0,\infty)$ and then by density on $L^2((0,\infty),s\,ds)$. We shall also use the following two elementary rules. First, since the Bessel operator $\phi\mapsto\phi''+\frac1s\phi'-\frac{\nu^2}{s^2}\phi=\frac1s(s\phi')'-\frac{\nu^2}{s^2}\phi$ is symmetric with respect to the measure $s\,ds$, and since $J_\nu(\rho s)$ satisfies the Bessel equation $J_\nu''(t)+\frac1tJ_\nu'(t)+(1-\frac{\nu^2}{t^2})J_\nu(t)=0$ with $t=\rho s$, two integrations by parts give
\[
H_\nu\Big(\phi''+\frac1s\phi'-\frac{\nu^2}{s^2}\phi\Big)(\rho)=\int_0^\infty\phi(s)\Big(\frac{d^2}{ds^2}+\frac1s\frac{d}{ds}-\frac{\nu^2}{s^2}\Big)J_\nu(\rho s)\,s\,ds=-\rho^2H_\nu\phi(\rho),
\]
the boundary terms vanishing for $\phi\in C_c^\infty(0,\infty)$. Second, since $H_\nu$ intertwines the dilations, $H_\nu\big(\phi(\lambda\,\cdot)\big)(\rho)=\lambda^{-2}H_\nu\phi(\rho/\lambda)$, differentiating this identity with respect to $\lambda$ at $\lambda=1$ gives
\[
H_\nu(s\phi')(\rho)=-2H_\nu\phi(\rho)-\rho\frac{d}{d\rho}H_\nu\phi(\rho).
\] By Parseval's identity applied to $s^\nu F$,
\[
\int_0^\infty|F(s)|^2s^{2\nu+1}ds=\int_0^\infty|v(\rho)|^2\rho^{2\nu+1}d\rho,\qquad
v(\rho):=\rho^{-\nu}\int_0^\infty F(s)J_\nu(s\rho)s^{\nu+1}ds ,
\]
which we call the extended Hankel transform of $F$, and likewise for the bilinear form. Denote $n=2\mu=2\nu+2$, i.e.\ $\nu=\mu-1$, and $\beta=\mu-b$. Set $g=s^\nu F$; then
\[
s^\nu D_nF:=s^\nu\Big(F''+\frac{n-1}{s}F'\Big)=g''+\frac1sg'-\frac{\nu^2}{s^2}g=:B_\nu g ,
\]
where $B_\nu$ is the Bessel operator; the identity $s^\nu D_nF=B_\nu(s^\nu F)$ follows from $(s^\nu F)'=s^\nu F'+\nu s^{\nu-1}F$ and $(s^\nu F)''=s^\nu F''+2\nu s^{\nu-1}F'+\nu(\nu-1)s^{\nu-2}F$, the terms in $F$ cancelling because $\nu(\nu-1)+\nu-\nu^2=0$ and those in $F'$ giving $(2\nu+1)s^{\nu-1}F'=(n-1)s^{\nu-1}F'$. By the first rule above, $H_\nu(B_\nu g)=-\rho^2H_\nu g$, i.e.\ $H_\nu(s^\nu D_nF)=-\rho^{2+\nu}v$: the extended Hankel transform of $D_nF$ is $-\rho^2v$. Moreover $s^\nu(sF')=sg'-\nu g$ and $H_\nu g=\rho^\nu v$, and by the second rule,
\[
H_\nu\big(s^\nu(sF')\big)=H_\nu(sg'-\nu g)=-2\rho^\nu v-\rho(\rho^\nu v)'-\nu\rho^\nu v=\rho^\nu(-nv-\rho v') ,
\]
so the extended Hankel transform of $sF'$ is $-nv-\rho v'$. This allows us to compute the extended Hankel transform of $Tf=D_nF+\frac s2F'+\beta F$, which is
\[
-\rho^2v-\frac n2v-\frac\rho2v'+\beta v=-\Big(\frac\rho2v'+\rho^2v+bv\Big),
\]
and therefore
\begin{equation}\label{TH}
\int_0^\infty|Tf|^2d\mu=\frac{1}{2^{2\mu-1}}\int_0^\infty\Big|\frac\rho2v'+\rho^2v+bv\Big|^2\rho^{2\mu-1}d\rho .
\end{equation}
Since
\begin{align*}
\int_0^\infty|F'|^2s^{2\mu-1}ds&=-\int_0^\infty F\,(s^{n-1}F')'ds=-\int_0^\infty(s^\nu F)(s^\nu D_nF)\,s\,ds\\
&=-\int_0^\infty H_\nu(s^\nu F)H_\nu(s^\nu D_nF)\,\rho\,d\rho=\int_0^\infty|v|^2\rho^{2\mu+1}d\rho ,
\end{align*}
we obtain
\begin{equation}\label{LH}
\int_0^\infty fLf\,d\mu=\frac{1}{2^{2\mu-1}}\int_0^\infty|v|^2\rho^{2\mu+1}d\rho ,
\qquad
\int_0^\infty fLf_0\,d\mu=\frac{1}{2^{2\mu-1}}\int_0^\infty v\,v_0\,\rho^{2\mu+1}d\rho ,
\end{equation}
where $v_0$ is the extended Hankel transform of $F_0(s)=f_0(s^2/4)$. Since $Tf_0=0$, \eqref{TH} gives $\frac\rho2v_0'+\rho^2v_0+bv_0=0$, i.e.\ $v_0=C\rho^{-2b}e^{-\rho^2}$.

\emph{Step 3: reduction to a Laguerre expansion.} By \eqref{TH} and \eqref{LH}, it suffices to establish
\begin{equation}\label{stabEst}
\int_0^\infty\Big|\frac\rho2v'+\rho^2v+bv\Big|^2\rho^{2\mu-1}d\rho\ \ge\ K\int_0^\infty|v|^2\rho^{2\mu+1}d\rho,
\qquad\text{where}\quad\int_0^\infty v\,\rho^{2\mu-2b+1}e^{-\rho^2}d\rho=0 ,
\end{equation}
with $K=2$. Setting $t=\rho^2$ and $w(t)=v(\sqrt t)$, so that $w_0(t)=Ct^{-b}e^{-t}$, the estimate \eqref{stabEst} becomes
\begin{equation}\label{stabEst2}
\int_0^\infty|tw'+(t+b)w|^2t^{\mu-1}dt\ \ge\ K\int_0^\infty|w|^2t^{\mu}dt .
\end{equation}
Since $tw_0'+(t+b)w_0=0$, writing $w=w_0h$ we have
\[
tw'+(t+b)w=\big(tw_0'+(t+b)w_0\big)h+tw_0h'=tw_0h' ,
\]
which implies that \eqref{stabEst2} is equivalent to
\[
\int_0^\infty|h'|^2t^{\mu-2b+1}e^{-2t}dt\ \ge\ K\int_0^\infty|h|^2t^{\mu-2b}e^{-2t}dt
\]
for all $h$ such that $\int_0^\infty h\,t^{\mu-2b}e^{-2t}dt=0$. With $z=2t$ and $g(z)=h(z/2)$, the above estimate turns into
\begin{equation}\label{stabEst3}
\int_0^\infty|g'|^2z^{\mu-2b+1}e^{-z}dz\ \ge\ \frac K2\int_0^\infty|g|^2z^{\mu-2b}e^{-z}dz ,
\end{equation}
where $\int_0^\infty g\,z^{\mu-2b}e^{-z}dz=0$. With $\mu-2b=\frac\gamma2-1$, that is, $\frac\gamma2-1=\sqrt{\mu^2-4\epsilon}$, we decompose $g$ in generalized Laguerre polynomials,
\[
g(z)=\sum_{k=0}^\infty a_kL_k^{\frac\gamma2-1}(z) .
\]
Hence $g'(z)=-\sum_{k=1}^\infty a_kL_{k-1}^{\gamma/2}(z)$, which by the orthogonality relations of the Laguerre polynomials \cite[Chapter V]{Szego} gives
\[
\int_0^\infty|g'|^2z^{\gamma/2}e^{-z}dz=\sum_{k=1}^\infty k|a_k|^2\frac{\Gamma(k+\frac\gamma2)}{k!},\qquad
\int_0^\infty|g|^2z^{\gamma/2-1}e^{-z}dz=\sum_{k=0}^\infty|a_k|^2\frac{\Gamma(k+\frac\gamma2)}{k!} .
\]
Since $\int_0^\infty g\,z^{\mu-2b}e^{-z}dz=0$, we have $a_0=0$. This implies that \eqref{stabEst3} holds with $K=2$, and it is sharp with the equality case $g=a_1L_1^{\frac\gamma2-1}$; it also shows that the values of the quotient in \eqref{stabEst3} at its critical points are the numbers $k=1,2,\dots$, attained at $L_k^{\frac\gamma2-1}$, which are mutually orthogonal.

\emph{Step 4: the extremal.} In the equality case, using $L_1^{\frac\gamma2-1}(z)=\frac\gamma2-z$, we get
\[
v(\rho)=a_1w_0(\rho^2)L_1^{\frac\gamma2-1}(2\rho^2)=C\rho^{-2b}e^{-\rho^2}\Big(\frac\gamma2-2\rho^2\Big)=Ce^{-\rho^2}\big(\rho^{2-2b}-A\rho^{-2b}\big),
\]
where $A=\frac{\mu-2b+1}{2}$.
Since the Hankel transform is self-reciprocal, we have
\begin{align*}
F(s)&=s^{1-\mu}\int_0^\infty v(\rho)J_{\mu-1}(s\rho)\rho^{\mu}d\rho\\
&=Cs^{1-\mu}\Big(\int_0^\infty\rho^{\mu+2-2b}e^{-\rho^2}J_{\mu-1}(s\rho)d\rho-A\int_0^\infty\rho^{\mu-2b}e^{-\rho^2}J_{\mu-1}(s\rho)d\rho\Big).
\end{align*}
Applying the formula \cite[13.3]{Watson}, valid for $\alpha>0$ and $m+\nu>-1$,
\[
\int_0^\infty x^{m}e^{-\alpha x^2}J_\nu(\beta x)dx=\frac{\beta^\nu\,\Gamma\big(\frac{\nu+m+1}{2}\big)}{2^{\nu+1}\alpha^{\frac{\nu+m+1}{2}}\Gamma(\nu+1)}\,{}_1F_1\Big(\frac{\nu+m+1}{2};\nu+1;-\frac{\beta^2}{4\alpha}\Big)
\]
we get
\begin{align*}
\int_0^\infty\rho^{\mu+2-2b}e^{-\rho^2}J_{\mu-1}(s\rho)d\rho&=\frac{s^{\mu-1}\Gamma(\mu-b+1)}{2^\mu\Gamma(\mu)}{}_1F_1\Big(\mu-b+1,\mu,-\frac{s^2}4\Big),\\
\int_0^\infty\rho^{\mu-2b}e^{-\rho^2}J_{\mu-1}(s\rho)d\rho&=\frac{s^{\mu-1}\Gamma(\mu-b)}{2^\mu\Gamma(\mu)}{}_1F_1\Big(\mu-b,\mu,-\frac{s^2}4\Big).
\end{align*}
Then
\[
F(s)=\frac{C\Gamma(\mu-b)}{2^\mu\Gamma(\mu)}\Big[(\mu-b)\,{}_1F_1\Big(\mu-b+1,\mu,-\frac{s^2}4\Big)-A\,{}_1F_1\Big(\mu-b,\mu,-\frac{s^2}4\Big)\Big],
\]
which implies that, up to a constant factor, $f=f_1$ in the first form of \eqref{f1}. For the second form, put $a_1:=\mu-b$ and $z:=-x$, so that $f_0(x)={}_1F_1(a_1,\mu,z)$ and, by the derivative formula $\frac{d}{dz}{}_1F_1(a,c,z)=\frac ac\,{}_1F_1(a+1,c+1,z)$, $f_0'(x)=-\frac{a_1}{\mu}{}_1F_1(a_1+1,\mu+1,z)$. The contiguous relation $z\,{}_1F_1(a+1,c+1,z)=c\big[{}_1F_1(a+1,c,z)-{}_1F_1(a,c,z)\big]$ \cite[Chapter 13]{AS} then gives
\[
-2xf_0'(x)=-\frac{2a_1}{\mu}\,z\,{}_1F_1(a_1+1,\mu+1,z)=-2a_1\Big[{}_1F_1(a_1+1,\mu,z)-{}_1F_1(a_1,\mu,z)\Big],
\]
hence
\[
-\frac12\Big[(1-\mu)f_0-2xf_0'\Big]=a_1\,{}_1F_1(a_1+1,\mu,z)-\frac{1-\mu+2a_1}{2}\,{}_1F_1(a_1,\mu,z),
\]
and $1-\mu+2a_1=\mu-2b+1$, which is the first form of \eqref{f1}. The same formula with $m=\mu-2b$ alone shows that the extended Hankel transform of $F_0$ is $w_0(\rho^2)$, as used above.
\end{proof}

For later use we record the effect of dilations. For $\tau>0$ and $f_\tau(x):=f(x/\tau)$ one has, by the change of variable $x=\tau y$,
\begin{equation}\label{scaling}
Q[f_\tau]=\tau^{\mu-2}Q[f],\qquad P_1[f_\tau]=\tau^{\mu}P_1[f],\qquad P_0[f_\tau]=\tau^{\mu-1}P_0[f] .
\end{equation}
Consequently, for every $\lambda>0$, applying Theorem \ref{thm:1d} to $f_{1/\lambda}=f(\lambda\,\cdot)$ and using $f_{1/\lambda}-Cf_0=(f-Cf_0(\cdot/\lambda))_{1/\lambda}$,
\begin{align*}
&\lambda^2Q[f]+P_1[f]-\lambda\Big(1+\sqrt{\mu^2-4\epsilon}\Big)P_0[f]\\
&\qquad=\lambda^{\mu}\Big(Q[f_{1/\lambda}]+P_1[f_{1/\lambda}]-\Big(1+\sqrt{\mu^2-4\epsilon}\Big)P_0[f_{1/\lambda}]\Big)\\
&\qquad\ge\ 2\lambda^{\mu}\inf_{C}P_0\big[f_{1/\lambda}-Cf_0\big]=2\lambda\inf_CP_0\big[f-Cf_0(\cdot/\lambda)\big],
\end{align*}
i.e.\ Theorem \ref{thm:1d} holds at every scale, with the extremal $f_0(\cdot/\lambda)$ and the remainder $2\lambda\inf_CP_0[f-Cf_0(\cdot/\lambda)]$ for the functional $\lambda^2Q+P_1$; in Section \ref{sec:sol} we meet the case $\lambda=2$, for which the functional is $4Q+P_1$, the extremal is $f_0(x/2)=e^{-x/2}{}_1F_1(b,\mu,x/2)$ and the remainder constant is $4$.

\begin{remark}[A shortcut, higher levels and the complete chain]\label{rem:chain}
Step 3 of the proof can be replaced by a direct reduction to the weighted Gaussian Poincar\'e inequality of Theorem \ref{TA}, which avoids the Laguerre expansion altogether. Indeed, writing
\begin{equation}\label{gauge}
v(\rho)=\eta(\rho)\,e^{-\rho^2}\rho^{-2b} ,
\end{equation}
that is, $v=\eta\,v_0$ with the function $v_0$ of Step 2, we have $v'=\eta'e^{-\rho^2}\rho^{-2b}-2\rho v-\frac{2b}{\rho}v$, so that the three terms of \eqref{stabEst} combine into
\begin{equation}\label{gaugeid}
\frac\rho2v'(\rho)+\rho^2v(\rho)+bv(\rho)=\frac\rho2\,\eta'(\rho)\,e^{-\rho^2}\rho^{-2b} ,
\end{equation}
the terms $\rho^2v$ and $bv$ cancelling exactly against the two terms produced by the derivative of $e^{-\rho^2}\rho^{-2b}$. Hence \eqref{stabEst} is equivalent to
\[
\frac14\int_0^\infty|\eta'|^2e^{-2\rho^2}\rho^{2\mu-4b+1}d\rho\ \ge\ K\int_0^\infty|\eta|^2e^{-2\rho^2}\rho^{2\mu-4b+1}d\rho ,
\]
under the constraint $\int_0^\infty\eta\,e^{-2\rho^2}\rho^{2\mu-4b+1}d\rho=0$, both sides carrying the same weight. Putting $s=2\rho$ and $w(s)=\eta(s/2)$, so that $\eta'(\rho)=2w'(s)$ and $e^{-2\rho^2}=e^{-s^2/2}$, this becomes
\[
\int_0^\infty|w'|^2e^{-\frac{s^2}{2}}s^{\gamma-1}ds\ \ge\ K\int_0^\infty|w|^2e^{-\frac{s^2}{2}}s^{\gamma-1}ds,\qquad\int_0^\infty w\,e^{-\frac{s^2}{2}}s^{\gamma-1}ds=0 ,
\]
with $\gamma:=2\mu-4b+2=2\omega+2>0$, where $\omega:=\sqrt{\mu^2-4\epsilon}=\mu-2b$; this is the same parameter $\gamma$ as in Step 3. The last display is exactly Theorem \ref{TA} for the parameter $\gamma$, which holds with $K=2$, the constant being sharp and attained by $w=a_0+a_1s^2$; under the orthogonality constraint, $w$ is a multiple of $L_1^{\frac\gamma2-1}(\frac{s^2}{2})$, in agreement with Step 4.

The proof shows more. The values of the quotient $\frac{Q[f]+P_1[f]}{P_0[f]}$ at its critical points are exactly the numbers $1+\omega+2k$, $k=0,1,2,\dots$, the value of the level $k$ being attained precisely at the multiples of
\begin{equation}\label{fk1d}
f_k(x):=\sum_{m=0}^k\binom{k+\omega}{k-m}\frac{(-2)^m}{m!}\,\Gamma(\mu-b+m)\ {}_1F_1(\mu-b+m,\mu,-x) ,
\end{equation}
the functions of different levels being orthogonal with respect to the bilinear form $(f,u)\mapsto\int fLu\,d\mu$; for $k=0$ and $k=1$ these are $f_0$ and $f_1$ up to normalization. Indeed, the level $k$ corresponds to $g=L_k^{\frac\gamma2-1}$ in Step 3, i.e.\ to $v=w_0(\rho^2)L_k^{\frac\gamma2-1}(2\rho^2)$, and \eqref{fk1d} is obtained by applying the formula of Step 4 to each monomial of $L_k^{\frac\gamma2-1}$.

Correspondingly, the estimate \eqref{stab1d} is the first step of a chain of arbitrary length, all of whose constants equal $2$: writing $E_i:=\operatorname{span}\{f_0,f_1,\dots,f_i\}$, one has, for every $n\ge1$ and every $f\in\cD_\mu$,
\begin{equation}\label{stab1dn}
\int_0^\infty|Tf|^2d\mu\ \ge\ 2\sum_{i=1}^{n}\ \inf_{u\in E_{i-1}}\int_0^\infty|f'-u'|^2x\,d\mu ,
\end{equation}
the $i$-th term being the squared distance from $f'$ to the derivatives of $E_{i-1}$; the case $n=1$ is \eqref{stab1dinf}. All the constants are optimal, in the sense that none of them can be increased while the others are kept fixed, the $i$-th one being achieved at $f=f_i$, and equality holds in \eqref{stab1dn} exactly for $f\in E_n$. Moreover the chain is complete: letting $n\to\infty$ gives the identity
\begin{equation}\label{stab1dinfty}
\int_0^\infty|Tf|^2d\mu\ =\ 2\sum_{i=1}^{\infty}\ \inf_{u\in E_{i-1}}\int_0^\infty|f'-u'|^2x\,d\mu ,\qquad f\in\cD_\mu .
\end{equation}
These statements are Remark \ref{rem:GPchain} read through the dictionary of Step 3: the space $E_{i-1}$ corresponds to $\cP_{i-1}$, i.e.\ to the Laguerre levels $j\le i-1$ of $g$, and the two members of \eqref{stab1dn} correspond to those of \eqref{GPchain}.
\end{remark}

It is worth noting that the Hankel transform explains in one line why the minimization problem \eqref{Hmin} can be solved explicitly: it turns the fourth order operator $T$ into a first order one, and the quotient into the Gaussian Poincar\'e inequality of Theorem \ref{TA}.

\subsection{Change of functions}
The following elementary identities will be used repeatedly. Let $m,\gamma,a,b\in\R$ and $f\in C_0^\infty(0,\infty)$, and write $f(r)=r^\gamma g(r)$. Since $f'=r^\gamma\big(g'+\frac\gamma rg\big)$ and $f''=r^\gamma\big(g''+\frac{2\gamma}rg'+\frac{\gamma(\gamma-1)}{r^2}g\big)$, one has
\[
f''+\frac{m-1}{r}f'-\gamma(m+\gamma-2)\frac{f}{r^2}=r^\gamma\Big(g''+\frac{m+2\gamma-1}{r}g'\Big),
\]
the coefficient of $g/r^2$ being $\gamma(\gamma-1)+\gamma(m-1)-\gamma(m+\gamma-2)=0$. Squaring and integrating against $r^{m-2a-1}$, the cross term is
\begin{align*}
&2(m+2\gamma-1)\int_0^\infty g''g'\,r^{m+2\gamma-2a-2}dr=(m+2\gamma-1)\int_0^\infty(|g'|^2)'r^{m+2\gamma-2a-2}dr\\
&\qquad=-(m+2\gamma-1)(m+2\gamma-2a-2)\int_0^\infty|g'|^2r^{m+2\gamma-2a-3}dr ,
\end{align*}
and $(m+2\gamma-1)^2-(m+2\gamma-1)(m+2\gamma-2a-2)=(1+2a)(m+2\gamma-1)$. Similarly $|f'|^2+\gamma(m+\gamma-2)\frac{|f|^2}{r^2}=r^{2\gamma}\big(|g'|^2+\frac{2\gamma}rgg'+\frac{\gamma^2+\gamma(m+\gamma-2)}{r^2}|g|^2\big)$, and integrating the middle term by parts against $r^{m-2b-1}$ gives $-\gamma(m+2\gamma-2b-2)\int|g|^2r^{m+2\gamma-2b-3}$, so that the coefficient of $\int|g|^2r^{m+2\gamma-2b-3}$ is $\gamma^2+\gamma(m+\gamma-2)-\gamma(m+2\gamma-2b-2)=2\gamma b$. We have obtained
\begin{align}
&\int_0^\infty\Big|f''+\frac{m-1}rf'-\gamma(m+\gamma-2)\frac{f}{r^2}\Big|^2r^{m-2a-1}dr\notag\\
&\qquad=\int_0^\infty|g''|^2r^{m+2\gamma-2a-1}dr+(1+2a)(m+2\gamma-1)\int_0^\infty|g'|^2r^{m+2\gamma-2a-3}dr,\label{iden1}\\
&\int_0^\infty\Big(|f'|^2+\gamma(m+\gamma-2)\frac{|f|^2}{r^2}\Big)r^{m-2b-1}dr\notag\\
&\qquad=\int_0^\infty|g'|^2r^{m+2\gamma-2b-1}dr+2\gamma b\int_0^\infty |g|^2r^{m+2\gamma-2b-3}dr .\label{iden2}
\end{align}
Next, for $1+a-b>0$ we make the change of variable and of function
\begin{equation}\label{s}
s=\frac{r^{1+a-b}}{1+a-b},\qquad h(s)=g(r),
\end{equation}
so that $g'(r)=h'(s)r^{a-b}$ and $g''(r)=\big(h''(s)+\frac{(a-b)\,h'(s)}{(1+a-b)s}\big)r^{2(a-b)}$.
Write $\kappa:=1+a-b$, so that $r^\kappa=\kappa s$, $dr=(\kappa s)^{\frac1\kappa-1}ds$ and, for every $p>-1$,
\begin{equation}\label{rpower}
\int_0^\infty\phi\,r^{p}dr=\kappa^{\frac{p+1}{\kappa}-1}\int_0^\infty\phi\,s^{\frac{p+1}{\kappa}-1}ds .
\end{equation}
Setting $\theta:=\frac{m+2\gamma+a-3b-1}{\kappa}$, one has $\frac{m+2\gamma-2a-4}{\kappa}=\theta-3$ and $\frac{m+2\gamma-2b-2}{\kappa}=\theta-1$, and \eqref{rpower} gives the four identities
\begin{align*}
\int_0^\infty|g''|^2r^{m+2\gamma-2a-1}dr&=(1+a-b)^{\theta}\int_0^\infty|h''|^2s^{\theta}ds\\
&\qquad-(1+a-b)^{\theta-2}(a-b)(m+2\gamma-a-b-2)\int_0^\infty|h'|^2s^{\theta-2}ds,\\
\int_0^\infty|g'|^2r^{m+2\gamma-2a-3}dr&=(1+a-b)^{\theta-2}\int_0^\infty|h'|^2s^{\theta-2}ds,\\
\int_0^\infty|g'|^2r^{m+2\gamma-2b-1}dr&=(1+a-b)^{\theta}\int_0^\infty|h'|^2s^{\theta}ds,\\
\int_0^\infty |g|^2r^{m+2\gamma-2b-3}dr&=(1+a-b)^{\theta-2}\int_0^\infty |h|^2s^{\theta-2}ds .
\end{align*}
Indeed, the last three follow directly from \eqref{rpower} and $g'(r)=h'(s)r^{\kappa-1}$; for the first one, $|g''|^2=|h''|^2r^{4\kappa-4}+2(\kappa-1)h''h'r^{3\kappa-4}+(\kappa-1)^2|h'|^2r^{2\kappa-4}$, the three terms contribute $\kappa^\theta\int|h''|^2s^\theta$, $(\kappa-1)\kappa^{\theta-1}\int(|h'|^2)'s^{\theta-1}ds=-(\kappa-1)(\theta-1)\kappa^{\theta-1}\int|h'|^2s^{\theta-2}$ and $(\kappa-1)^2\kappa^{\theta-2}\int|h'|^2s^{\theta-2}$ respectively, and $(\kappa-1)\big[(\kappa-1)-\kappa(\theta-1)\big]=-(a-b)(m+2\gamma-a-b-2)$. Combining these with \eqref{iden1} and \eqref{iden2}, and using $(1+2a)(m+2\gamma-1)-(a-b)(m+2\gamma-a-b-2)=(1+a+b)(m+2\gamma+a-b-1)$, we arrive at the key identity
\begin{align}\label{keyiden}
&\int_0^\infty\Big|f''+\frac{m-1}rf'-\gamma(m+\gamma-2)\frac{f}{r^2}\Big|^2r^{m-2a-1}dr\notag\\
&\qquad+\int_0^\infty\Big(|f'|^2+\gamma(m+\gamma-2)\frac{|f|^2}{r^2}\Big)r^{m-2b-1}dr\notag\\
&\quad=(1+a-b)^{\theta}\Big[\int_0^\infty|h''|^2s^{\theta}ds+\int_0^\infty|h'|^2s^{\theta}ds\notag\\
&\qquad\qquad+\frac{(1+a+b)(m+2\gamma+a-b-1)}{(1+a-b)^2}\int_0^\infty|h'|^2s^{\theta-2}ds\notag\\
&\qquad\qquad+\frac{2\gamma b}{(1+a-b)^2}\int_0^\infty |h|^2s^{\theta-2}ds\Big].
\end{align}
It is worth noting that the third integral on the right-hand side carries the factor $1+a+b$: on the line $1+a+b=0$ it disappears, and \eqref{keyiden} becomes exactly the functional $Q+P_1$ of \eqref{QP} with $\mu=\theta-1$. This is the reason why the line $1+a+b=0$ is the natural setting of Theorem \ref{thm:2nd}.

\section{Solenoidal fields: the Fourier reduction of the poloidal blocks}\label{sec:sol}

\subsection{The poloidal--toroidal decomposition}
We recall the framework of \cite{Ham23}, to which we refer for the details and for the precise function spaces. Let $\sigma=x/r$, $\partial_r=\sigma\cdot\nabla$, $\partial_r'=\partial_r+\frac{N-1}{r}$, and let $\nabla_\sigma$ and $\Delta_\sigma$ be the gradient and the Laplace--Beltrami operator on $\bS^{N-1}$. A vector field $\bU$ is \emph{toroidal} if $\bU\cdot x=0$ and $\dv \bU=0$, and \emph{poloidal} if it is solenoidal and of the form
\[
\bU=\bD f:=\big(\sigma\Delta_\sigma-r\partial_r'\nabla_\sigma\big)f
\]
for a scalar function $f$ with $\int_{\bS^{N-1}}f(r\sigma)d\sigma=0$ for all $r>0$, the \emph{poloidal potential}. Let us recall the main features of this decomposition, all proved in \cite[Section 2]{Ham23} and the references therein. Every vector field on $\R^N\setminus\{0\}$ splits as $\bU=\sigma U_R+\bU_S$ with $U_R=\sigma\cdot\bU$ its radial component and $\bU_S=\bU-\sigma U_R$ its spherical part, tangent to the spheres. A field is called \emph{pre-poloidal} if its spherical part is a spherical gradient, $\bU_S=\nabla_\sigma g$ for some scalar $g$, and \emph{poloidal} if it is pre-poloidal and solenoidal; it is \emph{toroidal} if $U_R=0$ and $\dv\bU=0$. The poloidal fields are exactly the fields $\bD f$ above, and the potential $f=\Delta_\sigma^{-1}U_R$ is determined by $\bU$. The basic facts are: (a) every smooth solenoidal field on $\R^N$ decomposes uniquely as $\bU=\bU_P+\bU_T$ with $\bU_P$ poloidal and $\bU_T$ toroidal; (b) for a poloidal $\bV$ and a toroidal $\bW$, $\int_{\bS^{N-1}}\bV\cdot\bW\,d\sigma=\int_{\bS^{N-1}}\nabla\bV\cdot\nabla\bW\,d\sigma=0$ on every sphere; (c) the spaces of poloidal and toroidal fields are stable under multiplication by radial functions, under $\partial_r$ and under $\Delta_\sigma$, so that they are also stable under the expansion in spherical harmonics, and the poloidal fields whose potential lies in different spherical modes are again orthogonal, with their gradients, on every sphere.  Expanding the poloidal potential in spherical harmonics, $f=\sum_{k\ge1}\sum_lf_{kl}(r)\phi_{kl}(\sigma)$, one has $\bU_P=\sum_{k,l}\bU_{kl}$ with $\bU_{kl}=\bD(f_{kl}\phi_{kl})$, and again all the fields $\bU_{kl}$, $\bU_T$ and their gradients are mutually orthogonal in $L^2(\bS^{N-1})$. We recall also the following elementary fact, which is used in Section \ref{sec:tor}: a toroidal field has vanishing spherical means, $\int_{\bS^{N-1}}\bU(r\sigma)d\sigma=0$ for all $r>0$. Indeed, $\partial_j(x_iU_j)=U_i+x_i\dv\bU=U_i$, so by the divergence theorem $\int_{|x|<r}U_i\,dx=\int_{|x|=r}x_i\,\bU\cdot\sigma\,d\sigma=0$ for all $r$, and differentiating in $r$ gives the claim. Consequently the three integrals of \eqref{HUPsol}, and therefore the deficit \eqref{deficit}, split as sums over the blocks:
\begin{equation}\label{split}
\delta_{2,N}(\bU)=\delta_{2,N}(\bU_T)+\sum_{k\ge1}\sum_l\delta_{2,N}(\bU_{kl}).
\end{equation}

\subsection{The one-dimensional representation of a block}
Fix $k\ge1$ and $l$, and write $f_{kl}(r)=r^{k-1}g_{kl}(r)$. Hamamoto \cite[Section 4]{Ham23} computed the three integrals of a block in terms of $g_{kl}$: there is a constant $c_k>0$ depending only on $N$ and $k$ such that
\begin{align}
\int_{\R^N}|\bU_{kl}|^2|x|^{\beta}dx&=c_k|\bS^{N-1}|\Big[\int_0^\infty|g_{kl}'|^2r^{N+2k+\beta-1}dr\notag\\
&\qquad\qquad-(N+k-2)\beta\int_0^\infty |g_{kl}|^2r^{N+2k+\beta-3}dr\Big],\label{Ubeta}\\
\int_{\R^N}|\nabla \bU_{kl}|^2dx&=c_k|\bS^{N-1}|\Big[\int_0^\infty|g_{kl}''|^2r^{N+2k-1}dr+(N+2k-1)\int_0^\infty|g_{kl}'|^2r^{N+2k-3}dr\Big].\label{gradU}
\end{align}
(In the variable $x=r^2$ these are the functionals $4Q$, $P_1$ and $P_0$ of \eqref{QP} with $\mu=\frac{N+2k}{2}$ and $\epsilon=\frac{N+k-2}{2}$, which explains the constant $C(N,k)=1+\sqrt{\mu^2-4\epsilon}$ in \eqref{CNk}.) For the reader's convenience we indicate how \eqref{Ubeta}--\eqref{gradU} can be checked directly in the case $k=1$, which is the one governing the sharp constant. A poloidal field of degree one with axis $\bc\in\R^N$, $|\bc|=1$, has the form
\[
\bU=\big(P(r)-Q(r)\big)(\bc\cdot\sigma)\,\sigma+Q(r)\,\bc,\qquad\sigma=\frac xr ,
\]
with $\dv\bU=\big[P'+\frac{N-1}{r}(P-Q)\big](\bc\cdot\sigma)$, so that $\bU$ is solenoidal if and only if $Q=\frac{rP'+(N-1)P}{N-1}$. Writing $u=\bc\cdot\sigma$ and using $\partial_j\sigma_i=\frac{\delta_{ij}-\sigma_i\sigma_j}{r}$, $\partial_ju=\frac{c_j-u\sigma_j}{r}$, one finds
\[
\partial_jU_i=(P'-Q')u\sigma_i\sigma_j+\frac{P-Q}{r}\sigma_i(c_j-u\sigma_j)+\frac{P-Q}{r}u(\delta_{ij}-\sigma_i\sigma_j)+Q'c_i\sigma_j ,
\]
and, computing the Frobenius norm with $\sigma\cdot(\bc-u\sigma)=0$ and $|\bc-u\sigma|^2=1-u^2$, all the mixed terms vanish except the one between the first and the last pieces. Integrating over the sphere of radius $r$, with $\int_{\bS^{N-1}}u^2d\sigma=\frac{|\bS^{N-1}|}{N}$ and $\int_{\bS^{N-1}}(1-u^2)d\sigma=\frac{N-1}{N}|\bS^{N-1}|$, this gives
\begin{align*}
\int_{\bS^{N-1}}|\bU(r\sigma)|^2d\sigma&=\frac{|\bS^{N-1}|}{N}\Big[|P(r)|^2+(N-1)|Q(r)|^2\Big],\\
\int_{\bS^{N-1}}|\nabla\bU(r\sigma)|^2d\sigma&=\frac{|\bS^{N-1}|}{N}\Big[|P'(r)|^2+(N-1)|Q'(r)|^2+\frac{2(N-1)}{r^2}|P(r)-Q(r)|^2\Big] .
\end{align*}
Substituting $P=-(N-1)g_{1l}$, hence $Q=-(N-1)g_{1l}-rg_{1l}'$ and $P-Q=rg_{1l}'$, integrating against $r^{N-1+\beta}dr$ and integrating the terms $g_{1l}g_{1l}'$ and $g_{1l}'g_{1l}''$ by parts, one recovers \eqref{Ubeta} and \eqref{gradU} for $k=1$, with $c_1|\bS^{N-1}|=\frac{N-1}{N}|\bS^{N-1}|$: for instance, in $\int\big[|P|^2+(N-1)|Q|^2\big]r^{N-1}dr$ the terms $N(N-1)^2|g_{1l}|^2+2(N-1)^2rg_{1l}g_{1l}'$ integrate to zero, leaving $(N-1)\int|g_{1l}'|^2r^{N+1}dr$, which is the case $\beta=0$ of \eqref{Ubeta}. We omit the details for $k\ge2$, which require the calculus of spherical harmonics, and refer to \cite{Ham23}.

The key observation of this paper is that \eqref{Ubeta}--\eqref{gradU} are exactly the integrals of the radial function
\begin{equation}\label{G}
G_{kl}(X):=g_{kl}(|X|),\qquad X\in\R^{N+2k},
\end{equation}
on the higher dimensional space $\R^{N+2k}$. Indeed, by \eqref{iden1} with $m=N+2k$, $\gamma=0$, $a=0$, and by \eqref{iden2},
\begin{equation}\label{lift}
\begin{aligned}
\int_{\R^N}|\nabla \bU_{kl}|^2dx&=\kappa_k\int_{\R^{N+2k}}|\Delta G_{kl}|^2dX,\qquad
\int_{\R^N}|\bU_{kl}|^2dx=\kappa_k\int_{\R^{N+2k}}|\nabla G_{kl}|^2dX,\\
\int_{\R^N}|\bU_{kl}|^2|x|^2dx&=\kappa_k\Big[\int_{\R^{N+2k}}|\nabla G_{kl}|^2|X|^2dX-2(N+k-2)\int_{\R^{N+2k}}|G_{kl}|^2dX\Big],
\end{aligned}
\end{equation}
with $\kappa_k:=c_k|\bS^{N-1}|/|\bS^{N+2k-1}|$. Hence
\begin{align}\label{blockG}
\delta_{2,N}(\bU_{kl})=\kappa_k\Big[&\int_{\R^{N+2k}}|\Delta G_{kl}|^2dX+\int_{\R^{N+2k}}|\nabla G_{kl}|^2|X|^2dX\notag\\
&-2(N+k-2)\int_{\R^{N+2k}}|G_{kl}|^2dX-2C_N\int_{\R^{N+2k}}|\nabla G_{kl}|^2dX\Big].
\end{align}

\subsection{The Fourier transform}
Let $H_{kl}=\widehat G_{kl}$ be the Fourier transform of $G_{kl}$ on $\R^{N+2k}$,
\[
H_{kl}(\xi)=\int_{\R^{N+2k}}e^{-iX\cdot\xi}G_{kl}(X)dX ,
\]
which is again radial (see \cite[Chapter IV]{SW71} for the Fourier transform of radial functions). By Plancherel's identity, the bracket in \eqref{blockG} equals
\begin{align}\label{Fourier}
(2\pi)^{-N-2k}\Big[\int|H_{kl}|^2|\xi|^4d\xi+\int|\nabla H_{kl}|^2|\xi|^2d\xi&-2(N+k-2)\int|H_{kl}|^2d\xi\notag\\
&-2C_N\int|H_{kl}|^2|\xi|^2d\xi\Big],
\end{align}
all integrals being over $\R^{N+2k}$. Now write $H_{kl}(\xi)=|\xi|^{\alpha}K_{kl}(\xi)$ with $\alpha\in\R$ to be chosen. Since $H_{kl}$ and $K_{kl}$ are radial, $\int|\nabla H_{kl}|^2|\xi|^2d\xi=|\bS^{N+2k-1}|\int_0^\infty|H_{kl}'|^2\rho^{N+2k+1}d\rho$, and \eqref{iden2} with $m=N+2k$, $\gamma=\alpha$, $b=-1$, applied to $H_{kl}=\rho^\alpha K_{kl}$, gives
\begin{align*}
\int_0^\infty\Big(|H_{kl}'|^2+\alpha(N+2k+\alpha-2)\frac{|H_{kl}|^2}{\rho^2}\Big)\rho^{N+2k+1}d\rho&=\int_0^\infty|K_{kl}'|^2\rho^{N+2k+2\alpha+1}d\rho\\
&\qquad-2\alpha\int_0^\infty|K_{kl}|^2\rho^{N+2k+2\alpha-1}d\rho ,
\end{align*}
i.e., since $|H_{kl}|^2\rho^{N+2k-1}=|K_{kl}|^2\rho^{N+2k+2\alpha-1}$,
\begin{equation}\label{HK}
\int|\nabla H_{kl}|^2|\xi|^2d\xi=\int|\nabla K_{kl}|^2|\xi|^{2+2\alpha}d\xi-\alpha(N+2k+\alpha)\int|K_{kl}|^2|\xi|^{2\alpha}d\xi .
\end{equation}
We choose $\alpha$ such that $\alpha(N+2k+\alpha)+2(N+k-2)=0$, namely
\begin{equation}\label{alpha}
\alpha=\alpha_k:=\frac{-(N+2k)+\sqrt{(N+2k)^2-8(N+k-2)}}{2}\in(-\tfrac{N+2k}{2},0),
\end{equation}
so that the zero order terms cancel, and \eqref{Fourier} becomes
\begin{equation}\label{firstorder}
(2\pi)^{-N-2k}\Big[\int_{\R^{N+2k}}|\nabla K_{kl}|^2|\xi|^{2+2\alpha}d\xi+\int_{\R^{N+2k}}|K_{kl}|^2|\xi|^{4+2\alpha}d\xi-2C_N\int_{\R^{N+2k}}|K_{kl}|^2|\xi|^{2+2\alpha}d\xi\Big].
\end{equation}
This is the deficit of a \emph{first order} weighted CKN inequality on $\R^{N+2k}$, for the radial function $K_{kl}$: with the notation of \eqref{CKNfirst} in dimension $N+2k$, the weights $|\xi|^{2+2\alpha}$ on the gradient and $|\xi|^{4+2\alpha}$ on the function correspond to the parameters $b=-1-\alpha$ and $a=-2-\alpha$, for which $1+a+b=-2-2\alpha$ and $1+b-a=2$, so that \eqref{firstorder} is the sum-form deficit of the inequality
\begin{equation}\label{CKN1}
\int_{\R^{N+2k}}|\nabla K|^2|\xi|^{2+2\alpha}d\xi+\int_{\R^{N+2k}}|K|^2|\xi|^{4+2\alpha}d\xi\ \ge\ 2C(N,k)\int_{\R^{N+2k}}|K|^2|\xi|^{2+2\alpha}d\xi .
\end{equation}
The sharp constant of \eqref{CKNfirst} is known \cite{CC09, CFL21}, and so is its sharp stability \cite{DLLN26}; rather than invoking these results for \eqref{CKN1}, we reduce \eqref{CKN1} directly to the weighted Gaussian Poincar\'e inequality of Theorem \ref{TA}, which gives at the same time the sharp constant and the sharp stability. More precisely, writing $K_{kl}(\xi)=e^{-|\xi|^2/2}w_{kl}(\xi)$, we have $\nabla K_{kl}=e^{-|\xi|^2/2}(\nabla w_{kl}-\xi w_{kl})$, hence
\[
|\nabla K_{kl}|^2|\xi|^{2+2\alpha}=\Big(|\nabla w_{kl}|^2-\xi\cdot\nabla(|w_{kl}|^2)+|\xi|^2|w_{kl}|^2\Big)e^{-|\xi|^2}|\xi|^{2+2\alpha},
\]
and, integrating by parts the middle term with $\dv\big(\xi\,e^{-|\xi|^2}|\xi|^{2+2\alpha}\big)=\big(N+2k+2+2\alpha-2|\xi|^2\big)e^{-|\xi|^2}|\xi|^{2+2\alpha}$, the terms containing $|\xi|^2|w_{kl}|^2$ cancel exactly against $\int|K_{kl}|^2|\xi|^{4+2\alpha}d\xi$; we obtain
\begin{align*}
\int|\nabla K_{kl}|^2|\xi|^{2+2\alpha}d\xi+\int|K_{kl}|^2|\xi|^{4+2\alpha}d\xi&=\int|\nabla w_{kl}|^2e^{-|\xi|^2}|\xi|^{2+2\alpha}d\xi\\
&\qquad+(N+2k+2+2\alpha)\int|w_{kl}|^2e^{-|\xi|^2}|\xi|^{2+2\alpha}d\xi ,
\end{align*}
and by \eqref{alpha} one has $N+2k+2+2\alpha_k=\sqrt{(N+2k)^2-8(N+k-2)}+2=2C(N,k)$. Therefore
\begin{align}\label{blockw}
\delta_{2,N}(\bU_{kl})=\frac{\kappa_k}{(2\pi)^{N+2k}}\Big[\int_{\R^{N+2k}}|\nabla w_{kl}|^2e^{-|\xi|^2}|\xi|^{2+2\alpha_k}d\xi&\notag\\
+2\big(C(N,k)-C_N\big)\int_{\R^{N+2k}}|w_{kl}|^2e^{-|\xi|^2}|\xi|^{2+2\alpha_k}d\xi\Big]&.
\end{align}
In particular, since $C(N,k)\ge C(N,1)=C_N$, this gives a one-line proof of Hamamoto's inequality $\delta_{2,N}(\bU_{kl})\ge0$ for each block. Let us make this explicit, as it recovers the poloidal part of Theorem \ref{THam}. By \eqref{blockw} with $C_N$ replaced by $C(N,k)$, the deficit of the block $k$ at the level $2C(N,k)$ is a nonnegative quadratic form, which vanishes exactly when $w_{kl}$ is constant, i.e.\ $H_{kl}=c\,e^{-|\xi|^2/2}|\xi|^{\alpha_k}$; hence $C(N,k)$ is the sharp constant of \eqref{HUPsol} on the poloidal fields of degree $k$, with the extremals computed in \eqref{gk} below. Moreover $C(N,k)=1+\frac12\sqrt{(N+2k)^2-8(N+k-2)}$ is increasing in $k$, since $(N+2k)^2-8(N+k-2)=4k^2+4k(N-2)+(N-4)^2$ is; hence $C_{P,N}=C(N,1)$, which is the value in \eqref{CPCT}. The toroidal constant $C_{T,N}=\frac{N+2}2$ is recovered in Section \ref{sec:tor} from \eqref{torV}, and $C_N=\min\{C_{P,N},C_{T,N}\}$ then follows from the orthogonality of the blocks exactly as in \cite{Ham23}. The first integral in \eqref{blockw} is the Dirichlet form of a weighted Gaussian Poincar\'e inequality, and Corollary \ref{cor:GP}(i), applied to the radial profile with $\alpha=N+2k+2+2\alpha_k=2C(N,k)>0$, yields
\begin{align}\label{Kstab}
\int_{\R^{N+2k}}|\nabla w_{kl}|^2e^{-|\xi|^2}|\xi|^{2+2\alpha_k}d\xi\ &\ge\ 4\inf_{c\in\R}\int_{\R^{N+2k}}|w_{kl}-c|^2e^{-|\xi|^2}|\xi|^{2+2\alpha_k}d\xi\notag\\
&=4\inf_{c\in\R}\int_{\R^{N+2k}}\big|H_{kl}-c\,e^{-\frac{|\xi|^2}{2}}|\xi|^{\alpha_k}\big|^2|\xi|^2d\xi ,
\end{align}
and the constant $4$ is sharp.

\subsection{Back to the poloidal extremals}
By Plancherel's identity and the second identity in \eqref{lift},
\begin{align*}
&\inf_{c\in\R}\int_{\R^{N+2k}}\big|H_{kl}-c\,e^{-\frac{|\xi|^2}{2}}|\xi|^{\alpha_k}\big|^2|\xi|^2d\xi\\
&\qquad=(2\pi)^{N+2k}\inf_{c\in\R}\int_{\R^{N+2k}}\big|\nabla\big(G_{kl}-c\,\cF^{-1}(e^{-|\cdot|^2/2}|\cdot|^{\alpha_k})\big)\big|^2dX .
\end{align*}
The inverse Fourier transform of $e^{-|\xi|^2/2}|\xi|^{\alpha}$ on $\R^{n}$ ($n=N+2k$) is a classical computation in terms of Kummer's function: for a radial function $\Phi(|\xi|)$ one has
\[
\cF^{-1}\Phi(X)=(2\pi)^{-n/2}|X|^{1-n/2}\int_0^\infty\Phi(\rho)J_{n/2-1}(|X|\rho)\rho^{n/2}d\rho
\]
\cite[Chapter IV]{SW71}, and the integral $\int_0^\infty\rho^{\alpha+n/2}e^{-\rho^2/2}J_{n/2-1}(|X|\rho)d\rho$ is evaluated by the formula of Step 4 in the proof of Theorem \ref{thm:1d}, with $m=\alpha+n/2$, $\nu=n/2-1$ and $\alpha$ (the parameter of the Gaussian there) equal to $\frac12$. This gives
\begin{align*}
\cF^{-1}\big(e^{-|\cdot|^2/2}|\cdot|^{\alpha}\big)(X)&=\frac{2^{\alpha/2}}{(2\pi)^{n/2}}\frac{\Gamma(\frac{n+\alpha}2)}{\Gamma(\frac n2)}\,{}_1F_1\Big(\frac{n+\alpha}{2},\frac n2,-\frac{|X|^2}{2}\Big)\\
&=\frac{2^{\alpha/2}}{(2\pi)^{n/2}}\frac{\Gamma(\frac{n+\alpha}2)}{\Gamma(\frac n2)}\,e^{-\frac{|X|^2}{2}}{}_1F_1\Big(-\frac{\alpha}{2},\frac n2,\frac{|X|^2}{2}\Big),
\end{align*}
where we used Kummer's transformation ${}_1F_1(p,q,z)=e^z{}_1F_1(q-p,q,-z)$. With $\alpha=\alpha_k$ and $n=N+2k$ we have $\frac{n+\alpha_k}{2}=\frac{N+2k+\sqrt{(N+2k)^2-8(N+k-2)}}{4}$, $-\frac{\alpha_k}{2}=\frac{N+2k-\sqrt{(N+2k)^2-8(N+k-2)}}{4}$ and $\frac n2=\frac{N+2k}{2}$, so that, explicitly,
\begin{align}
&\cF^{-1}\big(e^{-|\cdot|^2/2}|\cdot|^{\alpha_k}\big)(X)\notag\\
&\qquad=\gamma_{N,k}\,e^{-\frac{|X|^2}{2}}{}_1F_1\Big(\frac{N+2k-\sqrt{(N+2k)^2-8(N+k-2)}}{4},\frac{N+2k}{2},\frac{|X|^2}{2}\Big)\notag\\
&\qquad=\gamma_{N,k}\,g_k(|X|),\label{gk}\\
\gamma_{N,k}&:=\frac{2^{\frac{\alpha_k}{2}}}{(2\pi)^{\frac{N+2k}{2}}}\,\frac{\Gamma\Big(\frac{N+2k+\sqrt{(N+2k)^2-8(N+k-2)}}{4}\Big)}{\Gamma\big(\frac{N+2k}{2}\big)},\notag
\end{align}
where, by \eqref{fk},
\[
g_k(r):=r^{1-k}f_k(r)=e^{-\frac{r^2}{2}}{}_1F_1\Big(-\frac{\alpha_k}{2},\frac{N+2k}{2},\frac{r^2}{2}\Big).
\]
For $k=1$, one has $\alpha_1=\frac{-(N+2)+\sqrt{N^2-4N+12}}{2}$, $g_1=f_1$, and
\[
\gamma_{N,1}=2^{\frac{-(N+2)+\sqrt{N^2-4N+12}}{4}}(2\pi)^{-\frac{N+2}{2}}\,\frac{\Gamma\big(\frac{N+2+\sqrt{N^2-4N+12}}{4}\big)}{\Gamma\big(\frac{N+2}{2}\big)} .
\]
It is worth noting that $g_k$ is exactly Hamamoto's extremal \cite[Theorem 5.2]{Ham23} for the parameters $(\mu,\epsilon)=(\frac{N+2k}{2},\frac{N+k-2}{2})$ in the variable $x=r^2$, dilated as in \eqref{scaling} with $\lambda=2$, i.e.\ $g_k(r)=f_0(r^2/2)$: the Fourier transform reproduces Hamamoto's extremals without any computation, and the scale is the one dictated by the deficit \eqref{deficit}. Since the map $G_{kl}\mapsto\bU_{kl}$ is linear and, by the second identity in \eqref{lift}, isometric up to the factor $\kappa_k$, the distance transfers back to the block:
\begin{align*}
\inf_{c\in\R}\int_{\R^{N+2k}}\big|\nabla(G_{kl}-c\,g_k(|\cdot|))\big|^2dX
&=|\bS^{N+2k-1}|\inf_{c\in\R}\int_0^\infty|g_{kl}'-c\,g_k'|^2r^{N+2k-1}dr\\
&=\frac{1}{\kappa_k}\inf_{c\in\R}\int_{\R^N}\big|\bU_{kl}-c\,\bD(f_k\phi_{kl})\big|^2dx .
\end{align*}
Collecting \eqref{blockw}, \eqref{Kstab} and the last identities, we have proved the following.

\begin{proposition}[Block estimate]\label{prop:block}
Let $N\ge3$, $k\ge1$ and let $\bU_{kl}=\bD(f_{kl}\phi_{kl})$ be a poloidal field of degree $k$. Then
\begin{equation}\label{block}
\delta_{2,N}(\bU_{kl})\ \ge\ 4\inf_{c\in\R}\int_{\R^N}\big|\bU_{kl}-c\,\bD(f_k\phi_{kl})\big|^2dx+2\big(C(N,k)-C_N\big)\int_{\R^N}|\bU_{kl}|^2dx ,
\end{equation}
where $f_k$ and $C(N,k)$ are given in \eqref{fk} and \eqref{CNk}; in particular $\delta_{2,N}(\bU_{kl})\ge 2(C(N,k)-C_N)\int|\bU_{kl}|^2$, and the constant $4$ is sharp. The estimate holds for every $k\ge1$ with the same proof; for $k=1$ the second term vanishes, since $C(N,1)=C_N$, and \eqref{block} is the stability estimate inside the optimal block, while for $k\ge2$ both terms are used in the chains of Section \ref{sec:proofHUP}. Moreover, with $f_k^{(1)}$ the profile corresponding to the second level of the block (i.e.\ to $w_{kl}=|\xi|^2-c_0$ in the notation above),
\begin{align}\label{block2}
\delta_{2,N}(\bU_{kl})\ \ge\ &4\inf_{c\in\R}\int_{\R^N}\big|\bU_{kl}-c\,\bD(f_k\phi_{kl})\big|^2dx\notag\\
&+4\inf_{c,c'\in\R}\int_{\R^N}\big|\bU_{kl}-c\,\bD(f_k\phi_{kl})-c'\bD(f_k^{(1)}\phi_{kl})\big|^2dx\notag\\
&+2\big(C(N,k)-C_N\big)\int_{\R^N}|\bU_{kl}|^2dx .
\end{align}
\end{proposition}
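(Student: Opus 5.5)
The plan is to assemble the estimate from the three facts already in hand: the Fourier reduction \eqref{blockw}, the weighted Gaussian Poincar\'e inequality in the form of Corollary \ref{cor:GP}(i) with its second-level term, and the Kummer computation \eqref{gk}. Each of these is applied to a single block $\bU_{kl}$, lifted first to the radial function $G_{kl}$ on $\R^{N+2k}$ by \eqref{lift}.

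First, I would start from \eqref{blockw}. It writes $\delta_{2,N}(\bU_{kl})$ as $\kappa_k(2\pi)^{-N-2k}$ times the sum of two terms: the Dirichlet form of $w_{kl}$ for the measure $e^{-|\xi|^2}|\xi|^{2+2\alpha_k}d\xi$, and $2(C(N,k)-C_N)$ times the $L^2$ norm of $w_{kl}$ for the same measure. I would undo the gauge $H_{kl}=|\xi|^{\alpha_k}e^{-|\xi|^2/2}w_{kl}$ and apply Plancherel and the second identity of \eqref{lift}. This turns the second term into $2(C(N,k)-C_N)\int|\bU_{kl}|^2$ with the correct constant. The prefactors cancel because $\int|H_{kl}|^2|\xi|^2=(2\pi)^{N+2k}\int|\nabla G_{kl}|^2=(2\pi)^{N+2k}\kappa_k^{-1}\int|\bU_{kl}|^2$.

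Next, the first term is treated by \eqref{Kstab}, i.e.\ Corollary \ref{cor:GP}(i) with parameter $2C(N,k)>0$, applied to the radial profile of $w_{kl}$. This bounds it below by $4\inf_c\int|H_{kl}-c\,e^{-|\xi|^2/2}|\xi|^{\alpha_k}|^2|\xi|^2$. By Plancherel this equals $4(2\pi)^{N+2k}\inf_c\int|\nabla(G_{kl}-c\gamma_{N,k}g_k)|^2$, using \eqref{gk}. The map $g\mapsto\bD(r^{k-1}g\,\phi_{kl})$ is linear and, by \eqref{lift}, isometric up to $\kappa_k$. Since $r^{k-1}g_k=f_k$, this distance is $\kappa_k^{-1}\inf_c\int|\bU_{kl}-c\bD(f_k\phi_{kl})|^2$. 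The constants then collapse to exactly $4$, which gives \eqref{block}.

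For \eqref{block2}, I would instead use the two-level form of Corollary \ref{cor:GP}(i), which subtracts $c_0+c_1|\xi|^2$ in the second term. I would define $f_k^{(1)}:=r^{k-1}g_k^{(1)}$, where $g_k^{(1)}$ is the radial inverse Fourier transform of $|\xi|^{2+\alpha_k}e^{-|\xi|^2/2}$. By the Hankel formula of Step 4 of Theorem \ref{thm:1d}, this is again a Kummer function. The same transfer then gives the second infimum over the span of $\bD(f_k\phi_{kl})$ and $\bD(f_k^{(1)}\phi_{kl})$. Since the span is what matters, it does not matter whether $f_k^{(1)}$ is taken to be the transform of $|\xi|^2$ or of $|\xi|^2-c_0$.

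For sharpness of the constant $4$, I would take $w_{kl}=|\xi|^2-c_0$ with $c_0$ chosen to make it orthogonal to constants. Equality holds in Corollary \ref{cor:GP}(i) for this choice, and every step above is an identity. In \eqref{block} the term $2(C(N,k)-C_N)$ is part of the identity, so equality holds there too.

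The main obstacle is justifying the identities for general fields in the energy space. One must check that $G_{kl}$, $H_{kl}$ and $w_{kl}$ lie in the spaces where the integrations by parts and Plancherel are valid, in particular near $\xi=0$, where the weight $|\xi|^{\alpha_k}$ with $\alpha_k<0$ appears. I would prove everything for $g_{kl}$ smooth with compact support and conclude by density in the norm \eqref{Dmu}. This works because both sides of \eqref{block} are continuous in that norm.
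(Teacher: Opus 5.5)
Your proposal is correct and follows the paper's argument. Both go through \eqref{blockw}, then the one- and two-level forms of Corollary \ref{cor:GP}(i) as in \eqref{Kstab}, and then Plancherel, \eqref{gk} and the isometry \eqref{lift} to carry the distance back to $\bU_{kl}$, with sharpness of $4$ coming from the second level $w_{kl}=|\xi|^2-c_0$. Your $f_k^{(1)}$, built from the inverse transform of $|\xi|^{2+\alpha_k}e^{-|\xi|^2/2}$, differs from the paper's $r^{k-1}f_1(r^2/2)$, but this is harmless: as you note, only its span with $f_k$ enters the infimum in \eqref{block2}.
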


\begin{proof}
Everything has been proved above, except \eqref{block2}, which follows in the same way from the two-term form of Corollary \ref{cor:GP}(i), the second level of the Laguerre expansion in the $w_{kl}$-variable being $w_{kl}=|\xi|^2-c_0$, whose inverse transform is, by \eqref{gk} and its analogue with the extra factor $|\xi|^2$, the profile $f_k^{(1)}$. Explicitly, since $g_k(r)=f_0(r^2/2)$ with $f_0$ the extremal \eqref{bf0} for the parameters $(\mu,\epsilon)=(\frac{N+2k}{2},\frac{N+k-2}{2})$, the second level is $f_k^{(1)}(r)=r^{k-1}f_1(r^2/2)$ with $f_1$ the function \eqref{f1} for the same parameters; this follows from Step 4 in the proof of Theorem \ref{thm:1d} and the dilation \eqref{scaling} with $\lambda=2$.
\end{proof}

Summing \eqref{block} over $l$ and using that $\{\bD(f_k\phi):\phi\in\cH_k\}$ is spanned by the $\bD(f_k\phi_{kl})$, we obtain for the full degree-$k$ part $\bU_k:=\sum_l\bU_{kl}$ of $\bU_P$:
\begin{equation}\label{blockk}
\delta_{2,N}(\bU_k)\ \ge\ 4\inf_{\phi\in\cH_k}\int_{\R^N}\big|\bU_k-\bD(f_k\phi)\big|^2dx+2\big(C(N,k)-C_N\big)\int_{\R^N}|\bU_k|^2dx .
\end{equation}
It is worth noting that the argument of this section is Theorem \ref{thm:1d} in the special case $2\mu=N+2k$, $4\epsilon=2(N+k-2)$, read on radial functions of $\R^{N+2k}$: the Fourier transform of radial functions is the Hankel transform of order $\frac{N+2k}2-1$, the passage from $H_{kl}$ to $K_{kl}$ is the substitution $w=w_0h$ of Step 3 in the proof of Theorem \ref{thm:1d}, and the inverse Fourier transform \eqref{gk} is the formula of Step 4. Conversely, Proposition \ref{prop:block} can be deduced from Theorem \ref{thm:1d} by the dilation $\lambda=2$ of \eqref{scaling}, under which the gap $2$ becomes the gap $4$ of \eqref{block}. We have kept the $N+2k$-dimensional formulation since it is the one which produces the extremal fields $\bD(f_k\phi)$ directly.

For later reference, and for comparison with the toroidal estimate of Section \ref{sec:tor}, let us record what \eqref{blockk} gives for the whole poloidal part $\bU_P=\sum_{k\ge1}\bU_k$ when only the first level of each block is used. Writing $\phi_{1l}=\sigma_l$, $l=1,\dots,N$, for the orthonormal basis of $\cH_1$ (up to the normalization $|\bS^{N-1}|^{-1/2}\sqrt N$), one has $\sum_lc_l\bD(f_1\phi_{1l})=\bD(\bc\cdot\sigma\,f_1)$ with $\bc=(c_1,\dots,c_N)$, and therefore
\begin{align}
\delta_{2,N}(\bU_P)&=\sum_{l=1}^N\delta_{2,N}(\bU_{1l})+\sum_{k\ge2}\sum_{l=1}^{\dim\cH_k}\delta_{2,N}(\bU_{kl})\notag\\
&\ge4\sum_{l=1}^N\inf_{c_l\in\R}\int_{\R^N}|\bU_{1l}-c_l\bD(f_1\phi_{1l})|^2dx+\sum_{k\ge2}\sum_{l=1}^{\dim\cH_k}2\big(C(N,k)-C_N\big)\int_{\R^N}|\bU_{kl}|^2dx\notag\\
&\ge\min\Big\{4,\ 2\min_{k\ge2}\big(C(N,k)-C_N\big)\Big\}\inf_{\bc\in\R^N}\int_{\R^N}\big|\bU_P-\bD(\bc\cdot\sigma\,f_1)\big|^2dx\notag\\
&=\Big(\sqrt{N^2+16}-\sqrt{N^2-4N+12}\Big)\inf_{\bc\in\R^N}\int_{\R^N}\big|\bU_P-\bD(\bc\cdot\sigma\,f_1)\big|^2dx ,\label{polstab}
\end{align}
since $2(C(N,2)-C_N)=\sqrt{N^2+16}-\sqrt{N^2-4N+12}<4$ and $C(N,k)$ is increasing in $k$. This is the sharp stability estimate of \eqref{HUPsol} restricted to poloidal fields, the constant being attained in the block $k=2$; it is stronger than what is needed in Theorem \ref{thm:HUP4}, whose first constant $\Lambda_1=N-\sqrt{N^2-4N+12}$ is smaller, since $N<\sqrt{N^2+16}$: the toroidal block, and not the poloidal one, dictates the stability constant of \eqref{HUPsol}.

We record the values we need: $2(C(N,1)-C_N)=0$, $2(C(N,2)-C_N)=\sqrt{N^2+16}-\sqrt{N^2-4N+12}$, and $2(C(N,k)-C_N)$ is increasing in $k$, with $2(C(N,3)-C_N)=\sqrt{N^2+4N+28}-\sqrt{N^2-4N+12}\ge4$ for $N\ge3$ (with equality at $N=3$).

\section{Toroidal fields: proof of Theorem \ref{thm:tor}}\label{sec:tor}

\subsection{Polynomial toroidal fields}
We first characterize the toroidal fields of the form $(Ax+\cB x\cdot x)e^{-|x|^2/2}$, i.e.\ the fields whose profile is a vector-valued polynomial of degree at most two (the Gaussian factor plays no role in the toroidality).

\begin{lemma}\label{lem:MN}
Let $A\in\R^{N\times N}$ and let $\cB=(B^1,\dots,B^N)$ be an $N$-tuple of symmetric matrices. Then the field $x\mapsto Ax$ is toroidal if and only if $A\in\mathcal A_N$, and the field $x\mapsto\cB x\cdot x-\frac12\Tr\cB$ is toroidal if and only if $\cB\in\cM_N$, i.e.\ \eqref{MN} holds; in that case $\Tr\cB=0$ and the field is $\cB x\cdot x$.
\end{lemma}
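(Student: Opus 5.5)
The plan is to check directly the two defining conditions of a toroidal field, $\bU\cdot x=0$ and $\dv\bU=0$, treating the linear and the quadratic parts separately.

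\emph{Linear part.} For $\bU(x)=Ax$ one has $\bU\cdot x=Ax\cdot x=\frac12(A+A^T)x\cdot x$. This vanishes for all $x$ if and only if $A+A^T=0$, i.e.\ $A\in\mathcal A_N$. In that case $\dv\bU=\Tr A=0$ automatically, so $Ax$ is toroidal exactly when $A\in\mathcal A_N$.

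\emph{Quadratic part.} Let $\bU(x)=\cB x\cdot x-\frac12\Tr\cB$, where $\Tr\cB:=(\Tr B^1,\dots,\Tr B^N)$, so that $U_i=B^ix\cdot x-\frac12\Tr B^i$.

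First, $\bU\cdot x=\sum_{i,j,l}B^i_{jl}x_ix_jx_l-\frac12\sum_i\Tr B^i\,x_i$. This is the sum of a homogeneous cubic and a homogeneous linear polynomial, so it vanishes identically if and only if both parts vanish.
\begin{itemize}
\item The linear part vanishes if and only if $\Tr B^i=0$ for all $i$, which is the first condition in \eqref{MN}.
\item The cubic part vanishes if and only if the full symmetrization of the tensor $T_{ijl}:=B^i_{jl}$ is zero. Since $T$ is already symmetric in $(j,l)$, the symmetrization is proportional to $B^i_{jl}+B^j_{il}+B^l_{ij}$ (the six permutations pair up). So the cubic part vanishes if and only if the third condition in \eqref{MN} holds.
\end{itemize}

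Next, $\dv\bU=\sum_i\partial_i(B^ix\cdot x)=2\sum_{i,k}B^i_{ik}x_k$. This vanishes for all $x$ if and only if $\sum_iB^i_{ik}=0$ for every $k$, which is the second condition in \eqref{MN}.

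Combining these, $\bU$ is toroidal if and only if $\cB\in\cM_N$. In that case $\Tr\cB=0$, so the field is simply $\cB x\cdot x$.

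The constant shift $-\frac12\Tr\cB$ is what makes the equivalence clean. Without it, the divergence and cubic conditions alone would already force $\Tr B^i=0$: contracting the cyclic identity in $j=l$ gives $\Tr B^i+2\sum_jB^j_{ij}=0$, and the divergence condition then yields $\Tr B^i=0$. So the first condition of \eqref{MN} is in fact a consequence of the other two, and I would note this redundancy.

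The only point needing care is the polynomial-identity step for the cubic term. I would justify it by recalling that a symmetric trilinear form is determined by its diagonal values (polarization), so the cubic vanishes identically exactly when the symmetrized tensor vanishes. Everything else is direct computation. Multiplying by the radial factor $e^{-|x|^2/2}$ preserves both conditions: $\bU\cdot x=0$ is unaffected, and $\dv(\phi\bU)=\phi\dv\bU+\phi'\,\sigma\cdot\bU=0$. This is why the Gaussian factor plays no role in toroidality, as claimed before the lemma.
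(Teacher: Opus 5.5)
Your proof is correct and follows essentially the paper's route. Like the paper, you check $\bU\cdot x=0$ and $\dv\bU=0$ directly. The only cosmetic difference is that you split $x\cdot\bU$ into homogeneous linear and cubic parts and invoke polarization, whereas the paper reads off the same conditions from $\nabla\phi(0)$ and the Hessian $\partial^2_{ij}\phi=2\sum_l(B^i_{jl}+B^j_{il}+B^l_{ij})x_l$. Your side remark is also correct, though not needed: the trace condition in \eqref{MN} follows from the other two, by contracting the cyclic identity at $j=l$ and using $\sum_j B^j_{ji}=0$.
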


\begin{proof}
The first statement is clear: $\dv(Ax)=\Tr A$ and $Ax\cdot x=0$ for all $x$ if and only if $A$ is antisymmetric, and then $\Tr A=0$. For the second, put $\bV(x)=\cB x\cdot x-\frac12\Tr\cB$. One has $\dv \bV=2\sum_k\big(\sum_iB^i_{ik}\big)x_k$, so $\bV$ is divergence-free if and only if $\sum_iB^i_{ik}=0$ for all $k$. Next let $\phi(x):=x\cdot \bV(x)=\sum_{m,k,l}B^m_{kl}x_kx_lx_m-\frac12\sum_kx_k\Tr B^k$, a polynomial of degree three; its derivatives are
\[
\partial_i\phi=\sum_{k,l}\big(B^i_{kl}+B^k_{il}+B^l_{ki}\big)x_kx_l-\frac12\Tr B^i,\qquad
\partial^2_{ij}\phi=2\sum_l\big(B^i_{jl}+B^j_{il}+B^l_{ij}\big)x_l .
\]
If $\bV$ is toroidal, then $\phi\equiv0$, hence $\Tr B^i=0$ and $B^i_{jl}+B^j_{il}+B^l_{ij}=0$ for all $i,j,l$. Conversely, under \eqref{MN} the polynomial $\phi$ satisfies $\nabla^2\phi\equiv0$, $\nabla\phi(0)=0$ and $\phi(0)=0$, hence $\phi\equiv0$, so $\bV\cdot x=0$, and $\bV$ is divergence-free by the second condition.
\end{proof}

\subsection{The Gaussian Poincar\'e inequality}
Let $\bU$ be toroidal and write $\bU=\bV e^{-|x|^2/2}$. Then $\partial_jU_i=e^{-|x|^2/2}(\partial_jV_i-x_jV_i)$, so that $|\nabla\bU|^2=e^{-|x|^2}\big(|\nabla\bV|^2-x\cdot\nabla(|\bV|^2)+|x|^2|\bV|^2\big)$; integrating by parts the middle term, $-\int x\cdot\nabla(|\bV|^2)e^{-|x|^2}dx=\int|\bV|^2(N-2|x|^2)e^{-|x|^2}dx$, and the terms containing $|x|^2|\bV|^2$ cancel against $\int|\bU|^2|x|^2dx=\int|\bV|^2|x|^2e^{-|x|^2}dx$. We obtain
\begin{equation}\label{torV}
\int_{\R^N}|\nabla \bU|^2dx+\int_{\R^N}|\bU|^2|x|^2dx=\int_{\R^N}|\nabla \bV|^2e^{-|x|^2}dx+N\int_{\R^N}|\bV|^2e^{-|x|^2}dx ,
\end{equation}
so that the toroidal deficit of Theorem \ref{thm:tor} equals $\int|\nabla \bV|^2e^{-|x|^2}-2\int|\bV|^2e^{-|x|^2}$. Since $\bU$ is toroidal, $\int_{\bS^{N-1}}\bV(r\sigma)d\sigma=0$ for every $r>0$ (see Section \ref{sec:sol}), hence each component $V_i$ has zero mean with respect to the Gaussian measure $e^{-|x|^2}dx$. We use the classical Gaussian Poincar\'e inequality in the following form with two remainder terms. The Ornstein--Uhlenbeck operator $L:=-\Delta+2x\cdot\nabla$ is self-adjoint and nonnegative on $L^2(e^{-|x|^2}dx)$, with $\int|\nabla v|^2e^{-|x|^2}dx=\int vLv\,e^{-|x|^2}dx$; its eigenfunctions are the products $H_{m_1}(x_1)\cdots H_{m_N}(x_N)$ of Hermite polynomials, with eigenvalue $2(m_1+\dots+m_N)$, so that its eigenvalues are $0,2,4,6,\dots$, the eigenspace of $0$ consists of the constants, that of $2$ is spanned by $x_1,\dots,x_N$, and that of $4$ is spanned by $x_ix_j$ ($i<j$) and $x_i^2-\frac12$. Expanding $v$ in this orthogonal basis, the level $j$ contributes $2j$ times its squared norm to $\int|\nabla v|^2e^{-|x|^2}$, while the three terms on the right-hand side of \eqref{GP2} below are twice the squared norms of the projections of $v$ onto the levels $j\ge1$, $j\ge2$ and $j\ge3$ (recall that $v$ has zero mean); since $2j\ge2\min\{j,3\}$, one obtains: for every $v$ with $\int ve^{-|x|^2}dx=0$,
\begin{align}\label{GP2}
\int_{\R^N}|\nabla v|^2e^{-|x|^2}dx\ \ge\ &2\int_{\R^N}|v|^2e^{-|x|^2}dx+2\inf_{\bc\in\R^N}\int_{\R^N}|v-\bc\cdot x|^2e^{-|x|^2}dx\notag\\
&+2\inf_{\substack{\bc\in\R^N\\ B\ \text{symmetric}}}\int_{\R^N}\Big|v-\bc\cdot x-\Big(Bx\cdot x-\frac12\Tr B\Big)\Big|^2e^{-|x|^2}dx ,
\end{align}
with equality exactly when $v$ has no component in the levels $j\ge4$. It is worth noting that a fourth term, with the cubic polynomials of the level $3$, could be added with the same constant $2$, since $2j\ge8$ for $j\ge4$. Applying \eqref{GP2} to each component of $\bV$ and summing, we get
\begin{align}\label{Vstab}
\int_{\R^N}|\nabla \bU|^2dx&+\int_{\R^N}|\bU|^2|x|^2dx-(N+2)\int_{\R^N}|\bU|^2dx\notag\\
&\ge2\inf_{A\in\R^{N\times N}}\int_{\R^N}|\bV-Ax|^2e^{-|x|^2}dx\notag\\
&\qquad+2\inf_{\substack{A\in\R^{N\times N}\\ \cB\ \text{symmetric}}}\int_{\R^N}\Big|\bV-Ax-\Big(\cB x\cdot x-\frac12\Tr\cB\Big)\Big|^2e^{-|x|^2}dx .
\end{align}

\subsection{Reduction of the infima to $\mathcal A_N$ and $\cM_N$}
It remains to show that the infima in \eqref{Vstab} can be restricted to $A\in\mathcal A_N$ and $\cB\in\cM_N$; this is where the toroidality of $\bU$ is used a second time. We first observe that, since $\bU\cdot x=0$ and $\dv \bU=0$, for every polynomial $\varphi$ (more generally, for every smooth $\varphi$ of polynomial growth)
\begin{equation}\label{torid}
\int_{\R^N}\bU\cdot\nabla\varphi\,e^{-\frac{|x|^2}{2}}dx=\int_{\R^N}\bU\cdot\nabla\Big(\varphi e^{-\frac{|x|^2}{2}}\Big)dx+\int_{\R^N}(\bU\cdot x)\varphi\,e^{-\frac{|x|^2}{2}}dx=0 :
\end{equation}
indeed $\nabla(\varphi e^{-|x|^2/2})=(\nabla\varphi-x\varphi)e^{-|x|^2/2}$, the second integral vanishes because $\bU\cdot x=0$, and the first one vanishes after an integration by parts because $\dv\bU=0$.
Expanding the square and using $\int x_kx_le^{-|x|^2}dx=\frac{\gamma_N}{2}\delta_{kl}$ with $\gamma_N:=\int e^{-|x|^2}dx=\pi^{N/2}$, one has $\int|\bV-Ax|^2e^{-|x|^2}dx=\int|\bV|^2e^{-|x|^2}dx-2\sum_{k,l}A_{kl}\int V_kx_le^{-|x|^2}dx+\frac{\gamma_N}{2}\sum_{k,l}A_{kl}^2$, so the first infimum in \eqref{Vstab} is attained at the matrix $A_0$ with entries $(A_0)_{kl}=\frac{2}{\gamma_N}\int_{\R^N}V_kx_le^{-|x|^2}dx$, and its value is
\[
\inf_{A\in\R^{N\times N}}\int_{\R^N}|\bV-Ax|^2e^{-|x|^2}dx=\int_{\R^N}|\bV|^2e^{-|x|^2}dx-\frac{2}{\gamma_N}\sum_{k,l=1}^N\Big(\int_{\R^N}V_kx_le^{-|x|^2}dx\Big)^2 .
\] Taking $\varphi=x_i^2$ and $\varphi=x_kx_l$ ($k\ne l$) in \eqref{torid} gives $\int V_ix_ie^{-|x|^2}=0$ and $\int V_kx_le^{-|x|^2}=-\int V_lx_ke^{-|x|^2}$, i.e.\ $A_0\in\mathcal A_N$. This proves the first term of \eqref{stabtor}.

For the second infimum, by the orthogonality of the Hermite eigenspaces the two minimizations decouple, and the minimization over $\cB$ amounts to minimizing
\begin{align*}
-2\int_{\R^N}\bV\cdot(\cB x\cdot x)e^{-|x|^2}dx&+\int_{\R^N}\Big|\cB x\cdot x-\frac12\Tr\cB\Big|^2e^{-|x|^2}dx\\
&=\sum_{i=1}^N\Big(-2\Tr(B^iC^i)+\frac{\gamma_N}{2}\Tr\big((B^i)^2\big)\Big),
\end{align*}
where $C^i$ is the symmetric matrix with entries $C^i_{kl}=\int_{\R^N}V_ix_kx_le^{-|x|^2}dx$. Here we used $\int V_ie^{-|x|^2}dx=0$ for the term $\Tr B^i\int V_ie^{-|x|^2}$, and the following moments: after an orthogonal change of variables diagonalizing $B^i=\mathrm{diag}(\beta_1,\dots,\beta_N)$, which leaves $e^{-|x|^2}$ invariant, $\int(B^ix\cdot x)^2e^{-|x|^2}dx=\sum_k\beta_k^2\int y_k^4e^{-|y|^2}dy+\sum_{k\ne l}\beta_k\beta_l\int y_k^2y_l^2e^{-|y|^2}dy=\frac34\gamma_N\Tr((B^i)^2)+\frac14\gamma_N\big((\Tr B^i)^2-\Tr((B^i)^2)\big)$, and $\int(B^ix\cdot x)e^{-|x|^2}dx=\frac{\gamma_N}2\Tr B^i$, so that $\int|B^ix\cdot x-\frac12\Tr B^i|^2e^{-|x|^2}dx=\frac{\gamma_N}{2}\Tr((B^i)^2)$. Since $-2\Tr(B^iC^i)+\frac{\gamma_N}{2}\Tr((B^i)^2)=\frac{\gamma_N}{2}\Tr\big((B^i-\frac{2}{\gamma_N}C^i)^2\big)-\frac{2}{\gamma_N}\Tr((C^i)^2)$, the minimum over symmetric $B^i$ is attained exactly at $B^i=\frac{2}{\gamma_N}C^i$, and its value is $-\frac{2}{\gamma_N}\sum_i\Tr((C^i)^2)$. Finally we check that $\cC=(C^1,\dots,C^N)\in\cM_N$: the matrices $C^i$ are symmetric; $\sum_iC^i_{ik}=\int(\bU\cdot x)x_ke^{-|x|^2/2}=0$; taking $\varphi=|x|^2x_k$ in \eqref{torid} and using $\bU\cdot x=0$ gives $\Tr C^k=\int \bU_k|x|^2e^{-|x|^2/2}=0$; and taking $\varphi=x_ix_jx_l$ in \eqref{torid} gives $C^i_{jl}+C^j_{il}+C^l_{ij}=0$. Hence the second infimum in \eqref{Vstab} is attained at $(A_0,\frac{2}{\gamma_N}\cC)\in\mathcal A_N\times\cM_N$, and by Lemma \ref{lem:MN} we may drop the term $\frac12\Tr\cB$. This proves \eqref{stabtor}.

The sharpness of the constant $2$ in the first term follows by taking $\bV=\cB x\cdot x$ with $\cB\in\cM_N\setminus\{0\}$ (such $\cB$ exist for $N\ge3$: for $N=3$, one may take $B^1=e_2\otimes e_3+e_3\otimes e_2$, $B^2=-(e_1\otimes e_3+e_3\otimes e_1)$, $B^3=0$, which gives $\bV=2(x_2x_3,-x_1x_3,0)$, a toroidal field), for which \eqref{stabtor} is an equality with the first term nonzero and the second one zero. This completes the proof of Theorem \ref{thm:tor}. \qed

Combining \eqref{torV} with the Gaussian Poincar\'e inequality without remainder terms, one recovers Hamamoto's toroidal constant $C_{T,N}=\frac{N+2}{2}$ in two lines; the profile $e^{-|x|^2/2}$ is again the one dictated by the deficit.

\section{Stability of the solenoidal HUP: proofs of Theorems \ref{thm:prodHUP4}--\ref{thm:HUP3}}\label{sec:proofHUP}

\subsection{Proofs of Theorems \ref{thm:HUP4} and \ref{thm:HUP3}}
Let $\bU=\bU_T+\sum_{k\ge1}\bU_k$ be the decomposition of Section \ref{sec:sol}, so that $\delta_{2,N}(\bU)=\delta_{2,N}(\bU_T)+\sum_k\delta_{2,N}(\bU_k)$ by \eqref{split}. We write, for the families appearing in the theorems,
\begin{gather*}
d_T^2:=\inf_{A\in\mathcal A_N}\int\big|\bU_T-Axe^{-\frac{|x|^2}2}\big|^2,\qquad
d_{T}'^2:=\inf_{\substack{A\in\mathcal A_N\\ \cB\in\cM_N}}\int\big|\bU_T-(Ax+\cB x\cdot x)e^{-\frac{|x|^2}2}\big|^2,\\
d_k^2:=\inf_{\phi\in\cH_k}\int\big|\bU_k-\bD(f_k\phi)\big|^2 ,
\end{gather*}
and $\|\bU_k\|^2:=\int|\bU_k|^2$, $\|\bU_T\|^2:=\int|\bU_T|^2$. Since the blocks are mutually orthogonal in $L^2(\R^N)$ and the elements of $\cE_1,\cE_2,\cT_1,\cT_2$ belong to the corresponding blocks, the distances in \eqref{stabHUP4}--\eqref{stabHUP3} split; for instance $\dist(\bU,\cE_2+\cT_1)^2=d_T^2+d_1^2+d_2^2+\sum_{k\ge3}\|\bU_k\|^2$. We shall use the following two inputs:
\begin{itemize}
\item[(T)] by Theorem \ref{thm:tor} and $\delta_{2,N}(\bU_T)=\big[\int|\nabla \bU_T|^2+\int|\bU_T|^2|x|^2-(N+2)\int|\bU_T|^2\big]+(N+2-2C_N)\|\bU_T\|^2$,
\[
\delta_{2,N}(\bU_T)\ \ge\ (N+2-2C_N)\|\bU_T\|^2+2d_T^2+2d_T'^2,\qquad N+2-2C_N=N-\sqrt{N^2-4N+12};
\]
\item[(P$_k$)] by \eqref{blockk}, $\delta_{2,N}(\bU_k)\ge4d_k^2+2(C(N,k)-C_N)\|\bU_k\|^2$, and $d_k\le\|\bU_k\|$.
\end{itemize}

The proofs consist in a bookkeeping of the gaps: each block $B$ of $\bU$ comes with a lower bound of $\delta_{2,N}(B)$ by a combination of $\|B\|^2$ and of the distances of $B$ to its own extremals, given by (T) and (P$_k$), and the right-hand side of the theorem asks, from each block, a combination of the same quantities with the coefficients $\Lambda_j$; the inequalities between the two combinations are what we check. The reader may find it useful to keep in mind the following summary of the available gaps at $N\ge4$: the toroidal block provides $\Lambda_1$ on $\|\bU_T\|^2$ and $2+2$ on the two Hermite distances; the block $k=1$ provides $4$ on $d_1^2$; the block $k=2$ provides $\Lambda_1+\Lambda_2$ on $\|\bU_2\|^2$ and $4$ on $d_2^2$; the blocks $k\ge3$ provide at least $4$ on $\|\bU_k\|^2$. The four constants are then forced, in this order, by the toroidal block, by the block $k=2$, by the toroidal block again, and by the block $k=1$.

\begin{proof}[Proof of Theorem \ref{thm:HUP4}]
Let $N\ge4$; note that $\Lambda_1,\dots,\Lambda_4>0$ (since $N<\sqrt{N^2+16}<N+2$ and $N-2<\sqrt{N^2-4N+12}<N$), $\Lambda_1+\Lambda_2+\Lambda_3+\Lambda_4=4$ and $\Lambda_2+\Lambda_3=2$. The right-hand side of \eqref{stabHUP4} equals
\begin{align*}
\Lambda_1\Big(\|\bU_T\|^2+d_1^2+\sum_{k\ge2}\|\bU_k\|^2\Big)&+\Lambda_2\Big(d_T^2+d_1^2+\sum_{k\ge2}\|\bU_k\|^2\Big)\\
&+\Lambda_3\Big(d_T^2+d_1^2+d_2^2+\sum_{k\ge3}\|\bU_k\|^2\Big)\\
&+\Lambda_4\Big(d_T'^2+d_1^2+d_2^2+\sum_{k\ge3}\|\bU_k\|^2\Big),
\end{align*}
and we verify that it is bounded by $\delta_{2,N}(\bU)$ block by block, using (T) and (P$_k$).

\emph{Toroidal block.} The required amount is $\Lambda_1\|\bU_T\|^2+(\Lambda_2+\Lambda_3)d_T^2+\Lambda_4d_T'^2=(N+2-2C_N)\|\bU_T\|^2+2d_T^2+\Lambda_4d_T'^2\le\delta_{2,N}(\bU_T)$ by (T), since $\Lambda_4<2$.

\emph{Block $k=1$.} The required amount is $(\Lambda_1+\Lambda_2+\Lambda_3+\Lambda_4)d_1^2=4d_1^2\le\delta_{2,N}(\bU_1)$ by (P$_1$).

\emph{Block $k=2$.} The required amount is $(\Lambda_1+\Lambda_2)\|\bU_2\|^2+(\Lambda_3+\Lambda_4)d_2^2$. Now $\Lambda_1+\Lambda_2=\sqrt{N^2+16}-\sqrt{N^2-4N+12}=2(C(N,2)-C_N)$ and $\Lambda_3+\Lambda_4=4+\sqrt{N^2-4N+12}-\sqrt{N^2+16}<4$, so the required amount is at most $\delta_{2,N}(\bU_2)$ by (P$_2$).

\emph{Blocks $k\ge3$.} The required amount is $4\|\bU_k\|^2$, and $\delta_{2,N}(\bU_k)\ge2(C(N,k)-C_N)\|\bU_k\|^2\ge2(C(N,3)-C_N)\|\bU_k\|^2\ge4\|\bU_k\|^2$ by (P$_k$) and the monotonicity recorded after \eqref{blockk}.

Summing up the four cases gives \eqref{stabHUP4}.

For the sharpness we exhibit, for each $j$, a field for which equality holds with exactly the first $j$ terms nonzero; since these terms are then all equal to $\|\bU\|^2$, the right-hand side equals $(\Lambda_1+\dots+\Lambda_j)\|\bU\|^2$, and increasing $\Lambda_j$ alone would violate \eqref{stabHUP4}. For $j=1$ take $\bU=Axe^{-|x|^2/2}\in\cT_1$, $A\ne0$: the toroidal deficit at level $N+2$ vanishes, so $\delta_{2,N}(\bU)=(N+2-2C_N)\|\bU\|^2=\Lambda_1\|\bU\|^2$, while $\dist(\bU,\cE_1)^2=\|\bU\|^2$ and the three other distances vanish. For $j=2$ take $\bU=\bD(f_2\phi_2)\in\cE_2$, $\phi_2\ne0$: by Proposition \ref{prop:block} with equality (the block deficit at level $2C(N,2)$ vanishes), $\delta_{2,N}(\bU)=2(C(N,2)-C_N)\|\bU\|^2=(\Lambda_1+\Lambda_2)\|\bU\|^2$, the distances to $\cE_1$ and to $\cE_1+\cT_1$ equal $\|\bU\|^2$, and the distances to $\cE_2+\cT_1$ and $\cE_2+\cT_2$ vanish. For $j=3$ take $\bU=\cB x\cdot x\,e^{-|x|^2/2}\in\cT_2$ with $\cB\in\cM_N\setminus\{0\}$: by \eqref{torV} and the Hermite expansion, $\delta_{2,N}(\bU)=(N+2-2C_N)\|\bU\|^2+2\|\bU\|^2=(\Lambda_1+\Lambda_2+\Lambda_3)\|\bU\|^2$, the first three distances equal $\|\bU\|^2$ (a cubic profile is $L^2$-orthogonal to the linear profiles by parity), and the fourth vanishes. For $j=4$ take the poloidal field of degree one which corresponds, through the Fourier reduction of Section \ref{sec:sol}, to $w_{11}=|\xi|^2-c_0$ with $c_0$ chosen so that $w_{11}$ is orthogonal to constants in $L^2(e^{-|\xi|^2}|\xi|^{2+2\alpha_1}d\xi)$, i.e.\ to the second level of the block: by the equality case of Corollary \ref{cor:GP}(i) one has $\delta_{2,N}(\bU)=4\|\bU\|^2=(\Lambda_1+\Lambda_2+\Lambda_3+\Lambda_4)\|\bU\|^2$, while all four distances equal $\|\bU\|^2$ since $\bU$ is orthogonal to $\bD(f_1\phi_1)$ (the first level of the same block) and to the other blocks.
\end{proof}

\begin{proof}[Proof of Theorem \ref{thm:HUP3}]
Let $N=3$, so that $2C_3=5=N+2$, $2(C(3,2)-C_3)=2$ and $2(C(3,3)-C_3)=4$. The right-hand side of \eqref{stabHUP3} equals
\[
2\Big(d_T^2+d_1^2+\sum_{k\ge2}\|\bU_k\|^2\Big)+2\Big(d_T'^2+d_1^2+d_2^2+\sum_{k\ge3}\|\bU_k\|^2\Big).
\]
Toroidal block: $2d_T^2+2d_T'^2\le\delta_{2,3}(\bU_T)$ by (T), since now $N+2-2C_3=0$. Block $k=1$: $4d_1^2\le\delta_{2,3}(\bU_1)$ by (P$_1$). Block $k=2$: $2\|\bU_2\|^2+2d_2^2\le2(C(3,2)-C_3)\|\bU_2\|^2+4d_2^2\le\delta_{2,3}(\bU_2)$ by (P$_2$). Blocks $k\ge3$: $4\|\bU_k\|^2\le2(C(3,k)-C_3)\|\bU_k\|^2\le\delta_{2,3}(\bU_k)$. This proves \eqref{stabHUP3}. For the sharpness of the first constant, take $\bU=\bD(f_2\phi)$ with $\phi\in\cH_2\setminus\{0\}$: by Proposition \ref{prop:block} applied with $C(3,2)$ in place of $C_3$, the block deficit at level $2C(3,2)$ vanishes, so $\delta_{2,3}(\bU)=2(C(3,2)-C_3)\|\bU\|^2=2\|\bU\|^2$, while $\dist(\bU,\cE_1+\cT_1)^2=\|\bU\|^2$ and $\dist(\bU,\cE_2+\cT_2)^2=0$; the field $\bU=\cB x\cdot x\,e^{-|x|^2/2}$, $\cB\in\cM_3\setminus\{0\}$, does the same job. For the sharpness of the second constant, take the second-level field of the degree-one block as in the proof of Theorem \ref{thm:HUP4}: $\delta_{2,3}(\bU)=4\|\bU\|^2$ and both distances equal $\|\bU\|^2$.
\end{proof}

\subsection{Proofs of Theorems \ref{thm:prodHUP4} and \ref{thm:prodHUP3}}\label{sec:prod}
The passage from the fixed scale to the product form rests on the following elementary principle.

\begin{lemma}\label{lem:transfer}
Let $A,B,C$ be nonnegative functionals on a class of functions $\mathcal X$ stable under a one-parameter group of dilations $u\mapsto u_\lambda$, $\lambda>0$, such that $A(u_\lambda)=\lambda^{p}A(u)$, $B(u_\lambda)=\lambda^{-p}B(u)$ and $C(u_\lambda)=C(u)$ for some $p>0$, and let $\cE\subset\mathcal X$. If
\[
A(u)+B(u)-2C_*C(u)\ \ge\ \kappa\,\dist(u,\cE)^2\qquad\text{for all }u\in\mathcal X ,
\]
where $\dist$ is computed in a norm for which the dilations are isometries, then
\[
\sqrt{A(u)B(u)}-C_*C(u)\ \ge\ \frac\kappa2\,\dist\big(u,\cE_{1/\lambda_*}\big)^2\ \ge\ \frac\kappa2\,\dist\big(u,\cE^{\rm dil}\big)^2,\qquad\lambda_*:=\Big(\frac{B(u)}{A(u)}\Big)^{\frac1{2p}},
\]
where $\cE_\rho:=\{E_\rho:E\in\cE\}$ and $\cE^{\rm dil}:=\bigcup_{\rho>0}\cE_\rho$.
\end{lemma}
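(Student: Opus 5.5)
The proof is a one-line application of the hypothesis at the balancing scale. I would apply the assumed sum-form estimate not to $u$ but to its dilate $u_{\lambda_*}$, with $\lambda_*$ chosen so that $A(u_{\lambda_*})=B(u_{\lambda_*})$. By the scaling laws, $A(u_\lambda)=\lambda^pA(u)$ and $B(u_\lambda)=\lambda^{-p}B(u)$. These two quantities are equal exactly when $\lambda^{2p}=B(u)/A(u)$, that is, when $\lambda=\lambda_*$.

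At this scale both equal $\sqrt{A(u)B(u)}$. Hence
\[
A(u_{\lambda_*})+B(u_{\lambda_*})-2C_*C(u_{\lambda_*})=2\Big(\sqrt{A(u)B(u)}-C_*C(u)\Big),
\]
using $C(u_{\lambda_*})=C(u)$. The hypothesis then gives
\[
2\Big(\sqrt{A(u)B(u)}-C_*C(u)\Big)\ \ge\ \kappa\,\dist(u_{\lambda_*},\cE)^2 .
\]

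Next I transfer the distance back to $u$. Since the dilations form a group of isometries, $u_{\lambda_*}-E=(u-E_{1/\lambda_*})_{\lambda_*}$ for every $E\in\cE$. Taking norms and the infimum over $E$ gives $\dist(u_{\lambda_*},\cE)=\dist(u,\cE_{1/\lambda_*})$. Dividing by $2$ yields the first inequality. The second inequality is immediate because $\cE_{1/\lambda_*}\subset\cE^{\rm dil}$.

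The only points needing care are the degenerate cases $A(u)=0$ or $B(u)=0$, where $\lambda_*$ is undefined. In the applications these cases force $u=0$, or can be handled by a limit $\lambda\to0$ or $\lambda\to\infty$. Alternatively, one can simply assume $A(u),B(u)>0$, as is implicit in the definition of $\lambda_*$.

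This mirrors the one-dimensional computation leading to \eqref{prodid1d}. It also explains the remark $\delta_2\ge2\delta_1$ made earlier: the elementary inequality $A+B\ge2\sqrt{AB}$ becomes an equality exactly at the balancing scale. No step is a real obstacle. The only thing to check is that the isometry identity uses $(v_\lambda)_\rho=v_{\lambda\rho}$, which holds for the group $\lambda^{N/2}\bU(\lambda\,\cdot)$ used in Theorems \ref{thm:prodHUP4}--\ref{thm:prodHUP3}.
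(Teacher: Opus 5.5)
Your proposal is correct and follows the paper's proof: apply the sum-form hypothesis to the balanced dilate $u_{\lambda_*}$, where $A(u_{\lambda_*})=B(u_{\lambda_*})=\sqrt{A(u)B(u)}$, transfer the distance via $\dist(u_{\lambda_*},\cE)=\dist(u,\cE_{1/\lambda_*})$ using the isometry property, and conclude with $\cE_{1/\lambda_*}\subset\cE^{\rm dil}$. The paper leaves two points implicit that you make explicit: the degenerate cases $A(u)=0$ or $B(u)=0$, and the use of linearity of the dilations in the isometry step.
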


\begin{proof}
For $u_{\lambda_*}$ one has $A(u_{\lambda_*})=B(u_{\lambda_*})=\sqrt{A(u)B(u)}$, so the hypothesis applied to $u_{\lambda_*}$ reads $2\sqrt{A(u)B(u)}-2C_*C(u)\ge\kappa\dist(u_{\lambda_*},\cE)^2$, and $\dist(u_{\lambda_*},\cE)=\dist(u,\cE_{1/\lambda_*})$ since $u\mapsto u_\lambda$ is an isometry with inverse $u\mapsto u_{1/\lambda}$; the last inequality is $\cE_{1/\lambda_*}\subset\cE^{\rm dil}$.
\end{proof}

We apply the lemma with $p=2$ to the three integrals of \eqref{HUPsol} (and, in Section \ref{sec:2nd}, with $p=2+2a$ to those of \eqref{2ndCKN}). For a vector field $\bU$ and $\lambda>0$ let $\bU_\lambda(x):=\lambda^{N/2}\bU(\lambda x)$, which is solenoidal (resp.\ toroidal, poloidal of degree $k$) whenever $\bU$ is. Then $\int|\bU_\lambda|^2=\int|\bU|^2$, $\int|\nabla\bU_\lambda|^2=\lambda^2\int|\nabla\bU|^2$ and $\int|\bU_\lambda|^2|x|^2=\lambda^{-2}\int|\bU|^2|x|^2$. With $A,B,C$ the three integrals of $\bU$, the choice
\[
\lambda_*:=\Big(\frac{B}{A}\Big)^{\frac14}
\]
gives $A(\bU_{\lambda_*})=B(\bU_{\lambda_*})=\sqrt{AB}$, hence $\delta_{2,N}(\bU_{\lambda_*})=2\sqrt{AB}-2C_NC=2\delta_{1,N}(\bU)$, by the invariance of $\delta_{1,N}$. Moreover, for every family $\cE$ and every $\lambda>0$, $\dist(\bU_\lambda,\cE)=\dist(\bU,\cE_{1/\lambda})$, where $\cE_\rho:=\{\bE_\rho:\bE\in\cE\}$, since $\bU\mapsto\bU_\lambda$ is an isometry of $L^2(\R^N)$ with inverse $\bU\mapsto\bU_{1/\lambda}$. Applying Theorem \ref{thm:HUP4} to $\bU_{\lambda_*}$ and dividing by $2$ we obtain
\begin{align*}
\sqrt{AB}-C_NC\ \ge\ \tfrac12\Big[&\Lambda_1\dist(\bU,(\cE_1)_{1/\lambda_*})^2+\Lambda_2\dist(\bU,(\cE_1+\cT_1)_{1/\lambda_*})^2\\
&+\Lambda_3\dist(\bU,(\cE_2+\cT_1)_{1/\lambda_*})^2+\Lambda_4\dist(\bU,(\cE_2+\cT_2)_{1/\lambda_*})^2\Big],
\end{align*}
which is the stronger form of \eqref{prod4} mentioned in Theorem \ref{thm:prodHUP4}; since $\cE_{1/\lambda_*}\subset\cE^{\rm dil}$, each distance on the right-hand side is at least the distance to $\cE^{\rm dil}$, and \eqref{prod4} follows. Theorem \ref{thm:HUP3} gives \eqref{prod3} in the same way. This proves Theorems \ref{thm:prodHUP4} and \ref{thm:prodHUP3}, up to the sharpness.

For the sharpness in Theorems \ref{thm:prodHUP4} and \ref{thm:prodHUP3}, we observe that the test fields used for the first constants in Theorems \ref{thm:HUP4} and \ref{thm:HUP3} are balanced, i.e.\ satisfy $A=B$: they minimize the ratio of the deficit $\delta_{2,N}$ to the dilation invariant $L^2$-norm within their block, and $\frac{d}{d\lambda}\delta_{2,N}(\bU_\lambda)|_{\lambda=1}=2(A-B)$ vanishes at such a minimizer (the field $\bU=Mxe^{-|x|^2/2}$ with $M\in\cA_N$, for instance, has $A=B$ by a direct computation: with $M$ the rotation in the $(x_1,x_2)$-plane, $|Mx|^2=x_1^2+x_2^2$ has spherical average $\frac{2r^2}N$, and $|\nabla\bU|^2=e^{-|x|^2}\big(|M|^2-2|Mx|^2+|Mx|^2|x|^2\big)$ with $|M|^2=2$, so that both $A$ and $B$ equal $|\bS^{N-1}|\int_0^\infty\frac{2}{N}r^{N+3}e^{-r^2}dr$ after the term $\int(2-\frac{4r^2}{N})r^{N-1}e^{-r^2}dr=0$ is discarded). For a balanced field $\delta_{1,N}(\bU)=\frac12\delta_{2,N}(\bU)$, and the distance to $\cE^{\rm dil}$ equals the distance to $\cE$, as the test fields are orthogonal to the dilated families as well (the orthogonality of the blocks, and of the Hermite and Laguerre levels, is preserved by dilations of the family). Hence the equalities of the sum-form case become equalities of the product-form case with half the constants. \qed

The transition between $N\ge4$ and $N=3$ is visible in the constants: evaluating the formulas for $\Lambda_1,\dots,\Lambda_4$ at $N=3$ gives $\Lambda_1=\Lambda_3=0$ and $\Lambda_2=\Lambda_4=2$, i.e.\ exactly the chain \eqref{stabHUP3}. The two constants which vanish are those dictated by the toroidal block, and correspondingly the families $\cE_1$ and $\cE_1+\cT_1$ (resp.\ $\cE_2+\cT_1$ and $\cE_2+\cT_2$) play the same role at $N=3$, where the toroidal fields $Axe^{-|x|^2/2}$ become extremals. Finally, the chains of Theorems \ref{thm:HUP4} and \ref{thm:HUP3} can be continued in the same way, using the next Hermite level (toroidal fields with cubic profiles), the block $k=3$ and the higher levels of Remark \ref{rem:chain}, the constants being again the successive gaps.

\section{The second order CKN inequality: proofs of Theorems \ref{thm:prod2nd} and \ref{thm:2nd}}\label{sec:2nd}

Throughout this section $a>-1$ and $b=-1-a$, so that $1+a-b=2+2a>0$. We use the spherical harmonics decomposition
\[
f(x)=\sum_{k=0}^\infty\sum_{l=1}^{\dim\cH_k}f_{kl}(r)\phi_{kl}(\sigma),\qquad r=|x|,\ \sigma=\frac{x}{|x|},
\]
where $\{\phi_{kl}\}_l$ is an $L^2(\bS^{N-1})$-orthonormal basis of $\cH_k$ and $c_k:=k(k+N-2)$ is the eigenvalue of $-\Delta_{\bS^{N-1}}$ on $\cH_k$. By orthogonality, the deficit $\delta_{2,a}(f)$ equals
\begin{align}\label{decomp}
\sum_{k,l}\Bigg[&\int_0^\infty\Big|f_{kl}''+\frac{N-1}rf_{kl}'-c_k\frac{f_{kl}}{r^2}\Big|^2r^{N-2a-1}dr\notag\\
&+\int_0^\infty\Big(|f_{kl}'|^2+c_k\frac{|f_{kl}|^2}{r^2}\Big)\big(r^{N-2b-1}-(N+4a+2)r^{N-1}\big)dr\Bigg],
\end{align}
and, since the members of $\mathcal G_1$ and $\mathcal G_2$ are gradients of functions whose spherical modes are of degree $0$ and $1$ only, both distances on the right-hand side of \eqref{stab2nd} split over the modes as well (the gradients of functions in different spherical modes are $L^2$-orthogonal, as $\int\nabla(f_{kl}\phi_{kl})\cdot\nabla(f_{k'l'}\phi_{k'l'})=-\int f_{kl}\phi_{kl}\Delta(f_{k'l'}\phi_{k'l'})=0$ for $(k,l)\ne(k',l')$). We treat the radial mode, the mode $k=1$ and the modes $k\ge2$ separately.

\subsection{The radial mode}
Let $f_{01}$ be the radial mode of $f$. We complete the square. Expanding the right-hand side of \eqref{radsq} below, the squared terms are $\int|f_{01}''|^2r^{N-2a-1}$, $(1+2a)^2\int|f_{01}'|^2r^{N-2a-3}$ and $\int|f_{01}'|^2r^{N+2a+1}=\int|f_{01}'|^2r^{N-2b-1}$ (since $-2b=2+2a$), and the cross terms are
\begin{align*}
-(1+2a)\int_0^\infty(|f_{01}'|^2)'r^{N-2a-2}dr&=(1+2a)(N-2a-2)\int_0^\infty|f_{01}'|^2r^{N-2a-3}dr,\\
\int_0^\infty(|f_{01}'|^2)'r^{N}dr&=-N\int_0^\infty|f_{01}'|^2r^{N-1}dr
\end{align*}
and $-2(1+2a)\int|f_{01}'|^2r^{N-1}$. On the other hand $\int|f_{01}''+\frac{N-1}rf_{01}'|^2r^{N-2a-1}=\int|f_{01}''|^2r^{N-2a-1}+(N-1)(1+2a)\int|f_{01}'|^2r^{N-2a-3}$ by \eqref{iden1} with $\gamma=0$. Comparing the coefficients of $\int|f_{01}'|^2r^{N-2a-3}$, namely $(1+2a)^2+(1+2a)(N-2a-2)=(1+2a)(N-1)$, and of $\int|f_{01}'|^2r^{N-1}$, namely $-N-2(1+2a)=-(N+4a+2)$, we obtain
\begin{align}\label{radsq}
\int_0^\infty\Big|f_{01}''+\frac{N-1}rf_{01}'\Big|^2r^{N-2a-1}dr&+\int_0^\infty|f_{01}'|^2r^{N-2b-1}dr-(N+4a+2)\int_0^\infty|f_{01}'|^2r^{N-1}dr\notag\\
&=\int_0^\infty\Big|f_{01}''-\frac{1+2a}{r}f_{01}'+r^{1+2a}f_{01}'\Big|^2r^{N-2a-1}dr ,
\end{align}
Now write $f_{01}'(r)=r^{1+2a}e^{-r^{2+2a}/(2+2a)}v(r)$. Then
\[
f_{01}''=\frac{1+2a}{r}f_{01}'-r^{1+2a}f_{01}'+r^{1+2a}e^{-\frac{r^{2+2a}}{2+2a}}v'
\]
(the derivative of $r^{1+2a}$ produces the first term, that of the exponential the second), so that the square in \eqref{radsq} is exactly $|r^{1+2a}e^{-r^{2+2a}/(2+2a)}v'|^2$, and \eqref{radsq} becomes
\[
\int_0^\infty|v'|^2r^{N+2a+1}e^{-\frac{r^{2+2a}}{1+a}}dr .
\]
This is the left-hand side of \eqref{GP2a} with $\beta=N$ (note $N+4a+2>0$ as $N\ge2$, $a>-1$). Hence, by Corollary \ref{cor:GP}(ii),
\begin{align*}
\int_0^\infty|v'|^2r^{N+2a+1}e^{-\frac{r^{2+2a}}{1+a}}dr&\ge4(1+a)\inf_{c\in\R}\int_0^\infty|v-c|^2r^{N+4a+1}e^{-\frac{r^{2+2a}}{1+a}}dr\\
&=4(1+a)\inf_{c\in\R}\int_0^\infty\Big|f_{01}'-c\,r^{1+2a}e^{-\frac{r^{2+2a}}{2+2a}}\Big|^2r^{N-1}dr\\
&=4(1+a)\inf_{c\in\R}\int_0^\infty\big|f_{01}'-c\,\varphi_0'\big|^2r^{N-1}dr ,
\end{align*}
since $\varphi_0'(r)=-r^{1+2a}e^{-r^{2+2a}/(2+2a)}$. Using also the second-level term of Corollary \ref{cor:GP}, and observing that $r^{1+2a}e^{-\frac{r^{2+2a}}{2+2a}}(c_0+c_1r^{2+2a})$ is exactly the derivative of $(c_0'+c_1'r^{2+2a})\varphi_0$ for suitable $(c_0',c_1')$ (a two-dimensional space in both cases), we have proved
\begin{equation}\label{radial}
\text{radial part of \eqref{decomp}}\ \ge\ 4(1+a)\,d_0^2+4(1+a)\,d_0'^2,
\end{equation}
where
\[
d_0^2:=\inf_{c}\int_0^\infty|f_{01}'-c\varphi_0'|^2r^{N-1}dr,\qquad d_0'^2:=\inf_{c_0,c_1}\int_0^\infty\big|f_{01}'-\big((c_0+c_1r^{2+2a})\varphi_0\big)'\big|^2r^{N-1}dr\le d_0^2 .
\] Both constants are sharp, being the first two levels of Theorem \ref{TA}: equality holds in \eqref{radial} for $v=a_0+a_1r^{2+2a}$ (first term only) and for $v=a_0+a_1r^{2+2a}+a_2r^{4+4a}$ (both terms).

\subsection{The modes $k\ge1$}
Fix $k\ge1$ and write $f_{kl}(r)=r^kg(r)$, $g(r)=h(s)$ with $s=\frac{r^{2+2a}}{2+2a}$; note that $\gamma(m+\gamma-2)=k(k+N-2)=c_k$ for $\gamma=k$, $m=N$, which is why the power $r^k$ is the right one for the mode $k$. Applying \eqref{keyiden} with $m=N$, $\gamma=k$ and $1+a+b=0$, so that $\kappa=2+2a$ and the third integral on its right-hand side disappears, we get, with $\theta=\frac{N+2k+a-3b-1}{2+2a}=\frac{N+2k+4a+2}{2+2a}$ and
\begin{equation}\label{mueps}
\mu:=\theta-1=\frac{N+2k+2a}{2+2a},\qquad \epsilon:=\frac{k}{2+2a},
\end{equation}
the identity
\begin{align*}
\int_0^\infty\Big|f_{kl}''+\frac{N-1}rf_{kl}'-c_k\frac{f_{kl}}{r^2}\Big|^2r^{N-2a-1}dr&+\int_0^\infty\Big(|f_{kl}'|^2+c_k\frac{|f_{kl}|^2}{r^2}\Big)r^{N-2b-1}dr\\
&=(2+2a)^{\theta}\big(Q[h]+P_1[h]\big),
\end{align*}
where in the $|h|^2$-term we used $\frac{2kb}{(2+2a)^2}=-\frac{k}{2+2a}=-\epsilon$. Moreover, by \eqref{iden2} with $b=0$ and the change of variable,
\[
\int_0^\infty\Big(|f_{kl}'|^2+c_k\frac{|f_{kl}|^2}{r^2}\Big)r^{N-1}dr=\int_0^\infty|g'|^2r^{N+2k-1}dr=(2+2a)^{\theta-1}P_0[h].
\]
Written out with the explicit exponents, the two identities read
\begin{align*}
&\int_0^\infty\Big|f_{kl}''+\frac{N-1}rf_{kl}'-c_k\frac{f_{kl}}{r^2}\Big|^2r^{N-2a-1}dr+\int_0^\infty\Big(|f_{kl}'|^2+c_k\frac{|f_{kl}|^2}{r^2}\Big)r^{N-2b-1}dr\\
&\quad=(2+2a)^{\frac{N+2k+4a+2}{2+2a}}\Big[\int_0^\infty|h''|^2s^{\frac{N+2k+2a}{2+2a}+1}ds+\int_0^\infty|h'|^2s^{\frac{N+2k+2a}{2+2a}+1}ds\\
&\qquad\qquad\qquad\qquad\qquad-\frac{k}{2+2a}\int_0^\infty|h|^2s^{\frac{N+2k+2a}{2+2a}-1}ds\Big],\\
&\int_0^\infty\Big(|f_{kl}'|^2+c_k\frac{|f_{kl}|^2}{r^2}\Big)r^{N-1}dr=\int_0^\infty|g'|^2r^{N+2k-1}dr\\
&\quad=(2+2a)^{\frac{N+2k+2a}{2+2a}}\int_0^\infty|h'|^2s^{\frac{N+2k+2a}{2+2a}}ds .
\end{align*}
Note that $\mu^2-4\epsilon=\frac{(N+2k+2a)^2-8k(1+a)}{(2+2a)^2}>0$. Therefore, by Theorem \ref{thm:1d} with $\mu=\frac{N+2k+2a}{2+2a}$ and $\epsilon=\frac{k}{2+2a}$, i.e.\ by Hamamoto's inequality
\begin{align*}
&\int_0^\infty|h''|^2s^{\mu+1}ds+\int_0^\infty|h'|^2s^{\mu+1}ds-\frac{k}{2+2a}\int_0^\infty|h|^2s^{\mu-1}ds\\
&\qquad\ge\ \Big(\sqrt{\Big(\frac{N+2k+2a}{2+2a}\Big)^2-\frac{2k}{1+a}}+1\Big)\int_0^\infty|h'|^2s^{\mu}ds ,
\end{align*}
and using $(2+2a)\sqrt{\big(\frac{N+2k+2a}{2+2a}\big)^2-\frac{2k}{1+a}}=\sqrt{(N+2k+2a)^2-8k(1+a)}$, we get
\begin{align*}
&\int_0^\infty\Big|f_{kl}''+\frac{N-1}rf_{kl}'-c_k\frac{f_{kl}}{r^2}\Big|^2r^{N-2a-1}dr+\int_0^\infty\Big(|f_{kl}'|^2+c_k\frac{|f_{kl}|^2}{r^2}\Big)r^{N-2b-1}dr\\
&\qquad\ge(2+2a)\Big(1+\sqrt{\mu^2-4\epsilon}\Big)\int_0^\infty\Big(|f_{kl}'|^2+c_k\frac{|f_{kl}|^2}{r^2}\Big)r^{N-1}dr\\
&\qquad=\Big(\sqrt{(N+2k+2a)^2-8k(1+a)}+2+2a\Big)\int_0^\infty\Big(|f_{kl}'|^2+c_k\frac{|f_{kl}|^2}{r^2}\Big)r^{N-1}dr ,
\end{align*}
and consequently, for the $(k,l)$-term of \eqref{decomp},
\begin{align}\label{modek}
\text{$(k,l)$-term of \eqref{decomp}}\ &\ge\ C(N,a,k)\int_0^\infty\Big(|f_{kl}'|^2+c_k\frac{|f_{kl}|^2}{r^2}\Big)r^{N-1}dr\notag\\
&=C(N,a,k)\int_{\R^N}|\nabla(f_{kl}\phi_{kl})|^2dx ,
\end{align}
where
\begin{align*}
C(N,a,k)&:=\sqrt{(N+2k+2a)^2-8k(1+a)}-(N+2a)\\
&=\sqrt{(N+2a)^2+4k^2+4k(N-2)}-(N+2a) ,
\end{align*}
i.e.\ $C(N,a,k)=C_k(N,a)$ as defined in \eqref{C1}, which is increasing in $k$.

For $k=1$ we can do better. By the remainder term in Theorem \ref{thm:1d}, applied with the parameters \eqref{mueps} for $k=1$ (so that $\mu=\mu_1$, $\epsilon=\epsilon_1$ and $b=b_1$ are the quantities in \eqref{h1}, and $h_1=f_0$ in the notation \eqref{bf0}),
\[
Q[h]+P_1[h]-\Big(1+\sqrt{\mu_1^2-4\epsilon_1}\Big)P_0[h]\ \ge\ 2\inf_{c\in\R}P_0[h-c\,h_1] ,
\]
and since the identity $\int_0^\infty(|f_{1l}'|^2+c_1\frac{|f_{1l}|^2}{r^2})r^{N-1}dr=(2+2a)^{\theta-1}P_0[h]$ is quadratic in the profile, $(2+2a)^{\theta-1}P_0[h-ch_1]=\int_{\R^N}|\nabla(f_{1l}\phi_{1l})-c\,\nabla(r\,h_1(s)\phi_{1l})|^2dx$. Therefore, in addition to \eqref{modek},
\begin{align}\label{mode1}
&\text{$(1,l)$-term of \eqref{decomp}}\notag\\ \ge\ &C_1(N,a)\int_{\R^N}|\nabla(f_{1l}\phi_{1l})|^2dx\notag\\ &+4(1+a)\inf_{c\in\R}\int_{\R^N}\big|\nabla(f_{1l}\phi_{1l})-c\,\nabla(r\,h_1(s)\phi_{1l})\big|^2dx ,
\end{align}
where $4(1+a)=2(2+2a)$ comes from $(2+2a)^{\theta}\cdot2=2(2+2a)\cdot(2+2a)^{\theta-1}$.

\subsection{Conclusion}
Write $d_1^2:=\inf_{\phi\in\cH_1}\int|\nabla f_{(1)}-\nabla(r h_1(s)\phi)|^2$ for the degree-one part $f_{(1)}=\sum_lf_{1l}\phi_{1l}$ of $f$, and $\|\nabla f_{(k)}\|^2:=\int|\nabla f_{(k)}|^2$ for the degree-$k$ parts. By the orthogonality described after \eqref{decomp},
\[
\dist(\nabla f,\mathcal G_1)^2=d_0^2+\sum_{k\ge1}\|\nabla f_{(k)}\|^2,\qquad
\dist(\nabla f,\mathcal G_2)^2=d_0'^2+d_1^2+\sum_{k\ge2}\|\nabla f_{(k)}\|^2 ,
\]
and $\delta_{2,a}(f)$ is bounded from below, by \eqref{radial}, \eqref{mode1} and \eqref{modek}, by
\[
4(1+a)d_0^2+4(1+a)d_0'^2+C_1(N,a)\|\nabla f_{(1)}\|^2+4(1+a)d_1^2+\sum_{k\ge2}C_k(N,a)\|\nabla f_{(k)}\|^2 .
\]
We compare block by block with $\Theta_1\dist(\nabla f,\mathcal G_1)^2+\Theta_2\dist(\nabla f,\mathcal G_2)^2$. Radial block: $\Theta_1d_0^2+\Theta_2d_0'^2\le4(1+a)d_0^2+4(1+a)d_0'^2$ because $\Theta_1\le4(1+a)$, $\Theta_1+\Theta_2\le8(1+a)$ and $d_0'\le d_0$. Block $k=1$: $\Theta_1\|\nabla f_{(1)}\|^2+\Theta_2d_1^2\le C_1\|\nabla f_{(1)}\|^2+4(1+a)d_1^2$ because $\Theta_1\le C_1$, $\Theta_1+\Theta_2\le C_1+4(1+a)$ and $d_1\le\|\nabla f_{(1)}\|$. Blocks $k\ge2$: $(\Theta_1+\Theta_2)\|\nabla f_{(k)}\|^2\le C_2\|\nabla f_{(k)}\|^2\le C_k\|\nabla f_{(k)}\|^2$. This proves \eqref{stab2nd}, and \eqref{stab2nd1} follows by dropping the second term.

For the sharpness of $\Theta_1$: if $\Theta_1=4(1+a)$ take $f$ radial with $v=r^{2+2a}$, for which \eqref{radial} is an equality with $d_0'=0$; if $\Theta_1=C_1$ take $f=f_{11}\phi_{11}$ with $h=h_1$, for which \eqref{mode1} is an equality with $d_1=0$. In both cases only the first term of \eqref{stab2nd} is nonzero and equality holds. For the sharpness of $\Theta_2$ we use the test function realizing the minimum in the definition of $\Theta_1+\Theta_2$: a radial $f$ whose $v$ is the Laguerre polynomial of degree two in $r^{2+2a}$, i.e.\ $v=a_0+a_1r^{2+2a}+a_2r^{4+4a}$ orthogonal to the two lower levels (deficit $8(1+a)$ times $\int|\nabla f|^2$, both distances equal to $\int|\nabla f|^2$); or $f=f_{11}\phi_{11}$ with $h=f_1$ the second-level function \eqref{f1} of Theorem \ref{thm:1d} (deficit $(C_1+4(1+a))\int|\nabla f|^2$, both distances equal to $\int|\nabla f|^2$); or $f=f_{21}\phi_{21}$ with $h=f_0$ the extremal of Theorem \ref{thm:1d} for the parameters of $k=2$ (deficit $C_2\int|\nabla f|^2$, both distances equal to $\int|\nabla f|^2$). In each case equality holds in \eqref{stab2nd} with the second term nonzero. This completes the proof of Theorem \ref{thm:2nd}. \qed

\subsection{Proof of Theorem \ref{thm:prod2nd}}
We apply Lemma \ref{lem:transfer} with $p=2+2a$. For \eqref{prod2nd} we use the dilation $f_\lambda(x):=\lambda^{N/2-1}f(\lambda x)$, for which $\nabla f_\lambda=(\nabla f)_\lambda$ in the notation $\bU_\lambda=\lambda^{N/2}\bU(\lambda\,\cdot)$ of Section \ref{sec:prod}, so that $\int|\nabla f_\lambda|^2=\int|\nabla f|^2$, while
\[
\int|\Delta f_\lambda|^2|x|^{-2a}=\lambda^{2+2a}\int|\Delta f|^2|x|^{-2a},\qquad\int|\nabla f_\lambda|^2|x|^{-2b}=\lambda^{-2-2a}\int|\nabla f|^2|x|^{-2b}
\]
(recall $b=-1-a$). The choice $\lambda_*=(B/A)^{1/(4+4a)}$ balances the two integrals, so that $\delta_{2,a}(f_{\lambda_*})=2\delta_{1,a}(f)$, and Theorem \ref{thm:2nd} applied to $f_{\lambda_*}$ gives \eqref{prod2nd}, with the distances to $(\cG_j)_{1/\lambda_*}$ in the stronger form. Note that the dilates of $\nabla\varphi_0$ are the gradients $\nabla\big(\lambda^{N/2-1}\varphi_0(\lambda\,\cdot)\big)$, so $\cG_1^{\rm dil}$ is exactly the set of gradients of the extremals $c\,\varphi_0(\lambda\,\cdot)$ of \eqref{2ndCKN}.

For the sharpness in Theorem \ref{thm:prod2nd}, let $a\ge a_*(N)$, so that $\Theta_1=C_1(N,a)$, and take $f=r\,h_1(s)\phi_1$. This function minimizes the ratio of $\delta_{2,a}$ to the dilation invariant quantity $\int|\nabla f|^2$ over the first spherical mode, which is invariant under dilations; hence it is balanced, i.e.\ $\frac{d}{d\lambda}\delta_{2,a}(f_\lambda)|_{\lambda=1}=0$, and $\delta_{1,a}(f)=\frac12\delta_{2,a}(f)=\frac12C_1(N,a)\int|\nabla f|^2$. Moreover $\nabla f$ is orthogonal to all the radial fields, hence $\dist(\nabla f,\cG_1^{\rm dil})^2=\int|\nabla f|^2$, and equality holds in the first term of \eqref{prod2nd}. \qed

The restriction $a\ge a_*(N)$ in the sharpness statement is not a technical one. For $a<a_*(N)$ the constant $\Theta_1=4(1+a)$ of Theorem \ref{thm:2nd} is attained in the radial mode by the first Laguerre level, and this level is, up to a multiple of $\nabla\varphi_0$, the infinitesimal generator $\frac{d}{d\lambda}\nabla(\lambda^{N/2-1}\varphi_0(\lambda\,\cdot))|_{\lambda=1}$ of the dilations, along which the product deficit $\delta_{1,a}$ vanishes. A second order expansion of $\delta_{1,a}$ at $\varphi_0$ shows that the first Laguerre level does not contribute to the product deficit near the set of extremals, so that the local stability constant of $\delta_{1,a}$ near $\cG_1^{\rm dil}$ is $\frac12\min\{8(1+a),C_1(N,a)\}$. We do not know the sharp global constant of \eqref{prod2nd} for $a<a_*(N)$; the estimate \eqref{prod2nd} holds with the constant $\frac12\Theta_1=2(1+a)$.

At $a=0$, \eqref{2ndCKN} is the second order HUP, $\varphi_0(x)=e^{-|x|^2/2}$, $\mathcal G_1$ is spanned by $\nabla e^{-|x|^2/2}=-xe^{-|x|^2/2}$, and Theorems \ref{thm:prod2nd} and \ref{thm:2nd} give the sharp stability of the second order HUP with the constants $\Theta_1=\sqrt{N^2+4N-4}-N$ (at a fixed scale) and $\frac12\Theta_1$ (in product form), attained in the first spherical mode: the extremizing test functions are $f=r\,h_1(r^2/2)\phi_1(\sigma)$ with $h_1=e^{-s}{}_1F_1(b_1,\frac{N+2}2,s)$, $b_1=\frac{N+2-\sqrt{N^2+4N-4}}{4}$. In the language of curl-free fields, this is the sharp stability of the curl-free HUP of \cite{CFL22}, for $\bU=\nabla f$; the reader may compare with the results of \cite{DoLL26, DN25, HY25}, obtained by different methods. The second term of the chain is governed by $\Theta_2=\min\{8,\sqrt{N^2+4N-4}-N+4,\sqrt{N^2+8N}-N\}-\Theta_1$, and one checks that the minimum is always the third competitor, i.e.\ $\Theta_1+\Theta_2=C_2(N,0)=\sqrt{N^2+8N}-N$, the constant of the second spherical mode.

\medskip
\noindent\textbf{Acknowledgements.} A. X. Do and G. Lu were partially supported by grants from the Simons Foundation. A. T. Duong and V. H. Nguyen were supported by the Vietnam National Foundation for Science and Technology Development (NAFOSTED) under grant number 101.02-2025.33. N. Lam was partially supported by an NSERC Discovery Grant.

\end{document}